%
%
%
\documentclass[11pt]{amsart}

\usepackage[colorlinks=true, pdfstartview=FitV, linkcolor=blue, citecolor=blue, urlcolor=blue, breaklinks=true]{hyperref}

\usepackage{amsmath,amsfonts,amssymb,amsthm,epsfig,comment}
\usepackage[all]{xy}
\usepackage[usenames]{color}  

\addtolength{\hoffset}{-1.5cm} \addtolength{\textwidth}{3cm}

\newtheorem{Theorem}{Theorem}[section]
\newtheorem{Proposition}[Theorem]{Proposition}
\newtheorem{Lemma}[Theorem]{Lemma}
\newtheorem{Corollary}[Theorem]{Corollary}

\theoremstyle{definition}
\newtheorem{Definition}[Theorem]{Definition}

\newtheorem{Remark}[Theorem]{Remark}
\newtheorem{Example}[Theorem]{Example}
\newtheorem{PropDef}[Theorem]{Proposition/Definition}
\newtheorem{Error}[Theorem]{Error}

\newcommand{\arxiv}[1]{\href{http://arxiv.org/abs/#1}{\tt arXiv:\nolinkurl{#1}}}

\newcommand{\g}{\mathfrak{g}}

\newcommand{\GL}{\mathrm{GL}}

\DeclareMathOperator{\Hom}{Hom}
\DeclareMathOperator{\End}{End}
\DeclareMathOperator{\Rep}{Rep}

\newcommand{\Z}{\mathbb{Z}}
\newcommand{\st}{\mathrm{st}}
\newcommand{\N}{\mathbb{N}}

\newcommand{\C}{\mathbb{C}}
\newcommand{\Gr}{\mathrm{Gr}}
\newcommand{\NGr}{\mathrm{NGr}}
\DeclareMathOperator{\im}{im}

\DeclareMathOperator{\Id}{Id}

\DeclareMathOperator{\socle}{socle}
\DeclareMathOperator{\Aut}{Aut}
\DeclareMathOperator{\cha}{char}
\DeclareMathOperator{\Ext}{Ext}
\newcommand{\Q}{\mathbb{Q}}

\newcommand{\cP}{\mathcal{P}}

\newcommand{\id}{\mathrm{id}}

\numberwithin{equation}{section}



%
%

\begin{document}

\title{Quiver grassmannians, quiver varieties and the preprojective algebra}

\author{Alistair Savage}
\address{A.~Savage: University of Ottawa, Ottawa, ON, Canada}
\email{alistair.savage@uottawa.ca}
\author{Peter Tingley}
\address{P.~Tingley: Massachusetts Institute of Technology, Cambridge, MA, USA}
\email{ptingley@math.mit.edu}

\thanks{The research of the first author was supported by the Natural
Sciences and Engineering Research Council of Canada.  The second
author was supported by Australian Research Council grant DP0879951
and NSF grant DMS-0902649.}

\begin{abstract}
Quivers play an important role in the representation theory of
algebras, with a key ingredient being the path algebra and the
preprojective algebra. Quiver grassmannians are varieties of
submodules of a fixed module of the path or preprojective algebra.
In the current paper, we study these objects in detail. We show that
the quiver grassmannians corresponding to nilpotent submodules of certain
injective modules are homeomorphic to the lagrangian quiver
varieties of Nakajima which have been well studied in the context of
geometric representation theory.  We then refine this result by
finding quiver grassmannians which are homeomorphic to the Demazure
quiver varieties introduced by the first author, and others which
are homeomorphic to the graded/cyclic quiver varieties defined by
Nakajima.  The Demazure quiver grassmannians allow us to describe
injective objects in the category of locally nilpotent modules of
the preprojective algebra. We conclude by relating our construction
to a similar one of Lusztig using projectives in place of
injectives.  In an appendix added after the first version of the
current paper was released, we show how subsequent results of
Shipman imply that the above homeomorphisms are in fact isomorphisms
of algebraic varieties.
\end{abstract}

\subjclass[2010]{Primary: 16G20, Secondary: 17B10}
\maketitle \thispagestyle{empty}
\setcounter{tocdepth}{1} 
\vspace{-1.01cm}
\tableofcontents

\vspace{-1cm}

\textbf{Note:} This version of the paper incorporates an erratum to the published version.  See Appendix~\ref{sec:erratum} for details.


\section*{Introduction}

Quivers play a fundamental role in the theory of associative
algebras and their representations. Gabriel's theorem, which states
a precise relationship between indecomposable representations of
certain quivers and root systems of associated Lie algebras,
indicated that the representation theory of quivers was also
intimately connected to the representation theory of Kac-Moody
algebras.  This eventually lead to the Ringel-Hall construction of
quantum groups and the quiver variety constructions of Lusztig and
Nakajima.

Fix a quiver (directed graph) $Q=(Q_0,Q_1)$ with vertex set $Q_0$
and arrow set $Q_1$.  The corresponding path algebra $\C Q$ is the
algebra spanned by the set of directed paths, with multiplication
given by concatenation. There is a natural grading $\C Q =
\bigoplus_n (\C Q)_n$ of the path algebra by length of paths.
Representations of a quiver are equivalent to representations (or
modules) of its path algebra.  Note that $(\C Q)_0$-modules are
simply $Q_0$-graded vector spaces, and in particular all $\C
Q$-modules are $Q_0$-graded. For a $\C Q$-module $V$ and $u \in \N
Q_0$, the associated \emph{quiver grassmannian} is the set
$\Gr_Q(u,V)$ of all $\C Q$-submodules  of $V$ of graded dimension
$u$. The {\it nilpotent quiver grassmannian} is the subset $\NGr_Q(u,V)$ of $\Gr_Q(u,V)$ consisting of nilpotent modules. These natural objects (or closely related ones) can be found in
several places in the literature. For instance, they appear in
\cite{Cra96,Sch92} in the study of spaces of morphisms of $\C
Q$-modules and in \cite{CC06,CK06,DWZ09} in connection with the
theory of cluster algebras.  Geometric properties have been studied
in \cite{CR08,Sza09,Wol09} and representation theoretic properties
in \cite{Fed10,GLS06,Lus98,Lus00b,Nak03b,Rei08}.

Let $\g$ be the Kac-Moody algebra whose Dynkin diagram is the
underlying graph of $Q$ (the graph obtained by forgetting the
orientation of all arrows) and let $\tilde Q$ be the \emph{double
quiver} obtained from $Q$ by adding an oppositely oriented arrow
$\bar a$ for every $a \in Q_1$. One is often interested in modules
of the preprojective algebra $\cP=\cP(Q)$, which is a certain
natural quotient of the path algebra $\C \tilde Q$ and inherits the
grading.  In particular, $\cP$-modules are also $\C \tilde
Q$-modules. To each vertex $i \in Q_0$, we have an associated
one-dimensional simple $\cP$-module $s^i$.  For $w  = \sum_i w_i i
\in \N Q_0$, we let $s^w = \bigoplus_i (s^i)^{\oplus w_i}$ be the
corresponding semisimple module. By Baer's Theorem,
the category of $\cP$-modules has enough injectives, so we can
define $q^w$ to be the injective hull of $s^w$. One of the main
results of the current paper is that the nilpotent quiver grassmannian
$\NGr_{\tilde Q}(v,q^w)$ is homeomorphic to the lagrangian Nakajima
quiver variety $\mathfrak{L}(v,w)$ used to give a geometric
realization of irreducible highest weight representations of $\g$
(see \cite{Nak94,Nak98}).  In addition, for each $\sigma$ in the
Weyl group of $\g$, there is a natural finite-dimensional submodule
$q^{w,\sigma}$ of $q^w$ such that the quiver grassmannian
$\Gr_{\tilde Q}(v,q^{w,\sigma})$ is homeomorphic to the Demazure
quiver variety $\mathfrak{L}_\sigma(v,w)$ defined by the first
author \cite{Sav04a}.  Since Nakajima's realization of highest
weight representations and the first author's realization of
Demazure modules depend only on the topological information of the
spaces involved, such homeomorphisms allow one to replace quiver
varieties by quiver grassmannians in the constructions.  This change
of setting affords some advantages.  In particular, it avoids the
description as a moduli space.  One can view it as a uniform way of
picking a representative from each orbit in the original moduli
space descriptions.

Quiver grassmannians admit natural group actions.  We describe these
actions and show that certain special cases agree, under the
homeomorphisms described above, with well-studied groups actions on
Nakajima quiver varieties.  In this way, we are able to give a
quiver grassmannian realization of the cyclic/graded quiver
varieties used by Nakajima to define $t$-analogs of $q$-characters
of quantum affine algebras \cite{Nak04}.

The injective modules $q^w$ are locally nilpotent if and only if the
quiver $Q$ is of finite or affine type.  However,
the submodules $q^{w, \sigma}$ are always nilpotent.  The limit
$\tilde q^w$ of these submodules is the injective hull of the
semisimple module $s^w$ in the category of locally nilpotent
$\cP$-modules, giving us a description of the indecomposable
injectives in this category. Furthermore, $\NGr(v, q^w)$ is naturally identified with $\Gr(u, \tilde q^w)$, so we also obtain that this ordinary quiver variety is homeomorphic to (and in fact isomorphic to, using the results in the appendix) $\mathfrak{L}(v,w)$.

Lusztig has previously presented a canonical bijection between the
points of the lagrangian Nakajima quiver variety and the points of a
type of quiver grassmannian inside a projective (as opposed to
injective) object.  In finite type, the projective objects are also
injective. It turns out that, on the level of geometric realizations
of representations of finite type $\g$, the two constructions are
related by the Chevalley involution.  Outside of finite type, there
are some other subtle yet important differences between the two
constructions.  In particular, through the use of the distinguished
modules $q^{w,\sigma}$ mentioned above, one can always consider
quiver grassmannians of submodules of a fixed
\emph{finite-dimensional} module of the preprojective algebra. Thus,
one can avoid working with infinite-dimensional objects.

Motivated by an earlier version of the current paper \cite{ST09},
I.~Shipman~\cite{Shi10} has recently proven that the canonical
bijection given by Lusztig and mentioned above is, in fact, an
isomorphism of algebraic varieties.  We have added an appendix
explaining how this result allows us to conclude that the maps
between quiver grassmannians and lagrangian Nakajima quiver
varieties described in the current paper are also isomorphisms of
algebraic varieties.

Throughout the current paper, we work over the field $\C$ of complex
numbers.  While many results hold in more generality, this
assumption will streamline the exposition and several results we
quote in the literature are stated over $\C$. We will always use the
Zariski topology and do not assume that algebraic varieties are
irreducible. We let $\N = \Z_{\ge 0}$ and denote the fundamental
weights and simple roots of a Kac-Moody algebra by $\omega_i$ and
$\alpha_i$ respectively.

This paper is organized as follows.  In
Section~\ref{sec:quivers-overview} we review some results on
quivers, path algebras and preprojective algebras.  In
Section~\ref{sec:modules} we discuss various module categories of
these objects and introduce our main object of study, the quiver
grassmannian.  We review the definition of the quiver varieties of
Lusztig and Nakajima in Section~\ref{sec:QVs} and realize these as
quiver grassmannians in Section~\ref{sec:QVs-and-QGs}.  In
Section~\ref{sec:actions-and-graded-versions} we introduce a natural
group action and show how it can be used to recover group actions
typically constructed on quiver varieties.  We also define
graded/cyclic versions of quiver grassmannians. In
Section~\ref{sec:geom-reps} we use quiver grassmannians to give a
geometric realization of integrable highest weight representations
of a symmetric Kac-Moody algebra and discuss the compatibility of
this construction with the natural nesting of quiver grassmannians.
Finally, in Section~\ref{sec:injective-versus-projective} we discuss
a precise relationship between our construction and a similar one
due to Lusztig.  Appendix~\ref{Shipman-Appendix}, added after the
appearance of \cite{Shi10}, provides a proof that the maps between
quiver grassmannians and quiver varieties described in the current
paper are isomorphisms of algebraic varieties.


\section{Quivers, path algebras, and preprojective algebras}
\label{sec:quivers-overview}

In this section we briefly review the relevant definitions
concerning quivers.  We refer the reader to
\cite{DDPW08,Rin98,Sav06a} for further details.

A \emph{quiver} is a directed graph.  That is, it is a quadruple $Q
= (Q_0,Q_1,s,t)$ where $Q_0$ and $Q_1$ are sets and $s$ and $t$ are
maps from $Q_1$ to $Q_0$.  We call $Q_0$ and $Q_1$ the sets of
\emph{vertices} and \emph{directed edges} (or \emph{arrows})
respectively. For an arrow $a \in Q_1$, we call $s(a)$ the
\emph{source} of $a$ and $t(a)$ the \emph{target} of $a$.  Usually
we will write $Q=(Q_0,Q_1)$, leaving the maps $s$ and $t$ implied.
The quiver $Q$ is said to be \emph{finite} if $Q_0$ and $Q_1$ are
finite.  A \emph{loop} is an arrow $a$ with $s(a)=t(a)$. In this
paper, all quivers will be assumed to be finite and without loops. A
quiver is said to be of \emph{finite type} if the underlying graph
of $Q$ (i.e the graph obtained from $Q$ by forgetting the orientation
of the edges) is a Dynkin diagram of finite $ADE$ type. Similarly,
it is of \emph{affine} (or \emph{tame}) \emph{type} if the
underlying graph is a Dynkin diagram of affine type and of
\emph{indefinite} (or \emph{wild}) \emph{type} if the underlying
graph is a Dynkin diagram of indefinite type.

A \emph{path} in $Q$ is a sequence $\beta = a_l a_{l-1} \cdots a_1$
of arrows such that $t(a_i) = s(a_{i+1})$ for $1 \le i \le l-1$.  We
call $l$ the \emph{length} of the path. We let $s(\beta) = s(a_1)$
and $t(\beta) = t(a_l)$ denote the initial and final vertices of the
path $\beta$.  For each vertex $i \in I$, we have a trivial path
$e_i$ with $s(e_i) = t(e_i)=i$.

The \emph{path algebra} $\C Q$ associated to a quiver $Q$ is the
$\C$-algebra whose underlying vector space has basis the set of
paths in $Q$, and with the product of paths given by concatenation.
More precisely, if $\beta=a_l \cdots a_1$ and $\beta' = b_m \cdots
b_1$ are two paths in $Q$, then $\beta \beta'=a_l \cdots a_1 b_m
\cdots b_1$ if $t(\beta')=s(\beta)$ and $\beta \beta'=0$ otherwise.
This multiplication is associative. There is a natural grading $\C Q
= \bigoplus_{n \ge 0} (\C Q)_n$ where $(\C Q)_n$ is the span of the
paths of length $n$.

Given a quiver $Q=(Q_0,Q_1)$, we define the \emph{double quiver}
associated to $Q$ to be the quiver $\tilde Q=(Q_0, \tilde Q_1)$
where
\[
    \tilde Q_1 = \bigcup_{a \in Q_1} \{a, \bar a\},\quad \text{where} \quad
    s(\bar a)=t(a),\ t(\bar a)=s(a).
\]
We then have a natural involution $\tilde Q_1 \to \tilde Q_1$ given
by $a \mapsto \bar a$ (where ${\bar {\bar a}}=a$).  The algebra
\[
  \mathcal{P}= \mathcal{P}(Q) = \C \tilde Q/ \sum_{a \in Q_1}
  (a \bar a - \bar a a)
\]
is called the \emph{preprojective algebra} associated to $Q$.  It
inherits a grading $\mathcal{P} = \bigoplus_{n \ge 0} \mathcal{P}_n$
from the grading on $\C Q$.  Up to isomorphism, the preprojective
algebra $\cP(Q)$ depends only on the underlying graph of $Q$. See
\cite[\S 12.15]{L91} for details.


\section{Modules of the path algebra and quiver grassmannians}
\label{sec:modules}

\subsection{Module categories}

For an associative algebra $A$, let $A$-Mod denote the category of
$A$-modules and $A$-mod the category of finite-dimensional
$A$-modules.  We will use the notation $V \in A$-Mod (resp. $V \in
A$-mod) to indicate that $V$ is an object in the category $A$-Mod
(resp. $A$-mod). Note that $\cP_0$-mod is equivalent to the category
of finite-dimensional $Q_0$-graded vector spaces whose morphisms are
linear maps preserving the grading, and we will often blur the
distinction between these two categories. Up to isomorphism, the
objects of $\cP_0$-mod are classified by their graded dimension.  We
denote the graded dimension of a module $V$ by $\dim_{Q_0} V =
\sum_i (\dim V_i) i \in \N Q_0$ and let $\dim_\C V = \sum_{i \in
Q_0} \dim V_i \in \N$.  We will sometimes view the graded dimension
$\dim_{Q_0} V$ of $V$ as its isomorphism class.

For $V, W \in \cP_0$-mod, we denote the set of $\cP_0$-module
morphisms from $V$ to $W$ by $\Hom_{\cP_0} (V,W)$. Under the
equivalence of categories above, $\Hom_{\cP_0}(V,W)$ is identified
with $\bigoplus_{i \in Q_0} \Hom_\C (V_i, W_i)$. We define
$\End_{\cP_0} V$ to be $\Hom_{\cP_0} (V,V)$ and $\GL_V =
\prod_{i \in Q_0} GL(V_i)$ to be group of invertible elements of
$\End_{\cP_0} V$. For $V \in \cP_0$-mod, we will write $U \subseteq
V$ to mean that $U$ is a $\cP_0$-submodule of $V$.  This is the
same as a $Q_0$-graded subspace.  Note that any $\cP$-module becomes
a $\cP_0$-module by restriction, and thus can be thought of as a
$Q_0$-graded vector space.

Suppose $A = \bigoplus_{n \ge 0} A_n$ is a graded algebra and $V$ is
an $A$-module.  Then $V$ is \emph{nilpotent} if there exists an $n
\in \N$ such that $A_k \cdot V=0$ for all $k \ge n$.  We say $V$ is \emph{locally
nilpotent} if for all $v \in V$, there exists $n \in \N$ such that
$A_k \cdot v = 0$ for all $k \ge n$.  We denote by $A$-lnMod the category of locally
nilpotent $A$-modules.  For $n \ge 0$, we define $A_{\ge n} =
\bigoplus_{k \ge n} A_k$ and we let $A_+ = A_{\ge 1}$.

\begin{Proposition} \label{prop:path-algebra-type}
For a quiver $Q$, the following are equivalent:
\begin{enumerate}
  \item \label{prop-item:PQ-fd} $\cP(Q)$ is finite-dimensional,
  \item \label{prop-item:PQ-nilpotent} all finite-dimensional $\cP(Q)$-modules are nilpotent,
  \item \label{prop-item:PQ-locally-nilpotent} all finite-dimensional
  $\cP(Q)$-modules are locally nilpotent, and
  \item \label{prop-item:Q-dynkin} $Q$ is of finite type.
\end{enumerate}
\end{Proposition}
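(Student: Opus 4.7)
The plan is to close a cycle of implications; the equivalences among (i)--(iii) are essentially formal, and the substantive content is the equivalence with (iv). For (i) $\Rightarrow$ (ii): if $\cP(Q)$ is finite-dimensional then by the grading $\cP_n = 0$ for all $n$ past some threshold, so every module $V$ satisfies $\cP_n \cdot V = 0$. The implication (ii) $\Rightarrow$ (iii) is immediate from the definitions.

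For (iii) $\Rightarrow$ (ii), I would use finite-dimensionality of $V$ directly. Choose a basis $v_1, \ldots, v_k$ of $V$ and integers $n_j$ with $\cP_{n_j} \cdot v_j = 0$. Since every path of length $m \ge n_j$ factors as the product of a path of length $m - n_j$ with a path of length $n_j$, we obtain $\cP_m \cdot v_j = 0$ for all $m \ge n_j$; taking $n = \max_j n_j$ yields $\cP_n \cdot V = 0$.

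For (iv) $\Rightarrow$ (i), I would cite the classical theorem (see \cite[\S 14]{L91}) that the preprojective algebra of a Dynkin quiver is finite-dimensional, with graded dimensions given by an explicit formula. The main obstacle is the converse (iii) $\Rightarrow$ (iv), which I would prove contrapositively by constructing, for each non-Dynkin $Q$, a finite-dimensional $\cP(Q)$-module that is not locally nilpotent. When the underlying graph of $Q$ is not a tree (which includes all affine types $\tilde A_n$), a one-dimensional module often suffices: assign scalars $c_a$ and $d_a$ to $a$ and $\bar a$ for each $a \in Q_1$ so that the preprojective relations at each vertex reduce to the Kirchhoff-type balance $\sum_{t(a) = i} c_a d_a = \sum_{s(a) = i} c_a d_a$, and use the fact that such a system admits a non-trivial solution precisely when the graph is not a tree. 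A suitable choice produces a module in which arbitrarily long paths act non-trivially.

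For the remaining tree-like non-Dynkin types --- $\tilde D_n$, $\tilde E_n$, and the indefinite tree types --- every one-dimensional module is forced to be nilpotent by the vanishing of the flow balance on a tree, so higher-dimensional modules are needed. For these cases I would invoke Kac's theorem (or Crawley-Boevey's existence results for the preprojective algebra) to produce indecomposable representations of imaginary or wild dimension vectors, and verify by direct computation that suitable such examples extend to non-nilpotent $\cP(Q)$-modules. This case analysis, particularly for the indefinite tree types where no elementary construction is available, is the subtlest step of the argument.
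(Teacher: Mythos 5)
Your handling of the easy implications is sound: (i)$\Rightarrow$(ii)$\Rightarrow$(iii) and the factoring argument for (iii)$\Rightarrow$(ii) are correct, and (iv)$\Rightarrow$(i) is a standard citation. Your construction for non-tree underlying graphs also works (one quibble: you call it a ``one-dimensional module,'' but a literally one-dimensional $\cP$-module is some $s^i$ and is nilpotent; you mean a module with a one-dimensional space at each vertex). Indeed a nonzero flow forces $c_a d_a \neq 0$ on some edge $a$, and then $(x_{\bar a}x_a)^N$ acts by $(d_ac_a)^N \neq 0$ for all $N$, so that module is not nilpotent.

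The genuine gap is the tree case of (iii)$\Rightarrow$(iv) --- $\tilde D_n$, $\tilde E_n$ and all wild trees --- which you flag as the subtlest step but for which you give no argument. The route you sketch cannot work as stated: Kac's theorem produces indecomposable representations of the path algebra $\C Q$, and when the underlying graph is a tree the quiver is acyclic, so \emph{every} finite-dimensional $\C Q$-module is nilpotent (paths have length less than $|Q_0|$); any non-nilpotency of an ``extension'' to $\cP(Q)$ must therefore come entirely from a nontrivial action of the reversed arrows compatible with the preprojective relations, and no recipe or computation for this is offered --- this is exactly where the whole difficulty of the implication lives. What closes this case is Crawley-Boevey's theorem \cite[Theorem~1.2]{Cra01}: for non-Dynkin $Q$ there is a simple $\cP(Q)$-module $T$ whose dimension vector is a (minimal) positive imaginary root; since the only nilpotent simple $\cP$-modules are the one-dimensional $s^i$ (Lemma~\ref{lem:simples}, because $\cP_+$ kills any nilpotent simple), $T$ is finite-dimensional and not (locally) nilpotent. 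This is in fact how the paper proves the proposition, entirely by citation: (i)$\Leftrightarrow$(iv) is classical, (ii)$\Rightarrow$(iv) is Crawley-Boevey, (iv)$\Rightarrow$(ii) is Lusztig \cite[Proposition~14.2]{L91}, and (ii)$\Leftrightarrow$(iii) since finite-dimensional modules are nilpotent iff locally nilpotent. Your cycle of implications is a legitimate alternative architecture, but you must replace the ``verify by direct computation'' step for tree-shaped non-Dynkin types by this citation or an actual construction.
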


\begin{proof}
The equivalence of \eqref{prop-item:PQ-fd} and
\eqref{prop-item:Q-dynkin} is well-known (see for example
\cite{Rei97}).  That \eqref{prop-item:PQ-nilpotent} implies
\eqref{prop-item:Q-dynkin} was proven by Crawley-Boevey \cite{Cra01}
and the converse was proven by Lusztig \cite[Proposition~14.2]{L91}.
Since a finite-dimensional module is nilpotent if and only if it is
locally nilpotent, \eqref{prop-item:PQ-nilpotent} is equivalent to
\eqref{prop-item:PQ-locally-nilpotent}.
\end{proof}

\subsection{Simple objects}

For each $i \in Q_0$, let $s^i$ be the simple $\C \tilde Q$-module
given by $s^i_i=\C$ and $s^i_j=0$ for $i \ne j$.  Then $s^i$ is also
naturally a $\mathcal{P}$-module which we also denote by $s^i$.

\begin{Lemma} \label{lem:simples}
  The set $\{s^i\}_{i \in Q_0}$ is a set of representatives of the isomorphism classes of
  simple objects of $\C \tilde Q$-lnMod and $\cP$-lnMod.  In particular, if
  $Q$ is of finite type, then $\{s^i\}_{i \in Q_0}$
  is a set of representatives of the isomorphism classes of simple objects of $\C \tilde Q$-mod
  and $\cP$-mod.
\end{Lemma}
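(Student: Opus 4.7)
The plan is to show (a) each $s^i$ is a simple locally nilpotent module over both $\C\tilde Q$ and $\cP$, and (b) conversely, every simple locally nilpotent $A$-module is isomorphic to some $s^i$, where $A$ denotes either algebra. Part (a) is immediate, since $s^i$ is one-dimensional with $A_{\ge 1} \cdot s^i = 0$.

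For (b), the structural ingredient is that $A$ is non-negatively graded with $A_n = A_1 \cdot A_{n-1}$ for all $n \ge 1$: in $\C\tilde Q$ every path of length $n$ factors as an arrow times a path of length $n-1$, and this descends to $\cP$ because the preprojective relations are homogeneous of degree $2$. Given a simple locally nilpotent $A$-module $V$, I first exhibit a nonzero $v \in V$ with $A_1 v = 0$. Pick any nonzero $w \in V$ and let $n$ be the minimal integer with $A_n w = 0$; since $e_i \in A_0$ and $w = \sum_i e_i w$ is nonzero, we have $n \ge 1$. If $n = 1$ take $v = w$; if $n \ge 2$ then $A_{n-1} w \ne 0$ by minimality, and any nonzero $v \in A_{n-1} w$ satisfies $A_1 v \subseteq A_1 A_{n-1} w = A_n w = 0$. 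Given such a $v$, induction on $k$ using $A_k = A_1 A_{k-1}$ shows $A_{\ge 1} v = 0$, so the cyclic submodule $Av$ equals $A_0 v = \bigoplus_i \C\, e_i v$, which is a direct sum of submodules isomorphic to various $s^j$. Simplicity of $V = Av$ forces exactly one summand to be nonzero, yielding $V \cong s^i$.

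For the finite-type specialization, Proposition~\ref{prop:path-algebra-type} yields that $\cP$ is finite-dimensional, so any simple $\cP$-module is finite-dimensional (being cyclic over a finite-dimensional algebra) and hence nilpotent by the same proposition, reducing to the locally nilpotent case already handled. The main subtlety is the construction of $v$ with $A_1 v = 0$: local nilpotence alone only produces some $n$ with $A_n w = 0$, not $A_1 w = 0$, so the minimality-of-$n$ trick combined with the factorization $A_n = A_1 A_{n-1}$ is essential, and it is precisely the homogeneity of the preprojective relations that makes this factorization survive the quotient.
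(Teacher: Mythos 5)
Your proof is correct, but it takes a genuinely more hands-on route than the paper's. The paper argues top-down: any nonzero vector of a simple locally nilpotent module generates it, so the module is finite-dimensional (here finiteness of the quiver enters, since each graded piece of the algebra is finite-dimensional), hence nilpotent; then $A_+$ is a two-sided ideal acting nilpotently, so it annihilates every simple, and the classification reduces to simple modules over $A/A_+ \cong \bigoplus_{i} \C e_i$. You instead manufacture a socle vector directly: minimality of $n$ with $A_n w = 0$ combined with degree-one generation $A_n = A_1 A_{n-1}$ (true in $\C \tilde Q$ and inherited by $\cP$ since the grading descends along the quotient) gives $v \ne 0$ with $A_{\ge 1} v = 0$, and simplicity forces $V = \C e_i v \cong s^i$. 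Your route avoids the finite-dimensionality/nilpotency reduction entirely, while the paper's quotient-algebra argument is shorter and doubles as the mechanism behind the socle description in Lemma~\ref{lem:socles}; note that your identity $A_{\ge 1} = A_1 A_{\ge 0}$ is precisely what underlies the paper's isomorphism $A/A_+ \cong \bigoplus_i \C e_i$, so the two proofs are close cousins mechanically. One caveat on the ``in particular'' clause: you only treat $\cP$-Mod (finite-dimensional algebra, so simples are cyclic hence finite-dimensional, hence nilpotent by Proposition~\ref{prop:path-algebra-type}), and that is also all that the paper's citation of that proposition actually supports. The $\C \tilde Q$-Mod part of the clause is addressed by neither argument, and in fact cannot be: even for finite type $Q$ the double quiver has oriented cycles, $\C \tilde Q$ is infinite-dimensional, and (e.g.\ in type $A_2$) the two-dimensional representation with both arrows acting invertibly is a simple $\C \tilde Q$-module not isomorphic to any $s^i$. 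So that omission is a defect of the statement and of the paper's own proof rather than a gap particular to your write-up.
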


\begin{proof}
  Any nonzero element of a simple locally nilpotent module $M$
  generates a finite-dimensional module which must be all of $M$.
  Therefore $M$ is finite-dimensional and hence nilpotent.
  Then $(\C \tilde Q)_+$ and $\mathcal{P}_+$ are two-sided ideals
  of $\C \tilde Q$ and
  $\mathcal{P}$ respectively that act nilpotently on any nilpotent
  module.  Therefore, simple nilpotent $\C \tilde Q$-modules and
  $\mathcal{P}$-modules are the same as simple $\C \tilde Q/(\C \tilde Q)_+$-modules
  and $\mathcal{P}/\mathcal{P}_+$-modules respectively.  Since
  \[
    \C \tilde Q/(\C \tilde Q)_+ \cong \mathcal{P}/\mathcal{P}_+ \cong \bigoplus_{i \in
    I} \C e_i,
  \]
  the first statement follows.  The second statement then follows
  from Proposition~\ref{prop:path-algebra-type}.
\end{proof}

\begin{Lemma} \label{lem:socles}
  Fix a quiver $Q$ and let $A$ be either $\C \tilde Q$ or $\cP(Q)$.  If $V \in
  A$-lnMod, then the socle of $V$ is $\{v \in V\ |\ A_+ \cdot v=0\}$.
\end{Lemma}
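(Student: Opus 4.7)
The plan is to prove the two inclusions separately. Write $S = \{v \in V \mid A_+ \cdot v = 0\}$; the goal is to show $S = \socle V$.

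For the inclusion $S \subseteq \socle V$, take $v \in S$ and decompose $v = \sum_{i \in Q_0} e_i v$ using $\sum_i e_i = 1$ in $A$. The key observation is that the grading forces $A_+ e_i \subseteq A_+$ (any element of $A_+$ multiplied on the right by a degree-$0$ element still has positive degree), so for any $b \in A_+$ we have $b(e_i v) = (be_i) v \in A_+ \cdot v = 0$. Hence each $A \cdot (e_i v) = A_0 \cdot (e_i v) = \C (e_i v)$ is either zero or is isomorphic as an $A$-module to the simple $s^i$. Therefore $A \cdot v = \sum_i \C(e_i v)$ is a semisimple submodule of $V$ containing $v$, so $v \in \socle V$.

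For the reverse inclusion $\socle V \subseteq S$, let $v \in \socle V$, so $v$ lies in a semisimple submodule $M \subseteq V$. Any simple submodule of $M$ is a submodule of $V$, hence locally nilpotent (local nilpotency passes to submodules), so by Lemma~\ref{lem:simples} it is isomorphic to some $s^i$. In particular $A_+$ annihilates every simple summand of $M$, hence $A_+ \cdot M = 0$, and so $A_+ \cdot v = 0$, i.e.\ $v \in S$.

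The argument is essentially bookkeeping given Lemma~\ref{lem:simples}; the only subtle point is the first paragraph's verification that each $e_i v$ generates (at most) a simple submodule, which rests on the mild observation that $A_+ e_i \subseteq A_+$. I expect no real obstacle: the main hypothesis (local nilpotency of $V$) enters only in the reverse inclusion, where it is needed to classify the simple submodules of $V$ via Lemma~\ref{lem:simples}.
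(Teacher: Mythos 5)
Your proof is correct and follows essentially the same route as the paper: one inclusion by showing that $\{v \in V \mid A_+ \cdot v = 0\}$ is a sum of simple submodules (you make explicit, via the idempotents $e_i$, what the paper declares ``clear''), and the reverse inclusion by invoking Lemma~\ref{lem:simples} together with the fact that submodules of locally nilpotent modules are locally nilpotent. No gaps; the extra detail in your first paragraph is just a fleshed-out version of the paper's argument.
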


\begin{proof}
  It is clear that $\{v \in V\ |\ A_+ \cdot v=0\}$ is a sum of simple
  subrepresentations of $V$ and is thus contained in the socle
  of $V$.  Similarly, by Lemma~\ref{lem:simples}, any simple
  subrepresentation of $(V,x)$ is contained in $\{v \in V\ |\ A_+ \cdot v=0\}$.
\end{proof}

\subsection{Projective covers}

Recall that if $A$ is an associative algebra and $V$ is an
$A$-module, then a \emph{projective cover} of $V$ is a pair $(P,f)$
such that $P$ is a projective $A$-module and $f : P \to V$ is a
superfluous epimorphism of $A$-modules.  This means that $f(P)=V$
and $f(P') \ne V$ for all proper submodules $P'$ of $P$. We often
omit the homomorphism $f$ and simply call $P$ a projective cover of
$V$.

\begin{Definition}
  For $i \in Q_0$, let $p^i = \mathcal{P} e_i$.
\end{Definition}

\begin{Lemma} \label{lem:projectives}
  Assume $Q$ is a quiver of finite type.
  For $i \in Q_0$, $\{p^i\}_{i \in Q_0}$ is a set of representatives
  of the isomorphism classes of indecomposable projective $\mathcal{P}$-modules.
  Furthermore, $p^i$ is a projective cover of $s^i$.
\end{Lemma}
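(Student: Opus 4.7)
The plan is to apply the standard structure theory of finite-dimensional algebras. Since $Q$ is of finite type, Proposition~\ref{prop:path-algebra-type} gives that $\cP$ is finite-dimensional, and the trivial paths $\{e_i\}_{i \in Q_0}$ form a complete set of orthogonal idempotents with $\sum_i e_i = 1$. This produces the decomposition $\cP = \bigoplus_{i \in Q_0} \cP e_i = \bigoplus_{i \in Q_0} p^i$ of the left regular module, so each summand $p^i$ is automatically projective.

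Next I would identify the Jacobson radical of $\cP$. The degree-zero subalgebra $\cP_0 \cong \bigoplus_i \C e_i$ is semisimple, while $\cP_+$ is a two-sided ideal with semisimple quotient $\cP/\cP_+ \cong \cP_0$. Because $\cP$ is finite-dimensional and graded, only finitely many graded components are nonzero, so $\cP_+$ is nilpotent. These two observations together force $\mathrm{rad}\,\cP = \cP_+$. Right-multiplying by $e_i$ then gives $\mathrm{rad}\, p^i = \cP_+ e_i$ and a natural isomorphism $p^i / \mathrm{rad}\, p^i \cong s^i$. Having a simple top, $p^i$ is indecomposable, and $p^i \not\cong p^j$ for $i \ne j$ because their tops are the pairwise non-isomorphic simples from Lemma~\ref{lem:simples}.

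The projective cover assertion now follows from Nakayama's lemma: over a finite-dimensional algebra the radical of a finitely generated module is a superfluous submodule, so the quotient map $p^i \twoheadrightarrow s^i$ with kernel $\mathrm{rad}\, p^i$ is a projective cover in the sense of the definition recorded just before the statement. The classification is immediate from the Krull--Schmidt theorem applied to the finite-dimensional algebra $\cP$: any indecomposable projective is a direct summand of some free $\cP$-module, hence of the regular module, and therefore must be isomorphic to one of the $p^i$.

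The only step that is not completely automatic is the identification $\mathrm{rad}\,\cP = \cP_+$, but even this is straightforward once one combines the grading with finite-dimensionality. So no serious obstacle arises; the proof amounts to assembling well-known facts about projective covers, radicals, and Krull--Schmidt decompositions for finite-dimensional algebras in the correct order.
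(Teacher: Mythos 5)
Your proof is correct, but it is genuinely different from the paper's: the paper simply cites Lusztig \cite[Proposition~1.9]{Lus00b} for the classification of the indecomposable projectives and remarks that the projective cover statement ``follows easily,'' whereas you give a self-contained argument from the structure theory of finite-dimensional algebras. Your key steps all check out: since $Q$ is of finite type, $\cP$ is finite-dimensional (Proposition~\ref{prop:path-algebra-type}), so the grading forces $\cP_n=0$ for $n\gg 0$ and hence $\cP_+$ is nilpotent; together with $\cP/\cP_+\cong\cP_0$ semisimple this gives $\mathrm{rad}\,\cP=\cP_+$, so $p^i/\mathrm{rad}\,p^i\cong s^i$, yielding indecomposability, pairwise non-isomorphy, and (via superfluousness of the radical) the projective cover property, while the decomposition $\cP=\bigoplus_i p^i$ plus Krull--Schmidt gives the classification. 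What your route buys is transparency: it exhibits explicitly why the top of $p^i$ is $s^i$, which is exactly the content behind the paper's ``follows easily,'' and it avoids an external reference; the paper's route buys brevity and consistency with its later reliance on \cite{Lus00b}. One small point to tighten: in the classification step, ``a direct summand of some free module, hence of the regular module'' is only immediate for finitely generated projectives, where Krull--Schmidt applied to $\cP^{\oplus n}=\bigoplus_i (p^i)^{\oplus nw_i}$ finishes the argument; to cover arbitrary indecomposable projectives you should either restrict attention to finitely generated ones (which is the intended meaning) or invoke the fact that a finite-dimensional algebra is (semi)perfect, so every projective module is a direct sum of the principal indecomposables $\cP e_i$.
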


\begin{proof}
This follows from \cite[Prop.~4.8]{ARS97}.
\end{proof}

\begin{Lemma}
  Assume $Q$ is a quiver of affine (tame) or indefinite (wild) type.
  Then there exist $i \in Q_0$ for which the simple module $s^i$ does
  not have a projective cover.
\end{Lemma}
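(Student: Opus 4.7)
The plan is to find $i \in Q_0$ such that $\cP_+ e_i$ fails to be superfluous in $p^i = \cP e_i$, and to rule out any other projective from serving as a projective cover of $s^i$. First, I would establish that $p^i$ is indecomposable. Since $\End_\cP(p^i) \cong (e_i \cP e_i)^{\mathrm{op}}$, it suffices to show $e_i \cP e_i$ has only trivial idempotents. Writing an idempotent in graded components as $r = r_0 + r_1 + \cdots + r_n$ (and noting $(e_i \cP e_i)_0 = \C e_i$), the degree-$0$ piece of $r^2 = r$ forces $r_0 \in \{0, e_i\}$, and an induction using the degree-$k$ components of $r^2 = r$ then yields $r_k = 0$ for all $k \ge 1$.

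Next, suppose $(P, \pi)$ is a projective cover of $s^i$. The canonical surjection $\pi_0 \colon p^i \twoheadrightarrow s^i$ arising from $\cP/\cP_+ \cong \bigoplus_j \C e_j$ lifts, by projectivity of $p^i$, to a map $\tilde\pi_0 \colon p^i \to P$ with $\pi \circ \tilde\pi_0 = \pi_0$. Then $\tilde\pi_0(p^i) + \ker\pi = P$, and superfluousness of $\ker\pi$ forces $\tilde\pi_0$ to be surjective; projectivity of $P$ splits this, exhibiting $P$ as a direct summand of $p^i$, whence $P \cong p^i$ by indecomposability. Thus $s^i$ has a projective cover if and only if $\cP_+ e_i$ is a superfluous submodule of $p^i$.

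Finally, I would exhibit an $i$ for which this superfluousness fails, using the non-Dynkin hypothesis. By Proposition~\ref{prop:path-algebra-type}, there is a finite-dimensional non-nilpotent $\cP$-module $N$. Because $\cP$ is generated in degrees $0$ and $1$, one has $\cP_n \subseteq \cP_+^n$, so any finite-dimensional $\cP$-module whose composition factors are all among $\{s^j\}_{j \in Q_0}$ is killed by some power of $\cP_+$ and hence by some $\cP_n$, making it nilpotent. Therefore $N$ has a composition factor $V$ which is a finite-dimensional simple $\cP$-module not isomorphic to any $s^j$. Then $\cP_+ V$ is a nonzero submodule of the simple $V$, forcing $\cP_+ V = V$. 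Choose $i$ with $V_i \neq 0$ and a nonzero $v \in V_i$; the map $p^i \twoheadrightarrow V$ sending $e_i \mapsto v$ has proper kernel $M$, and $\cP_+ e_i$ maps onto $\cP_+ v = V$, so $M + \cP_+ e_i = p^i$ with $M$ proper, contradicting superfluousness. The main obstacle is the reduction (via indecomposability and lifting) showing that any projective cover of $s^i$ must coincide with $p^i$; once that is in hand, the non-nilpotent composition factor supplied by the non-Dynkin hypothesis quickly finishes things off.
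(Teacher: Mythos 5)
Your proof is correct, but it runs along a different track than the paper's. The paper invokes the characterization of projective covers of cyclic modules from Anderson--Fuller (a cyclic module has a projective cover iff it is of the form $\cP e/Ie$ with $I$ inside the Jacobson radical), observes that this would force $\cP_{\ge 1}e_i \subseteq J(\cP)$, and then contradicts this by citing Crawley-Boevey's theorem directly to produce a simple $\cP$-module $T$ whose dimension vector is a minimal positive imaginary root, choosing $i$ in (or adjacent to, in the paper's other lemma's style --- here, in) the support of that root. You instead prove from scratch that $p^i$ is indecomposable (via the absence of nontrivial idempotents in the positively graded algebra $e_i\cP e_i$), deduce by a lifting-and-splitting argument that the only possible projective cover of $s^i$ is $p^i$ itself, and then kill superfluousness of $\cP_+e_i$ using a ``big'' simple module; crucially, you source that simple module not from Crawley-Boevey's theorem directly but as a composition factor (not isomorphic to any $s^j$) of a finite-dimensional non-nilpotent module, whose existence is exactly the failure of condition (ii) in Proposition~\ref{prop:path-algebra-type} for non-Dynkin $Q$. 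So the deep input is ultimately the same (the proposition's equivalence rests on \cite{Cra01}), but your access to it is through a result already stated in the paper, and your replacement of the Jacobson-radical criterion by an explicit uniqueness-of-cover argument makes the proof more self-contained, at the cost of being longer and of identifying $i$ less explicitly than the paper's ``vertex in the support of a minimal imaginary root.'' One small step you elide: from $\cP_+V=V$ you cannot literally conclude $\cP_+v=V$ for your chosen $v$; the correct (and equally short) argument is that $\cP_+v$ is a submodule of the simple $V$, and if it were zero then $\C v$ would be a one-dimensional submodule, forcing $V\cong s^i$, a contradiction --- with that sentence inserted, the argument is complete.
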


\begin{proof}
  Since the module $s^i$ is obviously cyclic, by
  \cite[Lemma~27.3]{AF92} it has a projective cover if and only if
  $s^i \cong \cP e/Ie$ for some idempotent $e \in \cP$ and some left
  ideal $I$ contained in the Jacobson radical of $\cP$.  Assume this
  is true for some idempotent $e$ and ideal $I$.  Then we must have
  $e=e_i$ and then $I$ would have to contain $\cP_+ e_i$, the ideal
  consisting of all paths of length at least one starting at vertex
  $i$.  We identify $\Z Q_0$ with the root lattice
  via $\sum v_j j \leftrightarrow \sum v_j \alpha_j$.
  Let $\beta$ be a minimal positive imaginary root and let $i$ be in
  the support of $\beta$ (i.e.\ $\beta = \sum \beta_j \alpha_j$ with
  $\beta_i>0$).
  By \cite[Theorem~1.2]{Cra01}, there is a simple
  module $T$ of $\cP$ whose dimension vector is $\beta$ and so,
  in particular, $\dim T_i \ne 0$.
  Since the simple module $T$ cannot be killed by $\cP_+ e_i$ (since
  then $T_i$ would be a proper submodule), $\cP_+ e_i$ is
  not contained in the Jacobson radical of $\cP$.  This contradicts
  the fact that $I$ is contained in the Jacobson radical.
\end{proof}

\subsection{Injective hulls}

Recall that if $A$ is an associative algebra and $V$ is an
$A$-module, then an \emph{injective hull} of $V$ is an injective
$A$-module $E$ that is an essential extension of $V$ (that is, $V$
is a submodule of $E$ and any nonzero submodule of $E$ intersects
$V$ nontrivially).  By the Baer's Theorem \cite{Bae40}, the category
$\cP$-Mod has enough injectives.  In particular, the simple modules
$s^i$ have injective hulls. Here we give an explicit description of
these injective hulls in the finite type case, and study some of
their properties in the more general case.

\begin{Definition} \label{def:finite-type-injectives}
  Assume $Q$ is a quiver of finite type.
  For $i \in Q_0$, let $q^i = \Hom_\C (e_i \cP, \C)$ be
  the dual space of the right $\cP$-module $e_i \cP$.  Define a left
  $\cP$-module structure on $q^i$ by setting $a \cdot f(x) = f(xa)$,
  for $a \in \cP$, $f \in q^i$, and $x \in e_i \cP$.
\end{Definition}

\begin{Lemma} \label{lem:injectives}
  If $Q$ is a quiver of finite type, then $\{q^i\}_{i \in Q_0}$ is a
  set of representatives
  of the isomorphism classes of indecomposable injective $\cP$-modules.
  Furthermore, $q^i$ is an injective hull of $s^i$.
\end{Lemma}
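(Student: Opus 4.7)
The plan is to obtain the $q^i$ as $\C$-linear duals of the indecomposable projective right $\cP$-modules, exploiting the standard duality between finite-dimensional left and right modules over a finite-dimensional algebra. The key input is that, by Proposition~\ref{prop:path-algebra-type}, the finite type assumption forces $\cP$ to be finite-dimensional, so $D := \Hom_\C(-,\C)$ is well-defined on all relevant modules and yields a contravariant equivalence between $\cP$-mod and $\cP^{op}$-mod. Under this duality, projective modules correspond to injective modules, indecomposability is preserved, and the simple top of a module is sent to the simple socle of its dual.

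To apply this, I would first establish the right-module analogue of Lemma~\ref{lem:projectives}: the set $\{e_i \cP\}_{i \in Q_0}$ represents the isomorphism classes of indecomposable projective right $\cP$-modules, and $e_i\cP$ has simple top $s^i$ (viewed as a right module). One obtains this either by running the argument cited in Lemma~\ref{lem:projectives} for $\cP^{op}$ in place of $\cP$, or by transporting via the algebra anti-involution $\cP \to \cP$ sending each generator $a \in \tilde Q_1$ to $\bar a$; this anti-involution preserves each relation $a\bar a - \bar a a$ and therefore identifies $\cP$ with $\cP^{op}$.

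Applying $D$ then immediately gives that $\{q^i = D(e_i\cP)\}_{i \in Q_0}$ is a set of representatives of the indecomposable injective left $\cP$-modules. For the second assertion, the simple top of the right module $e_i \cP$ is $s^i$, and its $\C$-linear dual is again $s^i$ as a left module (the simples are one-dimensional, and the left action on $D(s^i)$ defined by $(a \cdot f)(x) = f(xa)$ recovers $s^i$). Consequently $\socle(q^i) \cong s^i$. Since $q^i$ is indecomposable and injective, it is automatically an essential extension of its socle, so $q^i$ is an injective hull of $s^i$.

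The only real obstacle is bookkeeping around opposite algebras and module sides: one must verify that the left action on $q^i$ specified in Definition~\ref{def:finite-type-injectives} is precisely the action induced by $D$, so that the duality sends the simple top of $e_i\cP$ to $s^i$ as a left $\cP$-module rather than to some twisted version. This is a short check once the conventions are fixed, after which the argument is entirely formal.
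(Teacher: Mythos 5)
Your proof is correct, and it overlaps with the paper's at its starting point but then diverges. The paper also realizes $q^i$ as $\Hom_\C(e_i\cP,\C)$ and gets injectivity from the fact that the dual of the projective right module $e_i\cP$ is an injective left module (citing Lam); but for the hull statement it proceeds by hand: given $0\neq f\in q^i$, it picks a maximal-length path $\beta$ with $f(\beta)\neq 0$ and checks that $\beta\cdot f$ is a nonzero multiple of the canonical generator of $s^i\subseteq q^i$, so $s^i$ is essential, and it leaves the completeness of the list of indecomposable injectives essentially implicit. You instead run the full finite-dimensional duality machine: $\cP\cong\cP^{\mathrm{op}}$ via the path-reversing anti-involution $a\mapsto\bar a$ (which indeed fixes $\sum_a(a\bar a-\bar a a)$), so the right-module analogue of Lemma~\ref{lem:projectives} gives $\{e_i\cP\}$ as the indecomposable projectives with simple top, and $D=\Hom_\C(-,\C)$ exchanges projectives with injectives, tops with socles, and preserves indecomposability; essentiality then comes for free since an indecomposable injective is uniform (or, even more simply, since the socle of a finite-dimensional module is essential). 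Both arguments are valid; yours buys a shorter, convention-checking-only proof at the cost of leaning on the cited classification of projectives and standard facts about dualities of finite-dimensional algebras, while the paper's explicit path computation is more self-contained and does not need the identification $\cP\cong\cP^{\mathrm{op}}$. One small shared gap worth noting: the claim that the $q^i$ exhaust \emph{all} indecomposable injectives in $\cP$-Mod (not just the finite-dimensional ones) uses the standard fact that over an artinian algebra every indecomposable injective is the injective hull of a simple module; your "applying $D$ immediately gives" glosses this, but the paper's proof is no more explicit on that point.
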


\begin{proof}
If $Q$ is of finite type, then $\cP$ is finite-dimensional by Proposition~\ref{prop:path-algebra-type}.  The result then follows from Lemma~\ref{lem:projectives} and a well-known fact about modules over finite-dimensional algebras (see, for example, \cite[Cor.~3.66]{Lam99}).
\end{proof}

For $w = \sum_i w_i i \in \N Q_0$, define the semi-simple
$\cP$-module
\[
  s^w = \bigoplus_{i \in Q_0} (s^i)^{\oplus w_i}.
\]
Let $q^i$ be the injective hull of $s^i$ in the category $\cP$-Mod
(if $Q$ is a quiver of finite type, this agrees with the notation of
Definition~\ref{def:finite-type-injectives}). Then
\[
  q^w = \bigoplus_{i \in I} (q^i)^{\oplus w_i}
\]
is the injective hull of $s^w$.  We also define
\[
  p^w = \bigoplus_{i \in I} (p^i)^{\oplus w_i}.
\]

%

\stepcounter{Theorem}

\begin{Remark} \label{rem:p-nilpotent}
  It follows from Proposition~\ref{prop:path-algebra-type}, Lemma~\ref{lem:injectives},
  and Proposition~\ref{prop:injective=projective} that if $Q$ is a
  quiver of finite type, then $p^w$ (and $q^w$)
  is nilpotent.  However, in
  general the $p^w$ are not nilpotent.
\end{Remark}

\begin{Proposition} \label{prop:injective-ln-condition}
If $Q$ is of affine (tame) type, then $q^w$ is locally nilpotent for
all $w \in \N Q_0$. If $Q$ is connected and of indefinite (wild) type, then $q^w$
is not locally nilpotent for any $w \in \N Q_0$, $w \ne 0$.
\end{Proposition}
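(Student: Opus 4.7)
The first step is to reduce to the case of a single indecomposable injective. Since $q^w = \bigoplus_i (q^i)^{\oplus w_i}$ and a finite direct sum of $\cP$-modules is locally nilpotent if and only if each summand is, the proposition reduces to showing that each $q^i$ is locally nilpotent in the affine case and not locally nilpotent in the wild case (the case $w=0$ giving $q^0=0$, which is vacuously locally nilpotent, so the second statement really applies to nonzero $w$).

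For the affine case, my plan is to identify $q^i$ with the graded dual $\tilde q^i := \bigoplus_{n \ge 0} (e_i\cP_n)^*$ of the right $\cP$-module $e_i\cP$, equipped with the left action $(a \cdot f)(x) = f(xa)$ of Definition~\ref{def:finite-type-injectives}. Since $Q$ is a finite quiver, each $e_i\cP_n$ is finite dimensional, so $\tilde q^i$ is well defined; moreover $\cP_m \cdot \tilde q^i_n \subseteq \tilde q^i_{n-m}$, so any homogeneous element of degree $n$ is annihilated by $\cP_{>n}$ and $\tilde q^i$ is locally nilpotent. The functional $f_0 \in \tilde q^i_0$ with $f_0(e_i)=1$ spans a copy of $s^i$, and an argument parallel to that in the proof of Lemma~\ref{lem:injectives} -- take an element of minimal positive degree in a cyclic subrepresentation and hit it with a length-maximising path on which it is nonzero -- shows this copy is essential. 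The main remaining task, and the principal obstacle on the affine side, is to verify that $\tilde q^i$ is \emph{injective in all of $\cP$-Mod}, and not merely among graded or locally nilpotent modules; by uniqueness of injective hulls this would then give $\tilde q^i \cong q^i$, forcing local nilpotency of $q^i$. Affineness should enter through the Noetherianity and polynomial growth of affine preprojective algebras: a $\cP$-linear map from a left ideal of $\cP$ to $\tilde q^i$ has image concentrated in finitely many graded pieces and extends to all of $\cP$ by dualising a suitable lift, verifying Baer's criterion.

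For the wild case, for each $i$ with $w_i > 0$ my plan is to exhibit a finite-dimensional non-nilpotent $\cP$-module $E$ with $\socle(E) = s^i$. Such an $E$ is an essential extension of $s^i$, hence embeds into $q^i \subseteq q^w$, and being finite dimensional and non-nilpotent it is not locally nilpotent, forcing the same for $q^w$. I would obtain $E$ as a non-split extension
\[
  0 \to s^i \to E \to T \to 0
\]
in $\cP$-Mod, where $T$ is a finite-dimensional simple $\cP$-module of dimension vector some positive imaginary root $\beta$ of $\g$ with $\beta_i > 0$; such a $\beta$ exists because the imaginary root system of a wild Kac-Moody algebra is large, and the module $T$ is produced by Crawley-Boevey's theorem (\cite[Theorem 1.2]{Cra01}), already invoked in the preceding lemma. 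By Lemma~\ref{lem:simples} the simple $T$ is not locally nilpotent and, being finite dimensional, not nilpotent. In any non-split extension of $T$ by $s^i$, the socle of $E$ equals $s^i$ (a strictly larger socle would project nontrivially to $\socle(T)=T$, splitting the sequence since $T$ is simple), and $E$ inherits non-nilpotency from its quotient $T$. The main obstacle on the wild side is to guarantee $\Ext^1_\cP(T, s^i) \ne 0$ so that such an extension exists; the natural approach is via the Ringel form for finite-dimensional $\cP$-modules combined with the indefiniteness of the Tits form in the wild case, leveraging Crawley-Boevey's homological machinery for preprojective algebras (whose contribution to this proposition is explicitly acknowledged in the introduction) as the source of a clean argument.
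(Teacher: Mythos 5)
Your proposal has genuine gaps on both halves. On the affine side, the step you yourself flag as ``the main remaining task'' --- that the graded dual $\tilde q^i=\bigoplus_{n\ge 0}(e_i\cP_n)^*$ is injective in all of $\cP$-Mod, not just among locally nilpotent modules --- is not a technical verification but is essentially a restatement of the proposition: $\tilde q^i$ is the injective hull of $s^i$ in $\cP$-lnMod (cf.\ Theorem~\ref{thm:ln-injective-hull}), and it coincides with $q^i$ exactly when $q^i$ is locally nilpotent. In the wild case this same graded dual is \emph{not} injective in $\cP$-Mod, so any Baer-criterion argument must use tameness in an essential way; ``the image lies in finitely many graded pieces, extend by dualising a lift'' does not engage with the real difficulty, which is controlling extensions of $\tilde q^i$ by cyclic modules $\cP/I$ that are infinite-dimensional and not locally nilpotent. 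The paper imports the needed finiteness from ring theory: for tame $Q$ the preprojective algebra is a Noetherian PI algebra \cite{BGL87}, so its simple modules are finite-dimensional \cite{MR01} and, by Jategaonkar's theorem \cite{Jat76}, the injective hull $q^i$ of a simple is artinian; hence finitely generated submodules of $q^i$ have finite length, and since the simple dimension vectors are the coordinate vectors and $\delta$ with $(\delta,\alpha_j)=0$ (so the $\delta$-simples have no extensions with the one-dimensional simples), all composition factors are one-dimensional and $q^i$ is locally nilpotent. Your plan contains no substitute for this input.

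On the wild side your construction is the paper's (a non-split extension $0\to s^i\to E\to T\to 0$ with $T$ a simple of imaginary dimension vector $\beta$ supplied by \cite[Theorem~1.2]{Cra01}), and your socle argument is fine, but the step you defer --- $\Ext^1_\cP(T,s^i)\ne 0$ --- is the crux, and the condition you impose, $\beta_i>0$, does not deliver it. By \cite[Lemma~1]{Cra00}, for non-isomorphic simples $\dim\Ext^1_\cP(T,s^i)=-(\beta,\alpha_i)$, so what is needed is $(\beta,\alpha_i)<0$; a minimal imaginary root only satisfies $(\beta,\alpha_j)\le 0$, and equality can occur at vertices of its support, in which case your extension does not exist for that $i$. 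The paper obtains strict inequality precisely from wildness: if $\beta$ has full support it cannot be a radical vector of the form (\cite[Theorem~4.3]{K}), and otherwise one takes $i$ \emph{outside} the support of $\beta$ but adjacent to it (so $\beta_i=0$). Finally, since you aim to treat every vertex $i$ with $w_i>0$, you would need such a pair $(\beta,i)$ with $(\beta,\alpha_i)<0$ for each relevant $i$; ``the imaginary root system is large'' is not an argument and this also needs to be supplied.
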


The following proof was explained to us by W.~Crawley-Boevey.

\begin{proof}
It suffices to consider the case where $w=i$ for some $i \in Q_0$.  We identify $\Z Q_0$ with the root lattice via $\sum v_j j \leftrightarrow \sum v_j \alpha_j$.  We first assume that $Q$ is connected of wild type. Let $\beta$ be a minimal positive imaginary root. Thus $(\beta, j) \le 0$ for all $j \in Q_0$. Suppose
the support of $\beta$ is all of $Q_0$. Since $Q$ is wild, $\beta$
cannot be a radical vector (see \cite[Theorem~4.3]{K}), so $(\beta,
j) < 0$ for some $j \in Q_0$. If, on the other hand, the support of
$\beta$ is not all of $Q_0$, we take $j \in Q_0$ to be a vertex not
in the support of $\beta$ but connected to it by an arrow and we
again have $(\beta, j) < 0$.  By \cite[Theorem~1.2]{Cra01}, there is
a simple module $T$ for the preprojective algebra of dimension
$\beta$. By \cite[Lemma~1]{Cra00}, $\Ext^1 (T,s^j)$ is nonzero.  Let
$V$ be a nontrivial extension of $T$ by $s^j$.  This module must
embed in the injective hull $q^j$ of $s^j$ and thus $q^j$ cannot be
locally nilpotent.  Thus the result holds whenever $(\beta,i) <0$.  For general $i$, choose a shortest path from $i$ to some $j$ with $(\beta,j)<0$ and consider the corresponding nilpotent module $U$ with head $s^j$ and socle $s^i$.  Then, as above, there is a nontrivial extension of $T$ by $U$, which must embed into $q^i$. So $q^i$ is not locally nilpotent.

Now assume that $Q$ is of tame type.  Since the preprojective
algebra of a tame quiver is a finitely generated $\C$-algebra,
noetherian, and a polynomial identity ring \cite[Theorem~6.5]{BGL87}
(see \cite{Rin98} for a proof that the preprojective algebra
considered there is the same as the one considered here), any simple
module is finite-dimensional (see \cite[Theorem~13.10.3]{MR01}).  By
\cite[Theorem~2]{Jat76}, the injective hull of a simple $\cP$-module
is artinian. In particular, finitely generated submodules of
injective hulls of simple modules are artinian and noetherian.  Thus
they are of finite length and hence finite-dimensional.  Now, the
dimension vectors of simple $\cP$-modules are the coordinate vectors
$i \in Q_0$ and the minimal imaginary root $\delta$.  Since
$(\delta, i) = 0$ for all $i \in Q_0$, there are no nontrivial
extensions between simples of dimension $\delta$ and the
one-dimensional simples. Therefore, the composition factors of the
finite-dimensional submodules of the injective hull $q^i$ of $s^i$
are all one-dimensional simple modules.  Thus $q^i$ is locally
nilpotent.
\end{proof}

\begin{Remark}
In types $A$ and $D$, there exist simple and explicit descriptions
of the representations $q^i$,  $i \in Q_0$, in terms of classical
combinatorial objects such as Young diagrams (see
\cite{FS03,Sav03b,S03}).  This allows one to give simple and
explicit descriptions of the injective modules $q^w$ for any $w \in
\N Q_0$ when the underlying graph of the corresponding quiver is of
type $A$ or $D$.
\end{Remark}

\subsection{Quiver grassmannians}

\begin{Definition}[Quiver grassmannian] \label{def:QG}
For a $\C Q$-module $V$, let $\Gr_Q(V)$ be the set of all finite-dimensional $\C
Q$-submodules of $V$ and let $\NGr_Q(V)$ be the set of all finite-dimensional nilpotent $\C Q$-submodules of $V$.  We have natural decompositions
\begin{gather*}
    \Gr_Q(V) = \bigsqcup_{u \in \N Q_0} \Gr_Q(u,V),\quad
    \Gr_Q(u,V) = \{U \in \Gr_Q (V)\ |\ \dim U = u\}, \\
    \NGr_Q(V) = \bigsqcup_{u \in \N Q_0} \NGr_Q(u,V),\quad
    \NGr_Q(u,V) = \{U \in \NGr_Q (V)\ |\ \dim U = u\}.
\end{gather*}
We call $\Gr_Q(u,V)$ a \emph{quiver grassmannian} and $\NGr_Q(u,V)$ a \emph{nilpotent quiver grassmannian}.
If $V$ is a finite-dimensional module, then $\Gr_Q(u,V)$ is a closed subset of the usual grassmannian of dimension $u$ subspaces of $V$ and thus is a projective variety.  More generally, if the linear span of all points in $\Gr_Q(V)$ or $\NGr_Q(V)$ is finite dimensional in $V$, then that set has a natural variety structure.

If $V$ is a $\cP$-module, then $\cP$-submodules of $V$ are the same as $\C \tilde Q$-submodules of $V$.  Hence one can think of
$\Gr_{\tilde Q}(V)$ as the set of all finite-dimensional $\cP$-submodules of $V$.
Therefore, we will often write $\Gr_\cP (V)$ and $\Gr_\cP (u,V)$ for
$\Gr_{\tilde Q}(V)$ and $\Gr_{\tilde Q}(u,V)$ when $V$ is a
$\cP$-module.  Similarly, we will sometimes write $\NGr_\cP (V)$ and $\NGr_\cP (u,V)$ for
$\NGr_{\tilde Q}(V)$ and $\NGr_{\tilde Q}(u,V)$.
\end{Definition}

\begin{Example}[Grassmannians]
If $Q$ is the quiver with a single vertex and no arrows, then $\cP =
\C$ and $\cP$-modules are simply vector spaces.  Then $\Gr_\cP(u,V)
= \Gr (u,V)$ is the usual grassmannian of dimension $u$ subspaces of
$V$.
\end{Example}

\begin{Example}[Partial flag varieties]
Suppose $Q$ is the quiver with $Q_0=\{1,2,\dots,n\}$ and $Q_1=\{a_1,\dots,a_{n-1}\}$, where $s(a_i)=i$, $t(a_i)=i+1$ for all $i=1,\dots,n-1$.  Fix a positive integer $d$ and set $V_i=\C^d$ for all $i=1,\dots,n$.  For each $1 \leq i \leq n-1$, let $a_i$ act by the identification $V_i \cong V_{i+1}$.  Then for $u \in \N Q_0$ with $u_1 \le u_2 \le \dots \le u_n \le d$, the quiver grassmannian $\Gr_\cP(u,V)$ is isomorphic to the partial flag variety
\[
  \{0 \subseteq F_1 \subseteq F_2 \subseteq \dots \subseteq F_n \subseteq \C^d\ |\ \dim F_i = u_i\}.
\]
\end{Example}

\begin{Definition} \label{def:Aut-action-on-QG}
For $V \in \cP$-Mod, we define a natural action of $\Aut_\cP V$ on
$\Gr_\cP(u,V)$ given by
\[
  (g, U) \mapsto g (U),\quad g \in \Aut_\cP V,\quad U \in
  \Gr_\cP(u,V).
\]
\end{Definition}


\section{Quiver varieties} \label{sec:QVs}

In this section we briefly recall certain quiver varieties defined by Lusztig
and Nakajima, referring the reader to \cite{L91,Nak94,Nak98} for
further details, as well as the Demazure quiver varieties introduced
by the first author in \cite{Sav04a}.  We fix a quiver $Q=(Q_0,Q_1)$
and let $\cP=\cP(Q)$ denote its preprojective algebra.

\subsection{Lusztig and Nakajima quiver varieties}

For $V \in \cP_0$-mod, define
\[
  \Rep_{\tilde Q} V = \bigoplus_{a \in \tilde Q_1} \Hom_\C (V_{s(a), t(a)}).
\]
For a path $\beta = a_l \cdots a_1$ in $Q$ and $x=(x_a)_{a \in
\tilde Q_1} \in \Rep_{\tilde Q} V$, we define $x_\beta = x_{a_l}
\cdots x_{a_1}$.  For an element $\sum_j c_j \beta_j \in \C Q$, we
define
\[
  x_{\sum_j c_j \beta_j} = \sum_j c_j x_{\beta_j}.
\]
Thus each $x \in \Rep_{\tilde Q} V$ defines a representation $\C \tilde Q
\to \End_\C V$ of graded dimension $\dim_{Q_0} V$ (i.e.\ whose
induced representation of $(\C Q)_0$ is in the isomorphism class
determined by $\dim_{Q_0} V$).  Furthermore, each such
representation comes from an element of $x \in \Rep_{\tilde Q} V$.
These two statements are simply the equivalence of categories
between the representations of the quiver and of the path algebra.
We say that $x$ is \emph{nilpotent} if there exists $N
> 0$ such that $x_\beta=0$ for all paths $\beta$ of length greater
than $N$.

\begin{Definition}[Lusztig nilpotent variety]
For $V \in \cP_0$-mod, define $\Lambda(V)=\Lambda_Q(V)$ to be the
set of all nilpotent $\mathcal{P}$-module structures on $V$
compatible with its $\cP_0$-module structure. More precisely,
\[
  \Lambda(V) = \left\{ x \in \Rep_{\tilde Q} V\
  \left| \ \sum_{a \in Q_1,\ t(a)=i} x_a x_{\bar a} - \sum_{a \in Q_1,\
  s(a)=i} x_{\bar a} x_a = 0 \ \forall\ i \in Q_0,\
  x \text{ nilpotent} \right. \right\}.
\]
We call $\Lambda(V)$ a \emph{Lusztig nilpotent variety}.
\end{Definition}

As above, elements of $\Lambda(V)$ are in natural one-to-one
correspondence with nilpotent representations $\cP \to \End_\C V$ of
graded dimension $\dim_{Q_0} V$.

For $V,W \in \cP_0$-mod, let $\Lambda(V,W) = \Lambda(V) \times
\Hom_{\cP_0}(V,W)$.  We say that $(x,t) \in \Lambda(V,W)$ is
\emph{stable} if there exists no non-trivial $x$-invariant
$\cP_0$-submodule of $V$ contained in $\ker t$.  This is equivalent
to the condition that $\ker ((x,t)|_{V_i}) = 0$ for all $i \in Q_0$
(see \cite[Lemma~3.4]{FS03} -- while the statement there is for type
$A$, the proof carries over to the more general case).  We denote
the set of stable elements by $\Lambda(V,W)^\st$. There is a natural
action of $\GL_V$ on $\Lambda(V,W)$ and the restriction to
$\Lambda(V,W)^\st$ is free (see \cite{Nak94,Nak98}). We denote the
$\GL_V$-orbit through a point $(x,t)$ by $[x,t]$.

\begin{Definition}[Lagrangian Nakajima quiver variety]
For $V, W \in \cP_0$-mod, let $\mathfrak{L}(V,W) = \Lambda(V,W)^\st
/ \GL_V$. We call $\mathfrak{L}(V,W)$  a \emph{lagrangian
Nakajima quiver variety}.  Up to isomorphism, this variety depends
only on $v=\dim_{Q_0} V$ and $w=\dim_{Q_0} W$ and so we will
sometimes denote it by $\mathfrak{L}(v,w)$.
\end{Definition}

\begin{Remark}
The quiver varieties defined above are lagrangian subvarieties of
what are usually called the Nakajima quiver varieties
\cite{Nak94,Nak98}.
\end{Remark}

\subsection{Group actions} \label{subsec:quiver-action}

Let $G_\cP$ be the group of algebra automorphisms of $\cP$ that fix
$\cP_0$. The group $\GL_W$ acts naturally on $\Hom_{\cP_0} (V,W)$. As
above, we identify elements of $\Lambda(V)$ with nilpotent
representations $\cP \to \End_\C V$ of graded dimension $\dim_{Q_0}
V$.  Then
\[
  (h,(x,t)) \mapsto (h \star x,t),\quad h \star x = x \circ h^{-1},
  \quad h \in G_\cP,
\]
defines a $G_\cP$-action on $\Lambda(V,W)$.  The actions of $\GL_W$
and $G_\cP$ commute and both commute with the $\GL_V$-action. Since they also preserve the stability condition, they
define a $\GL_W \times G_\cP$-action on $\mathfrak{L}(v,w)$.

We can use this action to define $\GL_W \times \C^*$-actions on
$\mathfrak{L}(v,w)$ as follows.  Suppose a function $m: \tilde Q_1 \to \Z$
is given such that $m(a) = -m(\bar a)$ for all $a \in \tilde Q_1$. Then the
map $a \mapsto z^{m(a)+1} a$, $z \in \C^*$, extends to an
automorphism of $\cP$ fixing $\cP_0$. We denote this automorphism by
$h_m (z)$.  Thus $h_m$ defines a group homomorphism $\C^* \to
G_\cP$. Then the homomorphism
\begin{equation} \label{eq:GC-to-GP-homom}
  \GL_W \times \C^* \to \GL_W \times G_\cP,\quad (g,z) \mapsto
  (z g, h_m (z))
\end{equation}
defines a $\GL_W \times \C^*$-action on $\mathfrak{L}(v,w)$ which we
denote by $\star_m$.

We give two important examples of this action (see
\cite[\S2.7]{Nak01b} and \cite{Nak04}).  First, for each pair $i,j
\in Q_0$ connected by at least one edge, let $b_{ij}$ denote the
number of arrows in $Q_1$ joining $i$ and $j$. We fix a numbering
$a_1, \dots, a_{b_{ij}}$ of these arrows, which induces a numbering
$\bar a_1, \dots, \bar a_{b_{ij}}$ of the corresponding arrows in
$\bar Q_1$. Define $m_1 : H \to \Z$ by
\[
  m_1(a_p) = b_{ij} + 1 - 2p,\quad m_1(\bar a_p) = -b_{ij}-1+2p.
\]
For the second action, we define $m_2(a)=0$ for all $a \in Q_1$.

\subsection{Demazure quiver varieties}

Let $\g$ be the Kac-Moody algebra corresponding to the underlying
graph of $Q$ (i.e.\ whose Dynkin diagram is this graph) and let
$\mathcal{W}$ be its Weyl group.  Recall that $\mathcal{W}$ acts
naturally on the weight lattice of $\g$. For $u \in \Z Q_0$, we
define elements of the weight and root lattice by
\[
  \omega_u = \sum_{i \in Q_0} u_i \omega_i,\quad \alpha_u = \sum_{i
  \in Q_0} u_i \alpha_i.
\]

\begin{PropDef}[{\cite[Proposition~5.1]{Sav04a}}]
\label{propdef:extremal-QV} The lagrangian Nakajima quiver variety
$\mathfrak{L}(v,w)$ is a point if and only if $\omega_w - \alpha_v =
\sigma (\omega_w)$ for some $\sigma \in \mathcal{W}$ (i.e.\ $\omega_w
- \alpha_v$ is an extremal weight of the irreducible representation of highest weight $\omega_w$, equivalently $v$ is $w$-extremal in the sense of Definition~\ref{def:w-extremal}). In this case, we let
$(x^{w,\sigma},t^{w,\sigma})$ be a representative (unique up to
isomorphism) of the $\GL_V$-orbit corresponding to this
point. So $\mathfrak{L}(v,w) = \{[x^{w,\sigma}, t^{w,\sigma}]\}$ when $\omega_w - \alpha_v = \sigma(\omega_w)$.
\end{PropDef}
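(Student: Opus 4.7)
The plan is to reduce the problem to two standard ingredients: the dimension formula for $\mathfrak{L}(v,w)$ and the classical norm characterization of extremal weights. By the usual computation (cutting $\Lambda(V)$ out of $\End_{\tilde Q} V$ by the moment-map equations, intersecting with the stable locus in $\Hom_{\cP_0}(V,W)$, and quotienting by the free $\Aut_{\cP_0} V$-action), whenever $\mathfrak{L}(v,w)$ is nonempty its dimension is
\[
    \dim \mathfrak{L}(v,w) = (\omega_w, \alpha_v) - \tfrac{1}{2}(\alpha_v, \alpha_v) = \tfrac{1}{2}\bigl((\omega_w, \omega_w) - (\omega_w - \alpha_v, \omega_w - \alpha_v)\bigr),
\]
where $(\cdot, \cdot)$ is the Weyl-invariant form on the weight lattice normalized so that $(\alpha_i, \alpha_i) = 2$ and $(\omega_i, \alpha_j) = \delta_{ij}$.

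For the forward direction, suppose $\mathfrak{L}(v,w)$ is a single point. Then it is nonempty and zero-dimensional. Non-emptiness forces $\omega_w - \alpha_v$ to be a weight of the integrable highest weight module $V(\omega_w)$, and $\dim = 0$ gives $(\omega_w - \alpha_v, \omega_w - \alpha_v) = (\omega_w, \omega_w)$. I would then invoke the classical fact that a weight of $V(\lambda)$ has the same norm as $\lambda$ if and only if it lies in the Weyl orbit $\mathcal{W} \cdot \lambda$; applied with $\lambda = \omega_w$ this yields $\omega_w - \alpha_v = \sigma(\omega_w)$ for some $\sigma \in \mathcal{W}$.

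For the backward direction, suppose $\omega_w - \alpha_v = \sigma(\omega_w)$. The dimension formula immediately gives $\dim \mathfrak{L}(v,w) = 0$; it remains to establish nonemptiness and uniqueness. Both I would handle via Nakajima's reflection functors: for each simple reflection $s_i \in \mathcal{W}$ there is a canonical bijection $\mathfrak{L}(v,w) \to \mathfrak{L}(v',w)$ with $\omega_w - \alpha_{v'} = s_i(\omega_w - \alpha_v)$. Writing $\sigma = s_{i_1} \cdots s_{i_k}$ in reduced form and iterating, I would reduce to the case $\omega_w - \alpha_{v^{(0)}} = \omega_w$, that is, $v^{(0)} = 0$, where $\mathfrak{L}(0,w)$ manifestly consists of the single orbit of $(0,0)$.

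The main obstacle is really the ``single point'' upgrade of the vanishing of the dimension: in principle a zero-dimensional variety could be a finite disjoint union of more than one point, so the dimension formula alone does not suffice. The reflection-functor approach dispatches this by transporting the problem to the trivial case $v = 0$. An alternative route, avoiding reflection functors, is to cite Nakajima's theorem that the top-dimensional irreducible components of $\mathfrak{L}(v,w)$ parametrize a basis of the weight space $V(\omega_w)_{\omega_w - \alpha_v}$: an extremal weight space is one-dimensional, so there is a unique top component, which together with $\dim = 0$ forces the variety to be a single point.
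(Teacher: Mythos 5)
Your proposal is essentially sound, but note that the paper does not prove this statement at all: Proposition/Definition~\ref{propdef:extremal-QV} is imported verbatim from \cite[Proposition~5.1]{Sav04a}, and the present paper only builds on it (e.g.\ in Proposition~\ref{prop:extremal}, where the remaining equivalences are deduced from Theorem~\ref{thm:QG=QV}). So you are supplying an argument where the authors supply a citation. Judged on its own terms, your second route is the right one and is close in spirit to the standard proof: the dimension formula $\dim \mathfrak{L}(v,w) = (\omega_w,\alpha_v)-\tfrac12(\alpha_v,\alpha_v)$ together with the fact that $\mathfrak{L}(v,w)$ is Lagrangian in $\mathfrak{M}(v,w)$, hence of pure middle dimension, plus Nakajima's theorem that the (all top-dimensional) irreducible components count the weight multiplicity of $\omega_w-\alpha_v$ in $L_{\omega_w}$, gives both directions at once: a single point forces nonemptiness (so $\omega_w-\alpha_v$ is a weight) and dimension zero (so norm equality, hence extremality by the classical characterization), while extremality forces multiplicity one and dimension zero, hence exactly one component which is a point. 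Your first route, via reflection functors, is the weak link as stated: the ``canonical bijection'' $\mathfrak{L}(v,w)\to\mathfrak{L}(s_i\cdot_w v,w)$ is not an off-the-shelf result, since Nakajima's and Maffei's reflection functors are constructed for deformation/stability parameters that are generic with respect to $i$, and their compatibility with the central fibre $\mathfrak{L}=\pi^{-1}(0)$ is exactly the delicate point; if you want to keep that route you would need to justify this compatibility, whereas the component-counting argument needs no such input. Also be explicit that purity (every component of $\mathfrak{L}$ has the middle dimension) is what lets you pass from ``single point'' to the vanishing of the dimension formula, and from ``one component of dimension zero'' to ``single point''; with that made precise, your write-up is a complete proof of the quoted statement.
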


\begin{Definition}[Demazure quiver variety] \label{def:DQV}
For $\sigma \in \mathcal{W}$ and $v,w \in \N Q_0$, let
$\mathfrak{L}_\sigma(v,w)$ be the subvariety consisting of all
$[x,t] \in \mathfrak{L}(v,w)$ such that $(x,t)$ is isomorphic to a
subrepresentation of $(x^{w,\sigma}, t^{w,\sigma})$. We call
$\mathfrak{L}_\sigma(v,w)$ a \emph{Demazure quiver variety}.
\end{Definition}

\begin{Remark}
It follows from the uniqueness assertion in Proposition/Definition~\ref{propdef:extremal-QV} that the $\GL_W \times G_\cP$-action on $\mathfrak{L}(v,w)$ fixes $\mathfrak{L}_\sigma(v,w)$ for all $\sigma \in \mathcal{W}$.  Thus we have an induced $\GL_W \times G_\cP$-action on the Demazure quiver varieties.
\end{Remark}


\section{Quiver varieties as quiver grassmannians}
\label{sec:QVs-and-QGs}

\subsection{Lagrangian Nakajima quiver varieties as quiver grassmannians} \label{LNasQG}

In this section we show that certain quiver grassmannians are
homeomorphic to the lagrangian Nakajima quiver varieties. We begin
with a key technical proposition.

\begin{Proposition} \label{prop:unique-injection}
  Suppose $A = \bigoplus_{n \ge 0} A_n$ is a graded algebra and $V$ is
  a locally nilpotent $A$-module.
  Furthermore, suppose $S$ is a
  semisimple locally nilpotent $A$-module with injective hull $E$.

  \begin{enumerate}
  \item \label{prop-item1:unique-injection}
  Let $\pi : E \to S$ be an $A_0$-linear retract for the canonical embedding $\iota : S \to E$ (that is, an $A_0$-linear map such that $\pi \iota = \id$) and let $\tau :
  V \to S$ be a homomorphism of $A_0$-modules.  Then
  there exists a unique $A$-module homomorphism $\gamma : V \to E$ such
  that the following diagram commutes:
  \[
    \xymatrix{
      & E \ar[d]^\pi \\
      V \ar[r]^\tau \ar[ur]^\gamma & S
    }
  \]
  Furthermore, the map $\gamma$ is injective if and only if $\tau|_{\socle V}$
  is injective.

  \item \label{prop-item2:unique-injection}
  Suppose $\pi_1, \pi_2 : E \to S$ are $A_0$-linear retracts for the canonical embedding $\iota : S \to E$.  Then there exists
  a unique $\gamma \in \Aut_A E$ such that
  $\pi_2 = \pi_1 \gamma$.  The map $\gamma$ fixes $S$ pointwise.
  Conversely, given an $A_0$-linear retract
  $\pi: E \to S$ and any $\gamma \in \Aut_A E$ fixing $S$
  pointwise, $\pi \gamma : E \to S$ is also a $A_0$-linear retract.
  \end{enumerate}
\end{Proposition}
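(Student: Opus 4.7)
The plan is to prove (i) first---uniqueness via essentiality of $S\subseteq E$, existence by induction along a socle filtration of $V$---and then to deduce (ii) from two applications of (i).

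For uniqueness in (i), suppose $\gamma_1,\gamma_2:V\to E$ are two $A$-linear lifts of $\tau$ through $\pi$. The difference $\gamma_1-\gamma_2$ is $A$-linear with image contained in $\ker\pi$; since $\pi$ restricts to the identity on $S$, this image meets $S$ trivially, and essentiality of $S$ in $E$ forces it to vanish.

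The hard part is existence in (i). I would filter $V$ by the submodules $V^{(n)}:=\{v\in V : A_{\ge n}\cdot v=0\}$. Since $A_{\ge n}$ is a two-sided ideal of $A$, each $V^{(n)}$ is an $A$-submodule; they form an ascending chain with $V^{(1)}=\socle V$ and $V=\bigcup_n V^{(n)}$ by local nilpotency, and each successive quotient $V^{(n+1)}/V^{(n)}$ is killed by $A_+$, hence semisimple. I would then build $A$-linear maps $\gamma^{(n)}:V^{(n)}\to E$ satisfying $\pi\gamma^{(n)}=\tau|_{V^{(n)}}$ and $\gamma^{(n+1)}|_{V^{(n)}}=\gamma^{(n)}$, and set $\gamma:=\varinjlim\gamma^{(n)}$. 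The case $n=1$ is automatic: both $V^{(1)}$ and $S$ are killed by $A_+$, so $\tau|_{V^{(1)}}$ is already $A$-linear and lands in $S\subseteq E$. For the inductive step, injectivity of $E$ furnishes an $A$-linear extension $\tilde\gamma:V^{(n+1)}\to E$ of $\gamma^{(n)}$; the discrepancy $\delta:=\tau|_{V^{(n+1)}}-\pi\tilde\gamma$ is an $A_0$-linear map into $S$ that vanishes on $V^{(n)}$, hence descends to the semisimple quotient $V^{(n+1)}/V^{(n)}$, where $A_0$-linearity automatically upgrades to $A$-linearity. Composing with $S\hookrightarrow E$ gives an $A$-linear correction $\epsilon$ with $\pi\epsilon=\delta$ and $\epsilon|_{V^{(n)}}=0$, and $\gamma^{(n+1)}:=\tilde\gamma+\epsilon$ completes the step. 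The injectivity criterion then follows: $\gamma$ sends $\socle V$ into $\socle E=S$, where $\pi=\id$, so $\gamma|_{\socle V}=\tau|_{\socle V}$; conversely, any nonzero submodule of the locally nilpotent module $V$ has nonzero socle, whence $\ker\gamma=0$ as soon as $\tau|_{\socle V}$ is injective.

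Part (ii) is then a formal consequence of (i). Apply (i) with $V=E$, $\tau=\pi_2$, and projection $\pi_1$ to obtain a unique $A$-linear $\gamma:E\to E$ with $\pi_1\gamma=\pi_2$. Since $\socle E=S$ and $\pi_2|_S=\id_S$ is injective, $\gamma$ is injective by the criterion in (i). Swapping the roles of $\pi_1$ and $\pi_2$ produces $\gamma'$ with $\pi_2\gamma'=\pi_1$; both $\gamma\gamma'$ and $\id_E$ are then $A$-linear lifts of $\pi_1$ through $\pi_1$, so uniqueness in (i) forces $\gamma\gamma'=\id_E$, and symmetrically $\gamma'\gamma=\id_E$, so $\gamma\in\Aut_A E$. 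That $\gamma$ fixes $S$ pointwise is one more use of uniqueness: $\gamma|_S$ and the inclusion $S\hookrightarrow E$ are both $A$-linear lifts of $\id_S$ through $\pi_1$, hence coincide. The converse is immediate, as for any $\gamma\in\Aut_A E$ fixing $S$ the composite $\pi\gamma$ is $A_0$-linear and restricts to $\id_S$ on $S$, hence is an $A_0$-projection.
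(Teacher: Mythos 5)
Your proof is correct, and its skeleton matches the paper's: both establish (i) by induction along the filtration $V^{(n)}=\{v\in V : A_{\ge n}\cdot v=0\}$, using injectivity of $E$ to extend and then correcting the extension, and both deduce (ii) by applying (i) with $V=E$ and invoking uniqueness to produce a two-sided inverse (so your reliance on the implicit standing facts $A_+\cdot S=0$, $\socle E=S$, and essentiality of socles in locally nilpotent modules is exactly the paper's reliance, not a new gap). The execution differs in two worthwhile ways. First, you prove uniqueness once and for all from essentiality: the difference of two lifts has image an $A$-submodule of $E$ meeting $S$ trivially, hence zero. The paper instead proves uniqueness step by step inside the induction (the difference on $V^{(k+1)}$ is killed by $A_+$, hence lies in $S$, then apply $\pi$), after first reducing to the case that $\tau|_{\socle V}$ is injective; your argument needs no reduction and not even local nilpotency of $V$. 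Second, your inductive correction is the discrepancy $\delta=\tau|_{V^{(n+1)}}-\pi\tilde\gamma$, which vanishes on $V^{(n)}$, descends to the $A_+$-trivial quotient $V^{(n+1)}/V^{(n)}$, and is therefore automatically $A$-linear as a map into $S$; the paper instead reduces further to $\tau|_{\socle V}$ being an isomorphism so that $V^{(k+1)}=\socle V\oplus\ker\tau_{k+1}$ as vector spaces, defines the corrected map piecewise on this decomposition, and verifies $A$-linearity by hand. Your route buys a shorter, reduction-free argument that makes the uniqueness mechanism (essentiality) transparent; the paper's buys an explicit formula for the lift on a concrete complement. You also spell out the ``fixes $S$ pointwise'' claim and the converse direction of (ii), which the paper's proof leaves implicit.
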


\begin{proof}
  Since $V$ is locally nilpotent, we have a filtration
  \[
    0 = V^{(0)} \subseteq V^{(1)} = \socle V \subseteq V^{(2)} \subseteq
    V^{(3)} \subseteq \dots
  \]
  of $V$ where $V^{(n)} = \{m \in V \ |\ A_{\ge n} \cdot m = 0\}$.  We prove by
  induction on $n$ that there exists a unique homomorphism $\gamma_n:
  V^{(n)} \to E$ such that the diagram
  \begin{equation} \label{eq:induction-diagram}
    \xymatrix{
      & E \ar[d]^\pi \\
      V^{(n)} \ar[r]^(0.6){\tau_n} \ar[ur]^{\gamma_n} & S
    }
  \end{equation}
  commutes, where $\tau_n = \tau|_{V^{(n)}}$.
  Since  $V^{(1)} = \socle V$ and $A_+ \cdot \socle V = 0$,
  we must have $\gamma_1(V^{(1)}) \subseteq S$ and
  so the unique choice for $\gamma_1$ is $\tau_1$.
  Suppose the statement holds for $n=k$.  Since $E$ is injective, there
  exists an $A$-module homomorphism $\hat \gamma_{k+1}$ such that the following diagram
  commutes:
  \[
    \xymatrix{
      V^{(k+1)} \ar[r]^{\hat \gamma_{k+1}} & E \\
      V^{(k)} \ar@{^(->}[u] \ar[ur]_{\gamma_k} &
    }
  \]
  Define $\gamma_{k+1}$ by
  \[
    \gamma_{k+1} = \hat \gamma_{k+1} - \pi \circ \hat \gamma_{k+1}
    + \tau.
  \]
  It is then clear that the diagram \eqref{eq:induction-diagram}
  commutes (with $n=k+1$).  Note also that $\gamma_{k+1}|_{V^{(k)}} = \gamma_k$.
  We claim that $\gamma_{k+1}$ is a
  homomorphism of $A$-modules.  Since it is an $A_0$-module homomorphism
  by definition, it
  suffices to show it commutes with the action of $A_+$.

  For $r \in A_+$ and $m
  \in V^{(k+1)}$, we have $r \cdot m \in V^{(k)}$.  Also, $A_+ \cdot S = 0$.
  Then
  \begin{align*}
    r \cdot \gamma_{k+1}(m) &= r \cdot (\hat \gamma_{k+1}(m) - \pi \circ \hat \gamma_{k+1}(m)
    + \tau(m)) \\
    &= r \cdot \hat \gamma_{k+1}(m)
    = \hat \gamma_{k+1}(r \cdot m)
    = \gamma_{k}(r \cdot m) \\
    &= \gamma_{k+1}(r \cdot m)
  \end{align*}
  as desired.

  Now suppose that $\gamma_{k+1}'$ is another $\cP$-module
  homomorphism making
  \eqref{eq:induction-diagram} commute (with $n=k+1$).
  By the inductive hypothesis, we have
  $\gamma_{k+1}|_{V^{(k)}} = \gamma_{k+1}'|_{V^{(k)}}$. For all
  $r \in A_+$ and $m \in V^{(k+1)}$, we have
  \[
    r \cdot \gamma_{k+1} (m) = \gamma_{k+1} (r \cdot m) =
    \gamma_{k+1}'(r \cdot m) = r \cdot \gamma_{k+1}' (m).
  \]
  Thus $\gamma_{k+1}(m) - \gamma_{k+1}'(m)$ lies in $S$.  Therefore
  \[
    \gamma_{k+1}(m) - \gamma_{k+1}'(m) = \pi(\gamma_{k+1}(m) -
    \gamma_{k+1}'(m)) = \pi(\gamma_{k+1}(m)) - \pi(\gamma_{k+1}'(m))
    = \tau(m) - \tau(m) = 0.
  \]
  The induction is
  complete and we obtain the desired map $\gamma$ by taking the
  limit.

  Note that $\gamma|_{\socle V}
  =\tau|_{\socle V}$.  Since a homomorphism of modules is injective
  if and only if its restriction to the socle is injective, it
  follows that $\gamma$ is injective if and only if $\tau|_{\socle
  V}$ is injective.

  We now prove \eqref{prop-item2:unique-injection}.
  By \eqref{prop-item1:unique-injection}, there exists a unique
  $A$-module homomorphism $\gamma : E \to E$ such that $\pi_2 =
  \pi_1 \gamma$.  Similarly, there exists a unique $A$-module
  automorphism $\tilde \gamma : E \to E$ such that $\pi_1 = \pi_2
  \tilde \gamma$ and $\gamma \tilde \gamma = \tilde \gamma \gamma =
  \id$ by the uniqueness assertion in
  \eqref{prop-item1:unique-injection}.  Thus $\gamma$ is an
  $A$-automorphism of $E$.  The converse statement is trivial.
\end{proof}

\begin{Remark}
The retract $\pi : E \to S$ in Proposition~\ref{prop:unique-injection}
is equivalent to choosing an $A_0$-module decomposition $E = S
\oplus T$.  The second part of the proposition states that any two
such decompositions are related by a unique $A$-module automorphism
of $E$ fixing $S$.
\end{Remark}

\begin{Definition}
Let $V$ be a $\mathcal{P}_0$-module of graded dimension $v$. We
define $\widehat {\NGr}_\cP(v,q^w)$ to be the set of injective
$\mathcal{P}_0$-module homomorphisms $\gamma : V \to q^w$ whose
image is a nilpotent $\mathcal{P}$-submodule of $q^w$.
\end{Definition}

\begin{Theorem} \label{thm:QG=QV}
Fix $v,w \in \N Q_0$.  Then $\widehat {\NGr}_\cP(v,q^w)$ and $\NGr_\cP(v,q^w)$ are naturally algebraic varieties.  Furthemore, there is a bijective $\GL_V$-equivariant algebraic map from $\widehat {\NGr}_\cP(v,q^w)$ to $\Lambda(v,w)^\st$ and a bijective algebraic map from
$\NGr_\cP(v,q^w)$ to $\mathfrak{L}(v,w)$. In particular, $\widehat
{\NGr}_\cP(v,q^w)$ is homeomorphic to $\Lambda(v,w)^\st$ and
$\NGr_\cP(v,q^w)$ is homeomorphic to $\mathfrak{L}(v,w)$.
\end{Theorem}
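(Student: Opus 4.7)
The plan is to fix a $\cP_0$-module projection $\pi : q^w \to s^w$ (which exists by choosing any $\cP_0$-complement of $s^w$ in $q^w$) and to transport $\cP$-module structures across the $\cP_0$-isomorphism $\gamma : V \to \gamma(V)$. Given $\gamma \in \widehat{\Gr}_\cP(v,q^w)$, I would pull the $\cP$-structure on $\gamma(V) \subseteq q^w$ back to $V$; the result is an $x \in \End_{\tilde Q} V$ satisfying the preprojective relations, and $x$ is nilpotent by Lemma~\ref{lem:injectives-are-nilpotent} applied to the finite-dimensional submodule $\gamma(V)$. Setting $t = \pi \circ \gamma$ produces a pair $(x,t) \in \Lambda(V,W)$. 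Stability then follows from the essentiality of the extension $s^w \hookrightarrow q^w$: if $U \subseteq V$ is a nontrivial $x$-invariant $\cP_0$-submodule contained in $\ker t$, then $\gamma(U)$ is a nonzero $\cP$-submodule of $q^w$ contained in $\ker \pi$, but $\ker \pi \cap s^w = 0$, contradicting essentiality. This defines the forward map $\Phi : \widehat{\Gr}_\cP(v,q^w) \to \Lambda(v,w)^\st$.

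For the inverse, given $(x,t) \in \Lambda(v,w)^\st$, I would regard $V$ as a nilpotent (hence locally nilpotent) $\cP$-module via $x$ and apply Proposition~\ref{prop:unique-injection}(i) with $A = \cP$, $S = s^w$, $E = q^w$, and $\tau = t$, obtaining a unique $\cP$-module homomorphism $\gamma : V \to q^w$ with $\pi \circ \gamma = t$. Stability of $(x,t)$ forces the $\cP_0$-submodule $\socle V \cap \ker t$, which is automatically $x$-invariant since $x$ annihilates $\socle V$, to vanish; hence $t|_{\socle V}$ is injective, and the second assertion of Proposition~\ref{prop:unique-injection}(i) yields injectivity of $\gamma$. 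The uniqueness clause in the same proposition then forces $\Phi \circ \Psi = \id$ and $\Psi \circ \Phi = \id$.

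For $\Aut_{\cP_0} V$-equivariance, I would let $\Aut_{\cP_0} V$ act on $\widehat{\Gr}_\cP(v,q^w)$ by $h \cdot \gamma = \gamma \circ h^{-1}$; a direct check shows $\Phi(\gamma \circ h^{-1}) = (h \cdot x, t \circ h^{-1})$, matching the standard action on $\Lambda(v,w)^\st$. Since two elements of $\widehat{\Gr}_\cP(v,q^w)$ share the same image exactly when they differ by a (unique) element of $\Aut_{\cP_0} V$, the quotient $\widehat{\Gr}_\cP(v,q^w)/\Aut_{\cP_0} V$ is canonically $\Gr_\cP(v,q^w)$, and $\Phi$ descends to the desired bijection onto $\mathfrak{L}(v,w)$.

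The main obstacle is verifying algebraicity when $q^w$ is infinite-dimensional. The key observation is that every $v$-dimensional nilpotent $\cP$-submodule of $q^w$ is annihilated by $\cP_{\ge N}$ for a uniform integer $N$ depending only on $\dim_\C V$, so all such submodules lie inside a fixed $\cP_0$-submodule of $q^w$ which, in the cases where $q^w$ is locally nilpotent (Proposition~\ref{prop:injective-ln-condition}), is finite-dimensional. On this ambient finite-dimensional space, $\Phi$ is polynomial in the matrix entries and $\Psi$ is polynomial via the inductive formula in the proof of Proposition~\ref{prop:unique-injection}(i); the $\Aut_{\cP_0} V$-equivariance then ensures the induced maps on the quotients are homeomorphisms in the Zariski topology.
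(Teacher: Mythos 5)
Your construction of the bijection itself is essentially the paper's: transport the $\cP$-structure of $\gamma(V)\subseteq q^w$ back to $V$ and set $t=\pi\gamma$ for the forward map, use essentiality of $s^w\subseteq q^w$ for stability, and invert by applying Proposition~\ref{prop:unique-injection}\eqref{prop-item1:unique-injection} with $A=\cP$, $S=s^w$, $E=q^w$, $\tau=t$, with the uniqueness clause giving mutual inverses and $\Aut_{\cP_0}V$-equivariance giving the descent to $\Gr_\cP(v,q^w)\to\mathfrak{L}(v,w)$. That part is correct.

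The gap is in your last paragraph, i.e.\ in the passage from ``bijective'' to ``homeomorphism.'' You assert that the inverse map $\Psi$ is polynomial ``via the inductive formula in the proof of Proposition~\ref{prop:unique-injection}(i),'' but that induction is not a formula: at each stage the extension $\hat\gamma_{k+1}$ of $\gamma_k$ is produced by an abstract appeal to injectivity of $q^w$ (an existence statement with a non-canonical choice), so while the limiting $\gamma$ is unique, nothing in that argument shows it depends algebraically, or even continuously, on $(x,t)$. Without continuity of the inverse, a continuous (or algebraic) bijection is not yet a homeomorphism, and your closing appeal to equivariance to get homeomorphisms on the quotients does not repair this. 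The paper avoids the issue entirely: it only shows the \emph{forward} map is algebraic, then uses that $\Gr_\cP(v,q^w)$ and $\mathfrak{L}(v,w)$ are projective, so the image of a closed set under the algebraic bijection $\bar\iota$ is closed, hence $\bar\iota^{-1}$ is continuous and $\bar\iota$ is a homeomorphism; the statement for $\widehat\Gr_\cP(v,q^w)\cong\Lambda(v,w)^{\st}$ is then recovered from the principal $\Aut_{\cP_0}V$-bundle structure over these quotients. (One could instead make $\Psi$ algebraic by observing that $\gamma$ is the unique solution of a linear system whose coefficients depend algebraically on $(x,t)$ and whose homogeneous part has constant rank, but some such argument must be supplied; the inductive proof does not do it.) A secondary, fixable point: your reduction to a finite-dimensional ambient space needs no local nilpotency hypothesis on $q^w$ and hence no appeal to Proposition~\ref{prop:injective-ln-condition} --- as in the proof of Lemma~\ref{lem:loc-nil-contained-in-tqw}, the subspace $(q^w)^{(N)}=\{y\in q^w \mid \cP_{\ge N}\cdot y=0\}$ is finite-dimensional for every quiver, so your argument as written would otherwise leave the wild-type case uncovered.
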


The proof of Theorem~\ref{thm:QG=QV} will be given at the end of Section~\ref{subsec:demazure-QG}.

\begin{Remark} \label{rem:Lusztig-conversation}
Lusztig \cite{Lus98,Lus00b} has described a canonical bijection
between the lagrangian Nakajima quiver varieties and grassmannian
type varieties inside the projective modules $p^w$ (see
Section~\ref{sec:injective-versus-projective}). In several places in
the literature, it was claimed that the varieties defined by Lusztig
are isomorphic (as algebraic varieties) to the lagrangian Nakajima
quiver varieties.  However, the authors were not aware of a proof
existing in the literature.  Most references for this statement were
to Lusztig's papers \cite{Lus98,Lus00b}, where the points of the two
varieties are shown to be in canonical bijection (similar to the
situation in the current paper).  Lusztig informed the authors
that he was not aware of a proof that the varieties are isomorphic.
After the appearance of an earlier version of the current paper \cite{ST09}, Shipman~\cite{Shi10} proved that the varieties are indeed isomorphic.  From now on, we will incorporate Shipman's work as it allows us to strengthen several results; in particular (see Corollary~\ref{cor:QG=QV-variety-map} in the Appendix) the map $\bar \iota$ in the proof below is an isomorphism of algebraic varieties.
\end{Remark}

\begin{Remark} \mbox{}
\begin{enumerate}
  \item The role of the retract $\pi$ in
  Proposition~\ref{prop:unique-injection} is to ensure the
  uniqueness of
  $\gamma$.

  \item When $Q$ is of finite type, the injective module $q^w$ is also projective (see
  Proposition~\ref{prop:injective=projective}) and thus   Theorem~\ref{thm:QG=QV} follows from \cite[\S2.1]{Lus00b}.

  \item The isomorphisms of Theorem~\ref{thm:QG=QV} depend on
  the choice of retract $\pi : q^w \to s^w$.  By
  Proposition~\ref{prop:unique-injection}\eqref{prop-item2:unique-injection},
  isomorphisms coming from different retracts are related by
  an automorphism of $q^w$ fixing $s^w$.

  \item In Lusztig's grassmannian type realization of the lagrangian
  Nakajima quiver varieties \cite{Lus98,Lus00b}, one must require
  that the submodules contain all paths of large enough length (this
  corresponds to the nilpotency condition in the definition of the quiver
  varieties).  In the current approach using injective
  modules, this corresponds to requiring that submodules be nilpotent.
\end{enumerate}
\end{Remark}

\subsection{Demazure quiver grassmannians} \label{subsec:demazure-QG}

As before, let $\g$ be the Kac-Moody algebra corresponding to the
underlying graph of $Q$ and let $\mathcal{W}$ be its Weyl group with
Bruhat order $\preceq$.

\begin{Definition} \label{def:w-extremal}
  For each $w \in \N Q_0$, we define an action of $\mathcal{W}$ on $\Z Q_0$
  as follows.  For $v \in \Z Q_0$ and $\sigma \in \mathcal{W}$, define
  $\sigma \cdot_w v = u$ where $u$ is the unique element of $\Z
  Q_0$ satisfying
  \[
    \sigma (\omega_w - \alpha_v) = \omega_w - \alpha_u.
  \]
  We say that $v \in \N Q_0$ is \emph{$w$-extremal} if $v \in \mathcal{W}
  \cdot_w 0$.
\end{Definition}

\begin{Lemma}
  If $v,w \in \N Q_0$ and $\omega_w - \alpha_v$ is a weight
  of the irreducible highest weight representation of $\g$ of
  highest weight $\omega_w$
  (i.e the corresponding weight space is nonzero), then $\sigma \cdot_w v \in \N Q_0$
  for all $\sigma \in \mathcal{W}$.  In particular $\mathcal{W}
  \cdot_w 0 \subseteq \N Q_0$.
\end{Lemma}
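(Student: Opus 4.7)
The plan is to use the Weyl-group invariance of the set of weights of an integrable highest weight representation, combined with the fact that every weight of $L(\omega_w)$ lies in $\omega_w - \sum_i \N \alpha_i$.

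First I would recall (or invoke as standard Kac-Moody theory) two facts. (a) The set of weights of the integrable irreducible highest weight module $L(\omega_w)$ of highest weight $\omega_w$ is stable under the Weyl group: if $\mu$ is a weight of $L(\omega_w)$ and $\sigma \in \mathcal{W}$, then $\sigma(\mu)$ is again a weight, with the same multiplicity. (b) Every weight $\mu$ of $L(\omega_w)$ has the form $\mu = \omega_w - \sum_{i \in Q_0} n_i \alpha_i$ with all $n_i \in \N$, because $L(\omega_w)$ is generated from a highest weight vector by successively applying the lowering operators $f_i$, each of which lowers the weight by $\alpha_i$.

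Now assume $\omega_w - \alpha_v$ is a weight of $L(\omega_w)$ and let $\sigma \in \mathcal{W}$. By (a), the element $\sigma(\omega_w - \alpha_v)$ is also a weight of $L(\omega_w)$. By (b), we can write
\[
  \sigma(\omega_w - \alpha_v) = \omega_w - \sum_{i \in Q_0} n_i \alpha_i
\]
for some nonnegative integers $n_i \in \N$. Since the simple roots $\{\alpha_i\}_{i \in Q_0}$ of a Kac-Moody algebra are linearly independent, the tuple $(n_i)_{i \in Q_0}$ is uniquely determined by this equation. Setting $u = \sum_i n_i \, i \in \N Q_0$, we have $\sigma(\omega_w - \alpha_v) = \omega_w - \alpha_u$, so by the defining property of the $w$-shifted action we conclude $\sigma \cdot_w v = u \in \N Q_0$.

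The final assertion $\mathcal{W} \cdot_w 0 \subseteq \N Q_0$ is the special case $v = 0$: here $\omega_w - \alpha_0 = \omega_w$ is certainly a weight of $L(\omega_w)$ (the highest weight itself), so the general statement applies. The only subtle point, and the one to flag, is the appeal to linear independence of the simple roots in the Kac-Moody setting; everything else is a direct translation between the weight-lattice statement about $\mathcal{W}$-stability of the weights of $L(\omega_w)$ and the combinatorial definition of $\cdot_w$.
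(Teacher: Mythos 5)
Your proof is correct and follows essentially the same route as the paper's: the paper's one-line proof invokes exactly the $\mathcal{W}$-invariance of the weights of $L(\omega_w)$ (with multiplicities), and your argument just spells out the remaining standard details, namely that every weight lies in $\omega_w - \sum_i \N\alpha_i$ and that the simple roots are linearly independent, so the resulting $u$ is well defined and lies in $\N Q_0$.
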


\begin{proof}
  This follows easily from the fact that $\mathcal{W}$ acts on the
  weights of highest weight irreducible representations and the weight multiplicities
  are invariant under this action.
\end{proof}

\begin{Proposition} \label{prop:extremal}
  For $v \in \N Q_0$, the following statements are equivalent:
  \begin{enumerate}
    \item \label{extremal-item1} $v$ is $w$-extremal,
    \item \label{extremal-item2} $\mathfrak{L}(v,w)$ consists of a single point,
    \item \label{extremal-item3} $\NGr_\cP(v,q^w)$ consists of a single point, and
    \item \label{extremal-item4} there is a unique nilpotent submodule of $q^w$ of graded
    dimension $v$.
  \end{enumerate}
\end{Proposition}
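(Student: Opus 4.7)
The plan is to establish the four equivalences as a short chain, leveraging the results already proved in the paper; essentially no new work is required beyond unpacking definitions.

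First I would handle \eqref{extremal-item3} $\iff$ \eqref{extremal-item4}, which is immediate from Definition~\ref{def:QG}: by construction $\Gr_\cP(v,q^w)$ is (as a set) precisely the collection of $\cP$-submodules of $q^w$ of graded dimension $v$, so it consists of a single point exactly when there is a unique such submodule.

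Next I would establish \eqref{extremal-item2} $\iff$ \eqref{extremal-item3} by invoking Theorem~\ref{thm:QG=QV}, which produces a homeomorphism (in particular a bijection) between $\Gr_\cP(v,q^w)$ and $\mathfrak{L}(v,w)$. A bijection carries one-point sets to one-point sets, so the two conditions are equivalent.

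Finally, for \eqref{extremal-item1} $\iff$ \eqref{extremal-item2}, I would unwind the definition of the $\mathcal{W}$-action $\cdot_w$. By Proposition/Definition~\ref{propdef:extremal-QV}, $\mathfrak{L}(v,w)$ is a single point if and only if there exists $\sigma \in \mathcal{W}$ with $\omega_w - \alpha_v = \sigma(\omega_w)$. Since $\alpha_0 = 0$, the defining equation $\sigma(\omega_w - \alpha_0) = \omega_w - \alpha_{\sigma \cdot_w 0}$ reads $\sigma(\omega_w) = \omega_w - \alpha_{\sigma \cdot_w 0}$, so the condition in Proposition/Definition~\ref{propdef:extremal-QV} is exactly $v = \sigma \cdot_w 0$ for some $\sigma \in \mathcal{W}$, i.e.\ $v \in \mathcal{W} \cdot_w 0$, which is the definition of $w$-extremality.

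Because every step is either a direct appeal to a prior theorem or a translation of definitions, there is no genuine obstacle; the only thing to be careful about is that the uniqueness of $u \in \Z Q_0$ satisfying $\sigma(\omega_w - \alpha_v) = \omega_w - \alpha_u$ (used to make $\cdot_w$ well-defined) is needed so that the correspondence between $\sigma$'s in Proposition/Definition~\ref{propdef:extremal-QV} and the elements of $\mathcal{W} \cdot_w 0$ is unambiguous.
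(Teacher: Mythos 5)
Your proposal is correct and follows essentially the same route as the paper: the equivalence of \eqref{extremal-item1} and \eqref{extremal-item2} is exactly the content of Proposition/Definition~\ref{propdef:extremal-QV} (quoted from \cite[Proposition~5.1]{Sav04a}), and the equivalence of \eqref{extremal-item2}, \eqref{extremal-item3} and \eqref{extremal-item4} follows from Theorem~\ref{thm:QG=QV} together with the definition of the quiver grassmannian, just as in the paper. Your extra remark spelling out that $\sigma \cdot_w 0$ recovers the condition $\omega_w - \alpha_v = \sigma(\omega_w)$ is a harmless (and correct) unpacking of the definition.
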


\begin{proof}
  The equivalence of \eqref{extremal-item1} and
  \eqref{extremal-item2} is given in \cite[Proposition~5.1]{Sav04a}.
  The equivalence of \eqref{extremal-item2}, \eqref{extremal-item3}
  and \eqref{extremal-item4} follows from
  Theorem~\ref{thm:QG=QV}.
\end{proof}

\begin{Definition}[Demazure quiver grassmannian] \label{def:DQG}
  For $\sigma \in \mathcal{W}$, we let $q^{w,\sigma}$
  denote the unique nilpotent submodule of $q^w$ of graded dimension $\sigma \cdot_w
  0$.  We call $\Gr_\cP(v,q^{w,\sigma})$ a \emph{Demazure quiver
  grassmannian}.
\end{Definition}

\begin{Proposition} \label{prop:Demazure-nesting}
  If $\sigma_1, \sigma_2 \in \mathcal{W}$ with $\sigma_1 \preceq \sigma_2$,
  then $q^{w,\sigma_2}$
  has a unique submodule of graded dimension $\sigma_1 \cdot_w 0$ and this
  submodule is isomorphic to $q^{w,\sigma_1}$.
\end{Proposition}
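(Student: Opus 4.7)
Uniqueness is immediate. Since $q^{w,\sigma_2}$ is a submodule of $q^w$, any submodule $N$ of $q^{w,\sigma_2}$ of graded dimension $\sigma_1\cdot_w 0$ is also a submodule of $q^w$ of that graded dimension; by Proposition~\ref{prop:extremal}(iv) the only such submodule is $q^{w,\sigma_1}$, so $N=q^{w,\sigma_1}$, which also gives the claimed isomorphism. The content of the proposition thus reduces to the containment $q^{w,\sigma_1}\subseteq q^{w,\sigma_2}$.

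My plan for this containment is to transfer it from the Nakajima-quiver-variety side through the identification $\bar\iota:\Gr_\cP(v,q^w)\to\mathfrak{L}(v,w)$ of Theorem~\ref{thm:QG=QV}. By induction on the length of a Bruhat chain from $\sigma_1$ to $\sigma_2$ it suffices to handle a single Bruhat cover $\sigma_1\lessdot\sigma_2$. In this case I would invoke the Bruhat-order nesting of extremal Demazure representatives from \cite{Sav04a}, namely that whenever $\sigma_1\preceq\sigma_2$ there is a $\cP$-module embedding $\phi:V^{\sigma_1}\hookrightarrow V^{\sigma_2}$ intertwining $x^{w,\sigma_1}$ with $x^{w,\sigma_2}$ and satisfying $t^{w,\sigma_2}\circ\phi=t^{w,\sigma_1}$ (where $V^{\sigma_j}$ denotes the underlying $\cP_0$-module of $(x^{w,\sigma_j},t^{w,\sigma_j})$).

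To execute the transfer, I would fix a $\cP_0$-module projection $\pi:q^w\to s^w$ and, for $j=1,2$, let $\gamma_{\sigma_j}:V^{\sigma_j}\hookrightarrow q^w$ be the embedding with image $q^{w,\sigma_j}$ corresponding under Theorem~\ref{thm:QG=QV} to $(x^{w,\sigma_j},t^{w,\sigma_j})$, so that $\pi\circ\gamma_{\sigma_j}=t^{w,\sigma_j}$. The composite $\gamma_{\sigma_2}\circ\phi:V^{\sigma_1}\to q^w$ is then a $\cP$-module embedding satisfying $\pi\circ(\gamma_{\sigma_2}\circ\phi)=t^{w,\sigma_1}$. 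Since $V^{\sigma_1}$ is finite-dimensional and hence locally nilpotent (Lemma~\ref{lem:injectives-are-nilpotent}), the uniqueness clause of Proposition~\ref{prop:unique-injection}(i) forces $\gamma_{\sigma_2}\circ\phi=\gamma_{\sigma_1}$, and taking images yields $q^{w,\sigma_1}\subseteq q^{w,\sigma_2}$. The main obstacle lies entirely in the input from \cite{Sav04a}, the Bruhat-nesting of the extremal representatives $(x^{w,\sigma},t^{w,\sigma})$; once that is granted, the remainder is a clean consequence of the universal property of injective hulls recorded in Proposition~\ref{prop:unique-injection}(i) together with the uniqueness of extremal submodules of $q^w$ from Proposition~\ref{prop:extremal}(iv).
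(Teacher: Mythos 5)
Your proposal is correct and follows essentially the same route as the paper: the containment $q^{w,\sigma_1}\subseteq q^{w,\sigma_2}$ is obtained from the Bruhat-compatible nesting of the extremal data established in \cite{Sav04a} (the paper packages this as the inclusion of Demazure modules $L_{\omega_w,\sigma_1}\subseteq L_{\omega_w,\sigma_2}$ together with \cite[Theorem~7.1]{Sav04a}), and uniqueness plus the isomorphism follow from Proposition~\ref{prop:extremal} exactly as you argue. Your explicit transfer through Proposition~\ref{prop:unique-injection} (forcing $\gamma_{\sigma_2}\circ\phi=\gamma_{\sigma_1}$) just spells out what the paper leaves implicit, and the reduction to Bruhat covers is unnecessary but harmless.
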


\begin{proof}
  Since $\sigma_1 \preceq \sigma_2$, we have $L_{\omega_w,\sigma_1}
  \subseteq L_{\omega_w,\sigma_2}$, where $L_{\omega_w,\sigma_i}$ is the
  Demazure module corresponding to $L_{\omega_w}$ (the irreducible integrable
  highest weight $\g$-module with highest weight $\omega_w$) and $\sigma_i$.
  It then follows from \cite[Theorem~7.1]{Sav04a} that
  $q^{w,\sigma_1}$ is (isomorphic to) a submodule of
  $q^{w,\sigma_2}$.
  Since any submodule of $q^{w,\sigma_2}$ is also a submodule of $q^w$,
  uniqueness follows immediately from
  Proposition~\ref{prop:extremal}.
\end{proof}

\begin{Proposition} \label{prop:DQG=DQV}
  Fix $\sigma \in \mathcal{W}$ and $v,w \in \N Q_0$.  Then
  $\Gr_\cP(v,q^{w,\sigma})$ is isomorphic (as an algebraic variety) to the Demazure quiver
  variety $\mathfrak{L}_\sigma(v,w)$.
\end{Proposition}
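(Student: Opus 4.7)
The plan is to lift the homeomorphism of Theorem~\ref{thm:QG=QV} and show it restricts correctly. By Theorem~\ref{thm:QG=QV} we have a homeomorphism $\bar\iota : \Gr_\cP(v, q^w) \to \mathfrak{L}(v,w)$. Now $\Gr_\cP(v, q^{w,\sigma})$ is a closed subvariety of $\Gr_\cP(v, q^w)$, since the condition $U \subseteq q^{w,\sigma}$ on a $v$-dimensional subspace $U$ of $q^w$ is closed on the ambient Grassmannian. Therefore, once we verify that $\bar\iota$ carries $\Gr_\cP(v, q^{w,\sigma})$ bijectively onto $\mathfrak{L}_\sigma(v,w)$, the restriction is automatically a homeomorphism (with the subspace topology on $\mathfrak{L}_\sigma(v,w) \subseteq \mathfrak{L}(v,w)$).

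To set up the bijection, I would fix once and for all a projection $\pi : q^w \to s^w$ and a $\cP_0$-module $V^\sigma$ of graded dimension $\sigma \cdot_w 0$. Since $q^{w,\sigma}$ is the unique submodule of $q^w$ of this extremal dimension (Proposition~\ref{prop:extremal}), Theorem~\ref{thm:QG=QV} shows that some isomorphism $\gamma^{w,\sigma} : V^\sigma \to q^{w,\sigma}$ pulls back the $\cP$-structure of $q^{w,\sigma}$ and the map $\pi|_{q^{w,\sigma}}$ to a representative of the class $[x^{w,\sigma}, t^{w,\sigma}]$; after adjusting by an element of $\Aut_{\cP_0} V^\sigma$ I may assume this representative is exactly $(x^{w,\sigma}, t^{w,\sigma})$.

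For the forward inclusion, take $U \in \Gr_\cP(v, q^{w,\sigma})$ with corresponding embedding $\gamma : V \to q^w$, so that $\bar\iota(U) = [x,t]$ with $x$ pulled back from $q^w$ along $\gamma$ and $t = \pi \circ \gamma$. Since $U \subseteq q^{w,\sigma}$, the map $\gamma$ factors through $\gamma^{w,\sigma}$, giving a $\cP_0$-injection $\phi = (\gamma^{w,\sigma})^{-1} \circ \gamma : V \to V^\sigma$ that intertwines $x$ with $x^{w,\sigma}$ and satisfies $t^{w,\sigma} \circ \phi = t$. Thus $(x,t)$ is isomorphic to the subrepresentation on $\phi(V) \subseteq V^\sigma$, so $[x,t] \in \mathfrak{L}_\sigma(v,w)$. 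Conversely, if $[x,t] \in \mathfrak{L}_\sigma(v,w)$ then we obtain some $\cP$-embedding $\phi : V \hookrightarrow V^\sigma$ with $t^{w,\sigma} \circ \phi = t$, and the composition $\gamma^{w,\sigma} \circ \phi : V \to q^w$ is a $\cP$-embedding into $q^{w,\sigma} \subseteq q^w$ satisfying $\pi \circ (\gamma^{w,\sigma} \circ \phi) = t$. By the uniqueness clause of Proposition~\ref{prop:unique-injection}(i), this is the embedding that $\bar\iota^{-1}$ associates to $[x,t]$, so $\bar\iota^{-1}([x,t])$ lies in $\Gr_\cP(v, q^{w,\sigma})$.

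The only real subtlety I foresee is this last invocation of uniqueness: the definition of $\mathfrak{L}_\sigma(v,w)$ only supplies $(x,t)$ up to $\Aut_{\cP_0}V$-action and the subrepresentation only up to isomorphism, so one must check that the particular embedding $\phi$ yielding a given representative of $[x,t]$ is the one compatible with the fixed $\pi$. Proposition~\ref{prop:unique-injection}(i) handles this because any two $\cP$-embeddings $V \to q^w$ inducing the same $(x,t)$ (equivalently, the same composition with $\pi$) must coincide; all choices of representatives and of $\phi$ therefore produce the same submodule of $q^{w,\sigma}$, and the bijection is well defined on $\Aut_{\cP_0}V$-orbits. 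Once this is verified, the homeomorphism statement follows from the restriction argument of the first paragraph.
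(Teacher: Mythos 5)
Your proposal is correct and follows the same route as the paper: the paper's proof simply states that the result is immediate from Definitions~\ref{def:DQV} and~\ref{def:DQG} together with the homeomorphism of Theorem~\ref{thm:QG=QV}, and your argument just writes out those details (restriction of the homeomorphism, plus the check via the uniqueness clause of Proposition~\ref{prop:unique-injection} that submodules of $q^{w,\sigma}$ correspond exactly to classes $[x,t]$ isomorphic to subrepresentations of $(x^{w,\sigma},t^{w,\sigma})$). The added care about well-definedness on $\Aut_{\cP_0}V$-orbits is sound and consistent with the paper's intent.
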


\begin{proof}
  This follows immediately from Definitions~\ref{def:DQV} and~\ref{def:DQG}
  and the description of the homeomorphism $\NGr_\cP(v,q^w) \cong
  \mathfrak{L}(v,w)$ given in Theorem~\ref{thm:QG=QV}, which is actually an
  isomorphism of algebraic varieties by Corollary~\ref{cor:QG=QV-variety-map}.
\end{proof}

\begin{Remark}
  Note that if $Q$ is a quiver of finite type and $\sigma_0$
  is the longest element of $\mathcal{W}$, then
  $\mathfrak{L}_{\sigma_0}(v,w) = \mathfrak{L}(v,w)$ and
  $\Gr(v,q^{w,\sigma_0}) = \Gr(v,q^w)$ for all $v, w \in
  \N Q_0$.
\end{Remark}

The $(q^{w,\sigma})_{\sigma \in \mathcal{W}}$ form a directed system
under the Bruhat order.  Let $\tilde q^w$ be the direct limit of
this system.

\begin{Lemma} \label{lem:loc-nil-contained-in-tqw}
Any locally nilpotent submodule $V$ of $q^w$ is contained in $\tilde
q^w$.
\end{Lemma}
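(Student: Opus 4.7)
The plan has three movements: reduce to the finite-dimensional case, translate the containment problem into a statement about quiver varieties via the results of Sections 4--5, and appeal to the exhaustion of integrable highest weight modules by Demazure submodules.

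First, since $V$ is locally nilpotent, each $v \in V$ generates a finite-dimensional submodule $\cP \cdot v$, so $V$ is the directed union of its finite-dimensional submodules. Each $q^{w,\sigma}$ is a submodule of $q^w$, and by Proposition~\ref{prop:Demazure-nesting} the family $(q^{w,\sigma})_{\sigma \in \mathcal{W}}$ is directed under inclusion, so the direct limit $\tilde q^w$ is identified with $\bigcup_{\sigma \in \mathcal{W}} q^{w,\sigma}$ inside $q^w$. It therefore suffices to show that every finite-dimensional submodule $U \subseteq q^w$ is contained in $q^{w,\sigma}$ for some $\sigma \in \mathcal{W}$.

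Next I would translate this into the language of quiver varieties. By Lemma~\ref{lem:injectives-are-nilpotent}, such a $U$ is nilpotent; let $v = \dim_{Q_0} U$, so $U$ is a point of $\Gr_\cP(v,q^w)$. Under the homeomorphism $\Gr_\cP(v,q^w) \cong \mathfrak{L}(v,w)$ of Theorem~\ref{thm:QG=QV}, which restricts to $\Gr_\cP(v,q^{w,\sigma}) \cong \mathfrak{L}_\sigma(v,w)$ by Proposition~\ref{prop:DQG=DQV}, the desired containment ``$U \subseteq q^{w,\sigma}$ for some $\sigma$'' becomes the equality
\[
  \mathfrak{L}(v,w) = \bigcup_{\sigma \in \mathcal{W}} \mathfrak{L}_\sigma(v,w).
\]
Unwinding Definition~\ref{def:DQV}, this in turn reads: every stable nilpotent $(x,t) \in \Lambda(v,w)^\st$ is isomorphic to a subrepresentation of $(x^{w,\sigma},t^{w,\sigma})$ for some $\sigma \in \mathcal{W}$.

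Finally, this last assertion is the geometric shadow of the classical fact that the irreducible integrable highest weight module $L(\omega_w)$ of $\mathfrak{g}$ is exhausted by its Demazure submodules $L_{\omega_w,\sigma}$. The precise bridge between the two statements is supplied by the correspondence between Demazure modules and Demazure quiver varieties proved in \cite[Theorem~7.1]{Sav04a}, which is exactly the tool already invoked in the proof of Proposition~\ref{prop:Demazure-nesting}. I expect this final step to be the main obstacle: the representation-theoretic exhaustion of $L(\omega_w)$ by Demazure modules is standard, but transferring it into the varietal equality above requires locating each isomorphism class of stable nilpotent $(x,t)$ inside an honest Demazure quiver variety, which is where the content of \cite{Sav04a} is genuinely needed.
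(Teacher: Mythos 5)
Your proposal is correct and follows essentially the same route as the paper's own proof: reduce to finite-dimensional submodules (the paper does this via the filtration $V^{(n)}$ together with the finite-dimensionality of $(q^w)^{(n)}$, you via cyclic submodules, which works equally well), transfer to the quiver-variety side using Theorem~\ref{thm:QG=QV} and Proposition~\ref{prop:DQG=DQV}, and conclude by Demazure exhaustion. The last step you flag as the main obstacle is resolved in the paper exactly as you anticipate, by citing \cite{Sav04a}: choose $\sigma$ large enough that the $(\omega_w-\alpha_v)$-weight space of $L_{\omega_w}$ lies in the Demazure module $L_{\omega_w,\sigma}$, whence $\mathfrak{L}_\sigma(v,w)=\mathfrak{L}(v,w)$ and $U\subseteq q^{w,\sigma}\subseteq\tilde q^w$.
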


\begin{proof}
First note that for $n \in \N$, the submodule $(q^w)^{(n)} = \{v \in
q^w \ |\  \mathcal{P}_{\ge n} \cdot v = 0\}$ of $q^w$ is
finite-dimensional. This follows from the fact that $q^i$ is a
submodule of $\Hom_\C (e_i \mathcal{P},\C)$ (since this is an
injective module containing $s^i$), which has this property, and
$q^w = \bigoplus_{i \in I} (q^i)^{\oplus w_i}$.

Since $V$ is locally nilpotent, we have a filtration
\[
  0 = V^{(0)} \subseteq V^{(1)} = \socle V \subseteq V^{(2)} \subseteq \dots
\]
where $V^{(n)} = \{v \in V \ |\  \mathcal{P}_{\ge n} \cdot v = 0\}$.
Local nilpotency of $V$ ensures that $\bigcup_n V^{(n)} =V$.  It
suffices to show that each $V^{(n)}$ is contained in $\tilde q^w$.
Since $V^{(n)} \subseteq (q^w)^{(n)}$, it follows that $V^{(n)}$ is
finite-dimensional. Choose a linear retract $\pi : q^w \to
s^w$.  By Theorem~\ref{thm:QG=QV}, $V$ corresponds to a point of
$\mathfrak{L}(v,w)$.  Choose $\sigma \in \mathcal{W}$ sufficiently
large so that the $(\omega_w - \alpha_v)$-weight space of the
representation $L_{\omega_w}$ is contained in the Demazure module
$L_{\omega_w, \sigma}$ (we can always do this since the weight space
is finite-dimensional).  Then by Proposition~\ref{prop:DQG=DQV}, we
have that $V \subseteq q^{w,\sigma} \subseteq \tilde q^w$.
\end{proof}

\begin{Theorem} \label{thm:ln-injective-hull}
We have that $\tilde q^w$ is the injective hull of $s^w$ in the
category $\cP$-lnMod.
\end{Theorem}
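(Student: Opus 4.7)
The plan is to verify the three defining properties of an injective hull in the category $\cP$-lnMod: (a) $\tilde q^w$ is locally nilpotent, (b) $\tilde q^w$ is an essential extension of $s^w$, and (c) $\tilde q^w$ is injective in $\cP$-lnMod. Property (b) is essentially for free from the ambient essential extension $s^w \subseteq q^w$, property (a) follows from the nested direct-limit description, and the only substantive point is (c), which will reduce to the injectivity of $q^w$ in $\cP$-Mod combined with Lemma~\ref{lem:loc-nil-contained-in-tqw}.

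First, by Proposition~\ref{prop:Demazure-nesting} the system $(q^{w,\sigma})_{\sigma \in \mathcal{W}}$ is actually a nested family of submodules of $q^w$, so the direct limit $\tilde q^w$ may be identified with the union $\bigcup_{\sigma \in \mathcal{W}} q^{w,\sigma} \subseteq q^w$. Each $q^{w,\sigma}$ is finite-dimensional and hence nilpotent by Lemma~\ref{lem:injectives-are-nilpotent}, so for any $v \in \tilde q^w$ we have $v \in q^{w,\sigma}$ for some $\sigma$, and then $\cP_{\ge n} \cdot v = 0$ for $n$ large enough. Thus $\tilde q^w \in \cP$-lnMod, proving (a). For (b), note $s^w \subseteq \tilde q^w \subseteq q^w$; any nonzero submodule $U \subseteq \tilde q^w$ is a nonzero submodule of $q^w$, and since $q^w$ is the injective hull of $s^w$ in $\cP$-Mod we have $U \cap s^w \ne 0$.

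For the injectivity step (c), suppose we are given a monomorphism $\iota : V \hookrightarrow V'$ in $\cP$-lnMod together with a $\cP$-module homomorphism $f : V \to \tilde q^w$. Composing with the inclusion $\tilde q^w \hookrightarrow q^w$ and using that $q^w$ is injective in $\cP$-Mod, we obtain an extension $g : V' \to q^w$ with $g \circ \iota$ equal to $f$ followed by the inclusion. The image $g(V')$ is a homomorphic image of the locally nilpotent module $V'$, hence is itself locally nilpotent. Lemma~\ref{lem:loc-nil-contained-in-tqw} then forces $g(V') \subseteq \tilde q^w$, so $g$ factors through $\tilde q^w$, producing the desired extension $V' \to \tilde q^w$ of $f$. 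This verifies the defining lifting property for injectivity in the abelian subcategory $\cP$-lnMod of $\cP$-Mod.

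The main conceptual obstacle is the injectivity step, and its resolution rests entirely on Lemma~\ref{lem:loc-nil-contained-in-tqw}: without the guarantee that locally nilpotent submodules of $q^w$ lie in $\tilde q^w$, the lift $g$ produced by injectivity of $q^w$ might escape $\tilde q^w$. Once that lemma is in hand, the rest of the argument is bookkeeping.
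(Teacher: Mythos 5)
Your proof is correct and follows essentially the same route as the paper: local nilpotency from the nilpotent $q^{w,\sigma}$, essentiality inherited from $s^w \subseteq q^w$, and injectivity by extending into $q^w$ via its injectivity in $\cP$-Mod and then using Lemma~\ref{lem:loc-nil-contained-in-tqw} to force the extension to land in $\tilde q^w$. The only difference is that you spell out the essentiality and the identification of $\tilde q^w$ with the nested union, which the paper leaves implicit.
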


\begin{proof}
Since each $q^{w,\sigma}$ is nilpotent, it follows that $\tilde q^w$
is locally nilpotent and thus belongs to the category $\cP$-lnMod.
Furthermore, it is clear that $\tilde q^w$ has socle $s^w$ and that
it is an essential extension of $s^w$. It remains to show that
$\tilde q^w$ is an injective object of $\cP$-lnMod. Suppose $M$ and
$N$ are locally nilpotent $\mathcal{P}$-modules and we have a
homomorphism $M \to \tilde q^w$ and an injection $M \hookrightarrow
N$. Since $q^w$ is injective in the category of
$\mathcal{P}$-modules, there exists a homomorphism $h : N \to q^w$
such that the following diagram commutes:
\[
  \xymatrix{
    N \ar[rrd]^{h} & & \\
    M \ar@{^(->}[u] \ar[r] & \tilde q^w \ar@{^(->}[r] & q^w
  }
\]
Since $N$ is locally nilpotent, $h(N)$ is a locally nilpotent
submodule of $q^w$.  Therefore the map $h$ factors through $\tilde
q^w$ by Lemma~\ref{lem:loc-nil-contained-in-tqw}.
\end{proof}

\begin{Corollary}
We have that $\tilde q^w \cong q^w$ if and only if $Q$ is of finite
or affine (tame) type.
\end{Corollary}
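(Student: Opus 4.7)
The plan is to use Proposition~\ref{prop:injective-ln-condition} together with the characterization of $\tilde q^w$ as the injective hull of $s^w$ inside $\cP$-lnMod. The key observation is that $\tilde q^w$ is always locally nilpotent, while $q^w$ is locally nilpotent if and only if $Q$ is of finite or affine type. Once this is established, the corollary essentially reduces to showing that $q^w$ is locally nilpotent if and only if it coincides with $\tilde q^w$ inside $q^w$.

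First I would record two structural facts. On the one hand, $\tilde q^w = \varinjlim_\sigma q^{w,\sigma}$ is a union of the submodules $q^{w,\sigma} \subseteq q^w$, so $\tilde q^w$ is canonically a submodule of $q^w$. On the other hand, each $q^{w,\sigma}$ is finite-dimensional and nilpotent (this was used in the proof of Theorem~\ref{thm:ln-injective-hull}), so $\tilde q^w$ is locally nilpotent; since local nilpotence is preserved under isomorphism, any candidate isomorphism $\tilde q^w \cong q^w$ forces $q^w$ to be locally nilpotent.

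For the forward direction, suppose $Q$ is of finite or affine type. If $Q$ is of finite type then $\cP$ is finite-dimensional (Proposition~\ref{prop:path-algebra-type}), hence so is $q^w$, and Remark~\ref{rem:p-nilpotent} (which in turn uses Lemma~\ref{lem:injectives-are-nilpotent}) gives that $q^w$ is nilpotent, so locally nilpotent. If $Q$ is of affine type, then Proposition~\ref{prop:injective-ln-condition} directly asserts that $q^w$ is locally nilpotent. In either case, Lemma~\ref{lem:loc-nil-contained-in-tqw} yields $q^w \subseteq \tilde q^w$, and combined with the reverse inclusion this gives $q^w = \tilde q^w$.

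For the converse, suppose $Q$ is of indefinite (wild) type. Proposition~\ref{prop:injective-ln-condition} then says that $q^w$ fails to be locally nilpotent for every $w \in \N Q_0$, whereas $\tilde q^w$ is locally nilpotent by the observation above. Hence $\tilde q^w \not\cong q^w$. I do not expect any real obstacle here: the corollary is essentially a direct consequence of Proposition~\ref{prop:injective-ln-condition}, Lemma~\ref{lem:loc-nil-contained-in-tqw}, and the tautological inclusion $\tilde q^w \subseteq q^w$. The only point worth double-checking is the finite type case, where one must make sure local nilpotence of $q^w$ really is available (it is, via Remark~\ref{rem:p-nilpotent}).
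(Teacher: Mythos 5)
Your proof is correct and follows essentially the same route as the paper: the paper deduces the corollary from Theorem~\ref{thm:ln-injective-hull} (whose proof rests on exactly the facts you use, namely local nilpotence of $\tilde q^w$ and Lemma~\ref{lem:loc-nil-contained-in-tqw}) together with Proposition~\ref{prop:injective-ln-condition}, handling the finite type case via nilpotence of $q^w$. Your version merely unpacks the theorem into these ingredients instead of invoking uniqueness of the injective hull in $\cP$-lnMod, which is a cosmetic difference.
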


\begin{proof}
This follows immediately from Theorem~\ref{thm:ln-injective-hull}
and Proposition~\ref{prop:injective-ln-condition}.
\end{proof}

We see from the above that $\{q^{w,\sigma}\}_{\sigma \in
\mathcal{W}}$ is a ``rigid'' filtration of $\tilde q^w$ (rigid in
the sense of the uniqueness of submodules of the given $w$-extremal
graded dimensions). Proposition~\ref{prop:DQG=DQV} can be seen as a
representation theoretic interpretation of this filtration.  It
corresponds to the filtration by Demazure modules of the irreducible
highest-weight representation of $\g$ of highest weight $\omega_w$.
If the quiver $Q$ is of finite type, the Weyl group $\mathcal{W}$,
and hence this filtration, is finite. Otherwise they are infinite.
In the infinite case, we have a filtration of the
infinite-dimensional $\tilde q^w$ by finite-dimensional submodules
$q^{w,\sigma}$, $\sigma \in \mathcal{W}$.

By Lemma~\ref{lem:loc-nil-contained-in-tqw}, we have
\[
  \NGr_\cP(v,q^w) = \Gr_\cP(v,\tilde q^w) \quad \text{and} \quad \widehat{\NGr}_\cP(v,q^w) = \widehat{\Gr}_\cP(v,\tilde q^w).
\]

\begin{proof}[Proof of Theorem~\ref{thm:QG=QV}]
  Any element of $\NGr_\cP(v,q^w)$ is contained in the submodule $\{u \in q^w\ |\ \cP_{\ge k} \cdot u = 0\}$, where $k = \sum_i v_i$.  This submodule is finite-dimensional as in the proof of Lemma~\ref{lem:loc-nil-contained-in-tqw}.  Thus, $\NGr_\cP(v,q^w)$ and $\widehat{\NGr}_\cP(v,q^w)$ are naturally algebraic varieties.

  Fix $V \in \cP_0$-mod of graded dimension $v$ and a
  $\mathcal{P}_0$-module homomorphism $\pi : \tilde q^w \to s^w$ that is the identity on
  $s^w$.  We identify $s^w$ with the $W$ appearing in the definition
  of the quiver varieties. By Lemma~\ref{lem:loc-nil-contained-in-tqw} a point $\gamma \in \widehat \NGr_\cP(v,q^w)$ defines an
  embedding of $V$ into $\tilde q^w$, hence a $\cP$-module
  structure on $V$ satisfying the stability condition
  and so a point of $\Lambda(v,w)^\st$.  More precisely, $\gamma \in \widehat
  \NGr_\cP(v,q^w)$ corresponds to the point $(\gamma^{-1} x^w \gamma, \pi
  \gamma) \in \Lambda(v,w)^\st$, where $x^w$ is the element of $\Rep_{\tilde Q} \tilde q^w$
  corresponding to the $\cP$-module $\tilde q^w$.
  Thus we have a map
  \[
    \iota : \widehat {\NGr}_\cP(v,q^w) \to \Lambda(V,W)^\st,
  \]
  which is clearly algebraic and $\GL_V$-equivariant.
  By Proposition~\ref{prop:unique-injection}, $\iota$
  is bijective.  Passing to the quotient by
  $\GL_V$ we also obtain a bijective algebraic map $\bar \iota$
  from $\NGr_\cP(v,q^w)$ to $\mathfrak{L}(v,w)$.

  Now, $\NGr_\cP(v,q^w)$ and $\mathfrak{L}(v,w)$ are both projective.
  By, for example,
  \cite[Theorem~4.9~and~Exercise~4.4]{Hartshorne:1977}, the image
  of a projective variety under an algebraic map is always
  closed, so $\bar \iota$ takes closed subsets to closed subsets. Since
  $\bar \iota$ is a bijection, this implies that $\bar \iota {}^{{}^{-1}}$ is continuous.
  Hence $\bar \iota$ is a homeomorphism.  Since $\widehat {\NGr}_\cP(v,q^w)$ and
  $\Lambda(v,w)^\st$ are principal $G$-bundles over $\NGr_\cP(v,q^w)$ and
  $\mathfrak{L}(v,w)$, the map $\iota$ also induces a homeomorphism.
\end{proof}

\begin{Remark} \label{rem:tqg}
Since  $\NGr_\cP(v,q^w) = \Gr_\cP(v,\tilde q^w) $, we have also shown that $\Gr_\cP(v, \tilde q^w)$ is isomorphic to $\Lambda(v,w)$. The proof also shows that this isomorphism is uniquly chosen once one fixes a $\mathcal{P}_0$-module
retract $\pi : \tilde q^w \to s^w$.
\end{Remark}


\section{Group actions and graded quiver grassmannians}
\label{sec:actions-and-graded-versions}

In this section we define a natural $\GL_W \times G_\cP$-action on the
quiver grassmannians and show that the maps of
Theorem~\ref{thm:QG=QV} are equivariant.  We then define
graded/cyclic quiver grassmannians and show that they are isomorphic
to the graded/cyclic quiver varieties of Nakajima (see
\cite[\S4.1]{Nak01b} and \cite[\S4]{Nak04}).

\subsection{$\GL_w \times G_\cP$-action and equivariance}

Let $\GL_w=\GL_{s^w}$ and recall that $G_\cP$ is the group of
algebra automorphisms of $\cP$ that fix $\cP_0$ pointwise.  For a
$\cP$-module $V$ and $h \in G_\cP$, denote by $^h V$ the
$\cP$-module with action given by $(a,v) \mapsto h^{-1}(a) \cdot v$.
Now, fix $(g,h) \in \GL_w \times G_\cP$ and a $\cP_0$-module
retract $\pi : \tilde q^w \to s^w$. By
Proposition~\ref{prop:unique-injection} and Lemma~\ref{lem:loc-nil-contained-in-tqw}, there exists a unique
$\mathcal{P}$-module homomorphism $\gamma_{(g,h)} : {^h \tilde q^w} \to
\tilde q^w$ such that the following diagram commutes:
\[
  \xymatrix{
    {^h \tilde q^w} \ar[d]_{\pi} \ar[rr]^{\gamma_{(g,h)}}
    & & \tilde q^w \ar[d]^\pi \\ s^w \ar[rr]^g & &  s^w
  }
\]
The uniqueness assertion of Proposition~\ref{prop:unique-injection} ensures that $\gamma_{(g,h)}$ is bijective with inverse $\gamma_{(g^{-1},h^{-1})}$.  Note that since the action of $\mathcal{P}_0$ on $^h \tilde q^w$ and $\tilde q^w$ is the same, $\gamma_{(g,h)}$ can be considered as a
$\mathcal{P}_0$-automorphism of $\tilde q^w$. This defines a group
homomorphism $\GL_w \times G_\cP \to \GL_{\tilde q^w}$, $(g,h)
\mapsto \gamma_{(g,h)}$. In other words, it defines an action of
$\GL_w \times G_\cP$ on $\tilde q^w$ by $\mathcal{P}_0$-module automorphisms.
This in turn defines an action on $\widehat \Gr_\cP(v,\tilde q^w)$ and
$\Gr_\cP(v,\tilde q^w)$ given by
\begin{gather*}
  (g,h) \star \gamma = \gamma_{(g,h)} \gamma,\quad
  \gamma \in \widehat \Gr_\cP(v,\tilde q^w) \\
  (g,h) \star U = \gamma_{(g,h)}(U),\quad U \in
  \Gr_\cP(v,\tilde q^w).
\end{gather*}

\begin{Proposition} \label{prop:isoms-equivariance}
The isomorphisms of Theorem~\ref{thm:QG=QV} are $\GL_w \times
G_\cP$-equivariant.
\end{Proposition}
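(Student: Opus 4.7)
The plan is to verify equivariance directly on the bijection $\iota : \widehat{\Gr}_\cP(v,q^w) \to \Lambda(V,W)^\st$ from the proof of Theorem~\ref{thm:QG=QV}, since then equivariance of $\bar\iota$ will follow automatically on passing to the quotient by $\Aut_{\cP_0} V$ (this quotient commutes with the $G_w \times G_\cP$-action because the two actions commute, as noted in Section~\ref{subsec:quiver-action}). Recall from that proof that $\iota(\gamma) = (\gamma^{-1} x^w \gamma,\ \pi\gamma)$, where $x^w \in \End_{\tilde Q} q^w$ denotes the $\cP$-module structure on $q^w$, and $(g,h) \star \gamma = \gamma_{(g,h)} \circ \gamma$ by the definition above the Proposition. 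On the $\Lambda(V,W)^\st$ side, $(g,h)$ acts as $(x,t) \mapsto (h \star x,\ g \circ t) = (x \circ h^{-1},\ g \circ t)$.

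The core calculation is then to compute $\iota(\gamma_{(g,h)}\gamma)$ and match it with $(g,h) \star \iota(\gamma)$. For the map component, the defining square $\pi \circ \gamma_{(g,h)} = g \circ \pi$ gives immediately
\[
  \pi \circ (\gamma_{(g,h)} \gamma) \;=\; g \circ (\pi \gamma),
\]
which is the $G_w$-part. For the endomorphism component, the essential input is that $\gamma_{(g,h)} : {}^h q^w \to q^w$ is a $\cP$-module homomorphism, which unpacks to $\gamma_{(g,h)} \circ x^w_{h^{-1}(a)} = x^w_a \circ \gamma_{(g,h)}$ for every $a \in \cP$, i.e.\ conjugation by $\gamma_{(g,h)}$ twists $x^w$ by $h$. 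Therefore
\[
  (\gamma_{(g,h)}\gamma)^{-1} \, x^w_a \, (\gamma_{(g,h)}\gamma)
  \;=\; \gamma^{-1} x^w_{h^{-1}(a)} \gamma
  \;=\; \bigl(h \star (\gamma^{-1} x^w \gamma)\bigr)_a,
\]
which is the $G_\cP$-part. Combining the two identities yields $\iota(\gamma_{(g,h)}\gamma) = (g,h) \star \iota(\gamma)$.

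The only real subtlety is keeping the directions of the two actions consistent: the $G_\cP$-action on $\cP$-module structures is by precomposition with $h^{-1}$, and the twisted module $^h q^w$ is defined by $a \cdot v = h^{-1}(a) \cdot v$, so the characterization of $\gamma_{(g,h)}$ as a homomorphism $^h q^w \to q^w$ produces exactly the twist $h \star x^w$ after conjugation, rather than its inverse. Once these conventions are aligned, the verification is a one-line computation in each component, so I do not expect any serious technical obstacle beyond this bookkeeping. Finally, since the defined $G_w \times G_\cP$-action on $\widehat{\Gr}_\cP(v,q^w)$ descends to $\Gr_\cP(v,q^w)$ (it commutes with $\Aut_{\cP_0} V$, as both groups act via $\cP_0$-automorphisms fitting into compatible diagrams), equivariance of $\bar\iota$ follows from equivariance of $\iota$ by passing to orbits.
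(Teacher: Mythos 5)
Your proof is correct and takes essentially the same route as the paper: both arguments rest on the defining square $\pi\,\gamma_{(g,h)} = g\,\pi$ and on the fact that $\gamma_{(g,h)} : {}^h q^w \to q^w$ is a $\cP$-module homomorphism. The only (cosmetic) difference is that the paper checks equivariance of the inverse map $(x,t)\mapsto\gamma(x,t)$ by invoking the uniqueness clause of Proposition~\ref{prop:unique-injection}, whereas you verify it directly on $\iota(\gamma)=(\gamma^{-1}x^w\gamma,\ \pi\gamma)$ by the conjugation computation; the two verifications are equivalent, and your passage to the quotient by $\Aut_{\cP_0}V$ matches the paper's.
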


\begin{proof}
Let $(x,t) \mapsto \gamma(x,t)$ be the map $\Lambda(v,w)^\st
\xrightarrow{\cong} \widehat \Gr_\cP(v,\tilde q^w)$ of
Theorem~\ref{thm:QG=QV}. Fix $(x,t) \in \Lambda(v,w)^\st$. Recall
that for $(g,h) \in \GL_w \times G_\cP$, we have $(g,h) \star (x,t) =
(h \star x, g t)$. Let $V^x$ be the $\mathcal{P}$-module
corresponding to $x$. Then ${^h V}^x$ is the $\mathcal{P}$-module
corresponding to $h \star x$. We have the commutative diagram:
\[
  \xymatrix{
  & \tilde q^w \ar[d]^\pi \\
  V^x \ar[ur]^{\gamma(x,t)} \ar[r]_t & s^w
  }
\]
It follows that the diagram
\[
  \xymatrix{
  & {^h \tilde q}^w \ar[d]^{\pi}
  \ar[rr]^{\gamma_{(g, h)}}
  & & \tilde q^w \ar[d]^\pi \\
  {^h V}^x \ar[ur]^{\gamma(x,t)} \ar[r]_{t} &
  s^w \ar[rr]_g & & s^w
  }
\]
commutes.  By the uniqueness statement in
Proposition~\ref{prop:unique-injection}, we have
\[
  \gamma((g, h) \star (x,t)) = \gamma(h \star x,
  g t) = \gamma_{(g,h)} \gamma(x,t)
  =(g,h) \star \gamma(x,t),
\]
which proves that the map $\Lambda(v,w)^\st \cong \widehat
\Gr_\cP(v,\tilde q^w)$ is equivariant.  The remaining claim follows from
the fact that the isomorphism $\mathfrak{L}(v,w) \cong
\Gr_\cP(v,\tilde q^w)$ is obtained from the map $\Lambda(v,w)^\st
\xrightarrow{\cong} \widehat \Gr_\cP(v,\tilde q^w)$ by taking quotients by
$\GL_V$.
\end{proof}

\subsection{Graded/cyclic quiver grassmannians}

Fix an abelian reductive subgroup $A$ and a group homomorphism $\rho
: A \to \GL_w \times G_\cP$, defining an action of $A$ on $\tilde q^w$
by $\mathcal{P}_0$-module automorphisms.  The weight space
corresponding to $\lambda \in \Hom (A, \C^*)$ is
\begin{equation} \label{eq:qw-lambda-def}
  \tilde q^w(\lambda) \stackrel{\text{def}}{=} \{v \in \tilde q^w\ |\ \rho(a)(v)
  = \lambda(a) v \ \forall\ a \in A\}.
\end{equation}

We define
\[
  \Gr_\cP(\tilde q^w)^A = \{U \in \Gr_\cP(\tilde q^w) \ |\ \ \rho(a) \star U = U\
  \forall\ a \in A\},\quad \Gr_\cP(u,\tilde q^w)^A = \Gr_\cP(\tilde q^w)^A \cap
  \Gr_\cP(u,\tilde q^w).
\]
Then for all $U \in \Gr_\cP(\tilde q^w)^A$, we have the map $\rho_U : A \to
\GL_U$, $a \mapsto \rho(a)|_U$.  In other words,
$\rho_U$ is a representation of $A$ in the category of
$\mathcal{P}_0$-modules.  If $\rho_1$ and $\rho_2$ are two such
representations, we write $\rho_1 \cong \rho_2$ when $\rho_1$ and
$\rho_2$ are isomorphic.  That is, $\rho_1 \cong \rho_2$ for $\rho_i
: A \to \GL_{U_i}$, if there exists a
$\mathcal{P}_0$-module isomorphism $\xi : U_1 \to U_2$ such that
$\rho_2 = \xi \rho_1 \xi^{-1}$, where $\xi \rho_U \xi^{-1}$ denotes
the homomorphism $a \mapsto \xi \rho_U(a) \xi^{-1}$.  Then, for
$\rho_1 : A \to \GL_U$, $U$ a
$\mathcal{P}_0$-module, we define
\[
  \Gr_\cP(\rho_1,\tilde q^w)^A = \{U' \in \Gr_\cP(\tilde q^w)^A \ |\ \rho_{U'} \cong \rho_1 \}.
\]
Note that $\Gr_\cP(\rho_1,\tilde q^w)^A$ depends only on the isomorphism
class of $\rho_1$.

Recall the action of $\GL_w \times G_{\cP}$ on $\Lambda(V,W)^\st$ and
$\mathfrak{L}(v,w)$ described in Section~\ref{subsec:quiver-action} (where we now identify $W$ with $s^w$, $w = \dim_{Q_0} W$).
Define
\[
  \mathfrak{L}(w)^A = \{[x,t] \in \mathfrak{L}(v,w)\  | \ \rho(a) \star [x,t] = [x,t] \
  \forall\ a \in A\},\quad \mathfrak{L}(v,w)^A = \mathfrak{L}(w)^A \cap
  \mathfrak{L}(v,w).
\]
Fix a point $[x,t] \in \mathfrak{L}(v,w)^A$.  For every $a \in A$,
there exists a unique $\rho_1(a) \in \GL_V$ such that
\begin{equation} \label{eq:QV-action}
  \rho(a) \star (x,t) = \rho_1^{-1}(a) \cdot (x,t),
\end{equation}
and the map $\rho_1 : A \to \GL_V$ is a homomorphism.  We
let $\mathfrak{L}(\rho_1,w)^A \subseteq \mathfrak{L}(v,w)^A$ be the
set of $A$-fixed points $y$ such that \eqref{eq:QV-action} holds for
some representative $(x,t)$ of $y$.

\begin{Theorem} \label{thm:fixed-point-isom}
Let $V$ be a $\mathcal{P}_0$-module and $\rho_1 : A \to
\GL_V$ a group homomorphism. Then
$\Gr_\cP(\rho_1,\tilde q^w)^A$ is isomorphic to $\mathfrak{L}(\rho_1,w)^A$
as an algebraic variety.
\end{Theorem}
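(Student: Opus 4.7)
The plan is to deduce the theorem by restricting the $G_w\times G_\cP$-equivariant homeomorphism $\Gr_\cP(v,q^w)\cong\mathfrak{L}(v,w)$ of Theorem~\ref{thm:QG=QV} (with equivariance supplied by Proposition~\ref{prop:isoms-equivariance}) to the $A$-fixed loci, and then checking that under this restriction the subvariety $\Gr_\cP(\rho_1,q^w)^A$ corresponds exactly to $\mathfrak{L}(\rho_1,w)^A$. The first half is automatic: composing $\rho\colon A\to G_w\times G_\cP$ with the given $G_w\times G_\cP$-actions makes the homeomorphism of Theorem~\ref{thm:QG=QV} $A$-equivariant, and restricting to fixed loci yields a homeomorphism $\Gr_\cP(v,q^w)^A\cong\mathfrak{L}(v,w)^A$.

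For the second half, fix $V\in\cP_0$-mod of graded dimension $v$ and a $\cP_0$-module projection $\pi\colon q^w\to s^w$, and recall from Theorem~\ref{thm:QG=QV} that an embedding $\gamma\in\widehat{\Gr}_\cP(v,q^w)$ with image $U$ is sent to $(x,t)=(\gamma^{-1}x^w\gamma,\pi\gamma)$. Writing $\rho(a)=(g_a,h_a)$, the hypothesis $U\in\Gr_\cP(v,q^w)^A$ gives $\gamma_{(g_a,h_a)}(U)=U$, so there is a unique $\psi_a\in\Aut_{\cP_0}V$ with $\gamma_{(g_a,h_a)}\gamma=\gamma\psi_a$. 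By construction $\rho_U(a)=\gamma\psi_a\gamma^{-1}$, so the map $\psi\colon A\to\Aut_{\cP_0}V$ is the pullback of $\rho_U$ along the $\cP_0$-isomorphism $\gamma\colon V\to U$; in particular $\rho_U$ and $\psi$ lie in the same isomorphism class of $A$-representations in $\cP_0$-modules. On the other hand the $\Aut_{\cP_0}V$-action formula $g\cdot(x,t)=(gxg^{-1},tg^{-1})$ yields
\[
((\gamma\psi_a)^{-1}x^w(\gamma\psi_a),\,\pi\gamma\psi_a)=\psi_a^{-1}\cdot(x,t),
\]
while the equivariance of Proposition~\ref{prop:isoms-equivariance} identifies the point on the left with $\rho(a)\star(x,t)$. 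Comparing to the defining relation $\rho(a)\star(x,t)=\rho_1(a)^{-1}\cdot(x,t)$ forces $\rho_1(a)=\psi_a$, so $\rho_1$ coincides with the pullback of $\rho_U$ along $\gamma$.

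Since both $\Gr_\cP(\rho_1,q^w)^A$ and $\mathfrak{L}(\rho_1,w)^A$ are carved out by requiring the associated $A$-representations in $\cP_0$-modules to lie in the isomorphism class of $\rho_1$, the restricted homeomorphism of the first paragraph carries the former onto the latter, proving the theorem. Changing $\gamma$ or the representative $(x,t)$ replaces $\psi_a$ by an $\Aut_{\cP_0}V$-conjugate, so the isomorphism class of $\rho_U$ (respectively $\rho_1$) is well-defined and the identification is independent of choices. The main obstacle I anticipate is the bookkeeping in the preceding paragraph—carefully tracking the conjugation and inverse conventions so that the $\psi_a$ extracted from $\gamma_{(g_a,h_a)}\gamma=\gamma\psi_a$ really equals $\rho_1(a)$ and not its inverse or some conjugate; once these conventions are pinned down, the result is essentially formal, following from Theorem~\ref{thm:QG=QV} and Proposition~\ref{prop:isoms-equivariance} by restriction to $A$-fixed loci.
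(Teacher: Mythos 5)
Your proposal is correct and follows essentially the same route as the paper: both arguments identify the homomorphism $\rho_1$ appearing in the defining relation \eqref{eq:QV-action} with the pullback $\gamma^{-1}\left(\gamma_{\rho(a)}|_U\right)\gamma$ of $\rho_U$ along the embedding $\gamma = \gamma(x,t)$, using uniqueness (freeness of the $\Aut_{\cP_0}V$-action on the stable locus) to conclude $\rho_1 \cong \rho_U$ and hence that the two loci correspond under the restricted homeomorphism. The only cosmetic difference is that you invoke the equivariance of Proposition~\ref{prop:isoms-equivariance} together with the explicit formula for the bijection to obtain $\rho(a)\star(x,t)=\psi_a^{-1}\cdot(x,t)$, where the paper instead verifies the same identity by the direct conjugation computation $\tilde\rho^{-1}x\tilde\rho = h\star x$, $t\tilde\rho = gt$.
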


\begin{proof}
Choose $[x,t] \in \mathfrak{L}(\rho_1,w)^A$.  Let $U =
\gamma(x,t)(V)$ be the corresponding point of $\Gr_\cP(v,\tilde q^w)^A$. We
want to show that $\rho_1 \cong \rho_U$. Let $(g,h) \in A$ and
consider the following commutative diagram:
\[
  \xymatrix{
  & {^h \tilde q}^w \ar[d]^\pi \ar[rr]^{\gamma_{(g,h)}}
  & & \tilde q^w \ar[d]_\pi & \\
  {^h V}^x \ar[ur]^{\gamma(x,t)} \ar[r]_t &
  s^w \ar[rr]_g & & s^w
  & V\ . \ar[ul]_{\gamma(x,t)} \ar[l]^t
  }
\]
Then $\rho_U(g,h) = \gamma_{(g,h)}|_U$. Note that $\gamma(x,t)$ is
an isomorphism when its codomain is restricted to $U$ and we denote
by $\gamma(x,t)^{-1}$ the inverse of this restriction.  We claim
that $\rho_1 = \tilde \rho \stackrel{\text{def}}{=} \gamma(x,t)^{-1}
\left( \gamma_{(g,h)}|_U \right) \gamma(x,t)$.  It suffices to show
that
\[
  (h \star x, g t) = (g, h) \star (x,t)
  = {\tilde \rho}^{-1} \cdot (x,t) = ({\tilde \rho}^{-1} x {\tilde \rho}, t {\tilde \rho}).
\]
We have
\begin{align*}
  {\tilde \rho}^{-1} x &= \gamma(x,t)^{-1} (\gamma_{(g,h)}|_U)^{-1}
  \gamma(x,t) x \\
  &= \gamma(x,t)^{-1} (\gamma_{(g,h)}|_U)^{-1}
  x \gamma(x,t) \\
  &= \gamma(x,t)^{-1} (h \star x) (\gamma_{(g,z)}|_U)^{-1}
  \gamma(x,t) \\
  &= (h \star x) \gamma(x,t)^{-1} (\gamma_{(g,z)}|_U)^{-1}
  \gamma(x,t) \\
  &= (h \star x) {\tilde \rho}^{-1}
\end{align*}
and so ${\tilde \rho}^{-1} x {\tilde \rho} = h \star x$. Similarly,
$t {\tilde \rho} =  t \gamma(x,t)^{-1} \left( \gamma_{(g,h)}|_U
\right) \gamma(x,t) = g t$ and we are done.
\end{proof}

We now restrict to a special case of the above construction which
has been studied by Nakajima.  In particular, we define $\GL_w \times
\C^*$-actions on the quiver grassmannians corresponding to the
actions on quiver varieties described in
Section~\ref{subsec:quiver-action}.

For any function $m: \tilde Q_1 \to \Z$ such that $m(a) = -m(\bar a)$ for
all $a \in \tilde Q_1$, the group homomorphism \eqref{eq:GC-to-GP-homom}
defines a $\GL_w \times \C^*$-action on $\tilde q^w$, $\widehat
\Gr_\cP(v,\tilde q^w)$ and $\Gr_\cP(v,\tilde q^w)$ which we again denote by
$\star_m$.  If $A$ is any abelian reductive subgroup of $\GL_w \times
\C^*$, we can consider the weight decompositions as above. For the
remainder of this section, we fix $m=m_2$ (see
Section~\ref{subsec:quiver-action}). That is, $m(a)=0$ for all $a
\in Q_1$.  We also write $\star$ for $\star_m$.  Recall the definition \eqref{eq:qw-lambda-def} of $\tilde q^w(\lambda)$.  For $x \in
\mathcal{P}_n$, $v \in \tilde q^w(\lambda)$ and $(g,z) \in A$, we have
\[
  \rho(g,z)(x \cdot v) = \gamma_{(zg,h_m(z))}(x \cdot v) = z^{-n} x
  \cdot \gamma_{(zg,h_m(z))} (v) = z^{-n} \lambda(g,z) v.
\]
Thus $\mathcal{P}_n : \tilde q^w(\lambda) \to \tilde q^w(l^{-n} \lambda)$, where
we write $l^{-n}\lambda$ for the element $L(-n) \otimes \lambda$ of
$\Hom(A,\C^*)$ and $L(-n)=\C$ with $\C^*$-module structure given by
$z \cdot v = z^{-n} v$.

Now let $(g,z)$ be a semisimple element of $A$ and define
\[
  \Gr_\cP(\tilde q^w)^{(g,z)} = \{U \in \Gr_\cP(\tilde q^w) \ |\
  (g,z) \star U = U\},\quad \Gr_\cP(u,\tilde q^w)^{(g,z)} =
  \Gr_\cP(\tilde q^w)^{(g,z)} \cap \Gr_\cP(u,\tilde q^w).
\]
The module $\tilde q^w$ has an eigenspace decomposition with respect to the
action of $(g,z)$ given by
\[
  \tilde q^w = \bigoplus_{a \in \C^*} \tilde q^w(a),\quad \tilde q^w(a) = \{v \in \tilde q^w\ |\
  (g,z) \star v = av\}.
\]
Then $\Gr_\cP(\tilde q^w)^{(g,z)}$ consists of those $U \in \Gr_\cP(\tilde q^w)$
that are direct sums of subspaces of the weight spaces $\tilde q^w(a)$, $a
\in \C^*$.  Thus, each $U \in \Gr_\cP(\tilde q^w)^{(g,z)}$ inherits a
weight space decomposition, or $\C^*$-grading,
\[
  U = \bigoplus_{a \in \C^*} U(a),\quad U(a) = \{v \in U\ |\
  (g,z) \star v = av\}.
\]
As above we see that $\mathcal{P}_n : \tilde q^w(a) \to \tilde q^w(a z^{-n})$ and
$\mathcal{P}_n : U(a) \to U(a z^{-n})$.  We also regard $s^w$ as an
$A$-module via the composition
\[
  A \hookrightarrow \GL_w \times \C^*
  \xrightarrow{\text{projection}} \GL_w = \GL_{s^w}.
\]
Thus $s^w$ also inherits a $\C^*$-grading as above. For a $Q_0
\times \C^*$-graded vector space $V = \bigoplus_{i \in Q_0,\, a \in
\C^*} V_{i,a}$, define the graded dimension (or character)
\[
  \cha V = \sum_{i \in Q_0,\, a \in \C*} (\dim V_{i,a})X_{i,a} \in
  \N[X_{i,a}]_{i \in Q_0,\, a
\in \C^*}.
\]
Recall that a $\mathcal{P}_0$-module is equivalent to an
$Q_0$-graded vector space. Thus $\tilde q^w$, $s^w$, and elements of
$\Gr_\cP(\tilde q^w)^{(g,z)}$ have natural $Q_0 \times \C^*$-gradings and
we can consider their graded dimensions.

\begin{Definition}[Graded/cyclic quiver grassmannian]
For a graded dimension \\
$\mathbf{d} \in \N[X_{i,a}]_{i \in Q_0,\, a
\in \C^*}$, define
\[
  \Gr_\cP(\mathbf{d},\tilde q^w)^{(g,z)} = \{U \in
  \Gr_\cP(\tilde q^w)^{(g,z)}\ |\ \cha U = \mathbf{d} \}.
\]
We call $\Gr_\cP(\mathbf{d},\tilde q^w)^{(g,z)}$ a \emph{cyclic quiver grassmannian} if $z$ is a root of unity, and a \emph{graded quiver grassmannian} otherwise.
\end{Definition}

\begin{Theorem} \label{thm:graded-QG=QV-isom}
Let $V$ be a $Q_0 \times \C^*$-graded vector space. For a semisimple
element $(g, z) \in \GL_w \times \C^*$, the graded/cyclic quiver
grassmannian $\Gr_\cP(\cha V, \tilde q^w)^{(g,z)}$ is isomorphic to the
lagrangian graded/cylic quiver variety $\mathfrak{L}^\bullet(V,s^w)$
defined in \cite[\S4]{Nak04}, where $s^w$ is considered as a $Q_0
\times \C^*$-graded vector space as above.
\end{Theorem}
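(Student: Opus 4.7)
The plan is to reduce this to the preceding theorem on $A$-equivariant quiver grassmannians. Let $A \subseteq G_w \times \C^*$ be the Zariski closure of the cyclic subgroup generated by the semisimple element $(g,z)$; since $(g,z)$ is semisimple, $A$ is abelian and reductive. Via \eqref{eq:GC-to-GP-homom} applied to $m = m_2$, the group $A$ acts on $q^w$ by $\cP_0$-module automorphisms, and by construction the $A$-fixed locus in $\Gr_\cP(q^w)$ coincides with the $(g,z)$-fixed locus. By Proposition~\ref{prop:isoms-equivariance} the map of Theorem~\ref{thm:QG=QV} is $G_w \times G_\cP$-equivariant, hence $A$-equivariant; restricting to fixed points already yields a homeomorphism $\Gr_\cP(v,q^w)^{(g,z)} \cong \mathfrak{L}(v,w)^{(g,z)}$.

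Next I would match characters with representation-theoretic data. Give $V$ its $Q_0 \times \C^*$-grading and define $\rho_1 : A \to \Aut_{\cP_0} V$ by letting the $\C^*$-factor act through that grading and the $G_w$-factor act trivially. A $(g,z)$-fixed submodule $U \subseteq q^w$ inherits a $Q_0 \times \C^*$-grading from the weight space decomposition of $q^w$, and the conditions $\rho_U \cong \rho_1$ and $\cha U = \cha V$ are equivalent. Hence
\[
  \Gr_\cP(\cha V, q^w)^{(g,z)} = \Gr_\cP(\rho_1, q^w)^A,
\]
which by the preceding theorem is homeomorphic to $\mathfrak{L}(\rho_1, w)^A$.

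The final step is to identify $\mathfrak{L}(\rho_1, w)^A$ with Nakajima's lagrangian graded/cyclic quiver variety $\mathfrak{L}^\bullet(V, s^w)$. Unwinding \eqref{eq:QV-action} and using the degree shift $\cP_n : q^w(a) \to q^w(a z^{-n})$ computed in the discussion preceding the theorem, the statement that $[x,t]$ is $A$-fixed with prescribed character on $V$ becomes precisely Nakajima's graded equivariance conditions: each arrow component $x_a$ shifts the $\C^*$-grading by the prescribed amount, the preprojective relation holds on each graded piece, and $t$ is a morphism of $Q_0 \times \C^*$-graded vector spaces; the stability condition is unchanged. The main obstacle I expect is a careful verification that our conventions --- in particular the choice $m = m_2$ rather than $m_1$ --- recover exactly the definition of \cite[\S4]{Nak04} as opposed to the $t$-analogue variant; this reduces to a routine bookkeeping check of the $\C^*$-weight shifts using the formula for the action of $\cP_n$ on $q^w(a)$.
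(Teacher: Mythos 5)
Your proposal is correct and takes essentially the same approach as the paper: both arguments rest on the $G_w \times G_\cP$-equivariance of the homeomorphism of Theorem~\ref{thm:QG=QV} (Proposition~\ref{prop:isoms-equivariance}), which therefore restricts to the $(g,z)$-fixed loci with matching $Q_0 \times \C^*$-graded dimensions. The paper's proof is even more compressed, citing only that equivariance together with the fact that $\mathfrak{L}^\bullet(V,s^w)$ is by definition the set of points of $\mathfrak{L}(v,w)$ fixed by the semisimple element $(g,z)$, so the final convention-matching step you flag (and your detour through the $A$-fixed-point theorem) is treated there as definitional rather than as something to verify.
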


\begin{proof}
This follows immediately from
Proposition~\ref{prop:isoms-equivariance} since
$\mathfrak{L}^\bullet(V,W)$ is simply the set of points of
$\mathfrak{L}(V,W)$ fixed by a semisimple element $(g, z)$ of
$\GL_w \times \C^*$.
\end{proof}

\begin{Remark}
In \cite{Nak04}, Nakajima assumes the quiver $Q$ is of $ADE$ type.
However, the definitions in \cite[\S4]{Nak04} extend naturally to
the more general case.
\end{Remark}


\section{Geometric construction of representations of Kac-Moody algebras
and compatibility with nested quiver grassmannians}
\label{sec:geom-reps}

Since certain quiver grassmannians are isomorphic to lagrangian
Nakajima quiver varieties, one can translate Nakajima's geometric
construction of representations of Kac-Moody algebras into the
quiver grassmannian setting.  Having done this, one sees that the
quiver grassmannian construction is compatible with a natural
nesting of these varieties -- a property which seems to have no
analog in the setting of quiver varieties.  One benefit of this
nesting compatibility is that it allows one to always work with
quiver grassmannians in \emph{finite-dimensional} modules, even
though the injective objects $q^w$ and $\tilde q^w$ themselves may be
infinite-dimensional (outside of finite type).

For the remainder of this section, we fix a Kac-Moody algebra $\g$
with symmetric Cartan matrix and let $\mathcal{W}$ be its Weyl
group. Let $Q=(Q_0,Q_1)$ be a quiver whose underlying graph is the
Dynkin graph of $\g$ and let $\mathcal{P}=\cP(Q)$ denote the
corresponding path algebra. We also fix a $\mathcal{P}_0$-module
retract $\pi : \tilde q^w \to s^w$, allowing us to identify
$\Gr_\cP(v,\tilde q^w)$ with $\mathfrak{L}(v,w)$ as in Remark \ref{rem:tqg}.

\subsection{Constructible functions}

Recall that for a topological space $X$, a \emph{constructible set}
is a subset of $X$ that is obtained from open sets by a finite
number of the usual set theoretic operations (complement, union and
intersection).  A \emph{constructible function} on $X$ is a function
that is a finite linear combination of characteristic functions of
constructible sets.  For a complex variety $X$, let $M(X)$ denote
the $\C$-vector space of constructible functions on $X$ with values
in $\C$. We define $M(\emptyset)=0$. For a continuous map $p : X \to
X'$, define
\begin{gather*}
    p^* : M(X') \to M(X),\quad (p^*f')(x) = f'(p(x)),\quad f' \in
    M(X'),\\
    p_! : M(X) \to M(X'),\quad (p_!f)(x) = \sum_{a \in \Q} a \chi
    (p^{-1}(x) \cap f^{-1}(a)),\quad f \in M(X),
\end{gather*}
where $\chi$ denotes the Euler characteristic of cohomology with
compact support.

\begin{Lemma}
Suppose $X$ is a constructible subset of a topological space $Y$ and
let $\iota : X \hookrightarrow Y$ be the inclusion map. Then
\begin{enumerate}
  \item $\iota^*(f) = f|_X$ for $f \in M(Y)$, and
  \item for $f \in M(X)$, $\iota_!(f)$ is the extension of $f$ by zero.
  That is $\iota_!(f)(x) = f(x)$ for $x \in X$ and $\iota_!(f)(x) =
  0$ for $x \in Y \setminus X$.
\end{enumerate}
\end{Lemma}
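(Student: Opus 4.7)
The plan is to verify both statements directly from the definitions given just above the lemma. Since the definitions of $\iota^*$ and $\iota_!$ are pointwise, the whole proof reduces to evaluating at an arbitrary point and unwinding.

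For part (i), I would simply observe that for any $x \in X$ and $f \in M(Y)$, the defining formula $(\iota^* f)(x) = f(\iota(x))$ gives $(\iota^* f)(x) = f(x)$, which is exactly the value of $f|_X$ at $x$. Nothing further is needed. (One should also remark in passing that $f|_X$ is itself constructible, since the preimages under the restriction are the intersections with $X$ of the constructible level sets of $f$; this is immediate because $X$ is assumed constructible.)

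For part (ii), I would compute $(\iota_! f)(y)$ for $y \in Y$ by splitting into the two cases $y \in X$ and $y \notin X$. Recall $(\iota_! f)(y) = \sum_{a \in \Q} a \, \chi\bigl(\iota^{-1}(y) \cap f^{-1}(a)\bigr)$. If $y \in X$, then $\iota^{-1}(y) = \{y\}$, so $\iota^{-1}(y) \cap f^{-1}(a)$ equals $\{y\}$ when $a = f(y)$ and is empty otherwise; since $\chi(\{y\}) = 1$ and $\chi(\emptyset) = 0$, the sum collapses to $f(y)$. If instead $y \in Y \setminus X$, then $\iota^{-1}(y) = \emptyset$ and every term in the sum vanishes, giving $(\iota_! f)(y) = 0$. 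This yields the claimed extension-by-zero formula.

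There is no real obstacle here; the only mild subtlety is just confirming that the resulting functions are constructible (so that $\iota^*$ and $\iota_!$ land in the appropriate spaces $M(X)$, $M(Y)$), but this is immediate from the assumption that $X$ is constructible in $Y$, since the level sets of $f|_X$ are intersections of level sets of $f$ with $X$, and the level sets of the extension-by-zero of $f$ are either level sets of $f$ in $X$ (for nonzero values) or the union of $f^{-1}(0) \subseteq X$ with $Y \setminus X$ (for the value $0$). Both are constructible.
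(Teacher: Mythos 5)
Your proof is correct: the paper itself omits the argument as ``straightforward,'' and your direct unwinding of the definitions of $\iota^*$ and $\iota_!$ (including the case split $y\in X$ versus $y\in Y\setminus X$ and the check that the resulting functions are constructible) is exactly the intended straightforward verification.
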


\begin{proof}
The proof is straightforward and will be omitted.
\end{proof}

\subsection{Raising and lowering operators}

Let $V$ be a locally nilpotent $\mathcal{P}$-module with finite-dimensional socle. By Proposition~\ref{prop:unique-injection} and Lemma~\ref{lem:loc-nil-contained-in-tqw}, $V$ is isomorphic to a submodule of $\tilde q^w$ for some $w$, and in particular, for any $u$, $\Gr(u,V)$ is naturally a variety.
For $u, u' \in \N Q_0$ with $u
\le u'$ (i.e.\ $u=\sum u_i i$ and $u' = \sum u_i' i$ where $u_i \le
u_i'$ for all $i \in Q_0$), define
\begin{equation} \label{eq:intermediate-variety}
  \Gr_\cP(u,u',V) = \{(U, U') \in \Gr_\cP(u,V) \times
  \Gr_\cP(u',V) \ |\ U \subseteq U'\},
\end{equation}
and let
\[
  \Gr_\cP(u,V) \xleftarrow{\pi_1} \Gr_\cP(u,u',V)
  \xrightarrow{\pi_2} \Gr_\cP(u',V)
\]
be the natural projections given by $\pi_1(U,U')=U$ and $\pi_2(U,U')
= U'$.  For each $i \in I$, define the following operators:
\begin{equation} \label{def:raise-lower}
  \begin{array}{rcll}
    \hat E_i &:& M(\Gr_\cP(u+i,V)) \to M(\Gr_\cP(u,V)), &
    \hat E_i f = (\pi_1)_! (\pi_2^* f),\\
    \hat F_i &:& M(\Gr_\cP(u,V)) \to M(\Gr_\cP(u+i,V)), &
    \hat F_i f = (\pi_2)_! (\pi_1^* f).
  \end{array}
\end{equation}
where the maps $\pi_1$ and $\pi_2$ are as in
\eqref{eq:intermediate-variety} with $u'=u+i$.

\subsection{Compatibility with nested quiver grassmannians}

Suppose $V_1 \subseteq V_2$ are $\cP$-modules.  Then we have the
commutative diagram
\[
  \xymatrix{
    \Gr_\cP(u,V_1) \ar@{^{(}->}[d]^{\iota_u} & \Gr_\cP(u,u',V_1)
    \ar[l]_{\pi_1^1} \ar[r]^{\pi_2^1}  \ar@{^{(}->}[d]_{\iota_{u,u'}}
    & \Gr_\cP(u',V_1)
    \ar@{^{(}->}[d]_{\iota_{u'}} \\
    \Gr_\cP(u,V_2) & \Gr_\cP(u,u',V_2) \ar[l]_{\pi_1^2}
    \ar[r]^{\pi_2^2}  & \Gr_\cP(u',V_2)
  }
\]
where $\iota_u$, $\iota_{u'}$ and $\iota_{u,u'}$ denote the
canonical inclusions.  Denote by $\hat E_i^j$ and $\hat F_i^j$,
$j=1,2$, the operators defined in \eqref{def:raise-lower} for
$V=V_j$.

\begin{Proposition} \label{prop:restricted-operators}
We have
\begin{enumerate}
  \item $\hat E_i^1 = \iota_u^* \circ \hat E_i^2 \circ
  (\iota_{u+i})_!$, and
  \item $\hat F_i^1 = \iota_{u+i}^* \circ \hat F_i^2 \circ
  (\iota_u)_!$.
\end{enumerate}
\end{Proposition}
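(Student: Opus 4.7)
The plan is to derive both identities from the observation that the right-hand square of the displayed diagram is Cartesian, so one can apply the base change formula for constructible functions. Specifically, if $U' \in \Gr_\cP(u+i,V_1)$ and $(U,U') \in \Gr_\cP(u,u+i,V_2)$, then $U \subseteq U' \subseteq V_1$ forces $U \in \Gr_\cP(u,V_1)$. Hence
\[
  \Gr_\cP(u,u+i,V_1) \;=\; \Gr_\cP(u,u+i,V_2) \times_{\Gr_\cP(u+i,V_2)} \Gr_\cP(u+i,V_1),
\]
and the standard base change formula $g^* f_! = f'_! (g')^*$ for a Cartesian square yields both
\[
  (\pi_2^2)^* (\iota_{u+i})_! = (\iota_{u,u+i})_! (\pi_2^1)^*, \qquad \iota_{u+i}^* (\pi_2^2)_! = (\pi_2^1)_! (\iota_{u,u+i})^*.
\]
Note that the left square is generally not Cartesian (a pair $(U,U') \in \Gr_\cP(u,u+i,V_2)$ with $U \subseteq V_1$ need not have $U' \subseteq V_1$), but this turns out not to be needed.

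For part (i), expand the definition and then apply the first of the above identities, together with the functoriality $(f \circ g)_! = f_! \circ g_!$ and the commutativity $\pi_1^2 \circ \iota_{u,u+i} = \iota_u \circ \pi_1^1$:
\begin{align*}
  \iota_u^* \circ \hat E_i^2 \circ (\iota_{u+i})_!
  &= \iota_u^* (\pi_1^2)_! (\pi_2^2)^* (\iota_{u+i})_! \\
  &= \iota_u^* (\pi_1^2)_! (\iota_{u,u+i})_! (\pi_2^1)^* \\
  &= \iota_u^* (\iota_u)_! (\pi_1^1)_! (\pi_2^1)^* \\
  &= (\pi_1^1)_! (\pi_2^1)^* = \hat E_i^1,
\end{align*}
where the final line uses the elementary identity $\iota_u^* \circ (\iota_u)_! = \id$ valid for any inclusion. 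Part (ii) runs symmetrically, using instead the second base change identity together with $(f \circ g)^* = g^* \circ f^*$:
\begin{align*}
  \iota_{u+i}^* \circ \hat F_i^2 \circ (\iota_u)_!
  &= \iota_{u+i}^* (\pi_2^2)_! (\pi_1^2)^* (\iota_u)_! \\
  &= (\pi_2^1)_! (\iota_{u,u+i})^* (\pi_1^2)^* (\iota_u)_! \\
  &= (\pi_2^1)_! (\pi_1^1)^* \iota_u^* (\iota_u)_! \\
  &= (\pi_2^1)_! (\pi_1^1)^* = \hat F_i^1.
\end{align*}

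The only mildly subtle point is recognizing that the right square, rather than the left, is the Cartesian one, and then arranging the chain of equalities so that the extra inclusion operators pair up as $\iota_u^* (\iota_u)_!$ and collapse to the identity. Once that bookkeeping is set up, the argument is purely formal diagram chasing with the standard operations on constructible functions, and no additional geometric input is required.
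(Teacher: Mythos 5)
Your proof is correct and takes essentially the same approach as the paper: the two base change identities you derive from the Cartesian right-hand square are exactly what the paper verifies by hand (on characteristic functions for (i), pointwise for (ii)), resting on the same observation that $U' \subseteq V_1$ forces every submodule of $U'$ into $V_1$. The remaining bookkeeping — functoriality along the commutative left square and the collapse $\iota_u^* \circ (\iota_u)_! = \id$ — matches the paper's argument step for step.
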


\begin{proof}
Let $u'=u+i$.  By linearity, it suffices to prove the first
statement for functions of the form $1_X$ where $X$ is a
constructible subset of $\Gr_\cP(u',V_1)$.  Then $(\iota_{u'})_! 1_X
= 1_X$, where on the righthand side, $X$ is viewed as a subset of
$\Gr_\cP(u',V_2)$. We have
\[
  (\pi_2^2)^* \circ (\iota_{u'})_! 1_X = (\pi_2^2)^* 1_X =
  1_{(\pi_2^2)^{-1}(X)},
\]
and
\[
  (\iota_{u,u'})_! (\pi_2^1)^* 1_X = (\iota_{u,u'})_!
  1_{(\pi_2^1)^{-1}(X)} = 1_{(\pi_2^1)^{-1}(X)}.
\]
Since $X \subseteq \Gr_\cP(u',V_1)$, we have $(\pi_2^2)^{-1}(X) =
(\pi_2^1)^{-1}(X)$ and thus
\[
  (\pi_2^2)^* \circ (\iota_{u'})_! 1_X = (\iota_{u,u'})_! \circ (\pi_2^1)^*
  1_X.
\]
Therefore
\begin{align*}
  \iota_u^* \circ \hat E_i^2 \circ (\iota_{u'})_! 1_X &= \iota_u^*
  \circ (\pi_1^2)_! \circ (\pi_2^2)^* \circ (\iota_{u'})_! 1_X \\
  &= \iota_u^* \circ (\pi_1^2)_! \circ (\iota_{u,u'})_! \circ (\pi_2^1)^*
  1_X \\
  &= \iota_u^* \circ (\pi_1^2 \circ \iota_{u,u'})_! \circ (\pi_2^1)^* 1_X
  \\
  &= \iota_u^* \circ (\iota_u \circ \pi_1^1)_! \circ (\pi_2^1)^* 1_X \\
  &= \iota_u^* \circ (\iota_u)_! \circ (\pi_1^1)_! \circ (\pi_2^1)^*
  1_X \\
  &= (\pi_1^1)_! \circ (\pi_2^1)^* 1_X \\
  &= \hat E_i^1 1_X,
\end{align*}
where the sixth equality holds since $\iota_u^* \circ (\iota_u)_!$
is the identity on $M(\Gr_\cP(u,V_1))$.

We now prove the second statement.  Again, it suffices to prove it
for functions of the form $1_X$ where $X$ is a constructible subset
of $\Gr_\cP(u,V_1)$. Now, for $U \in \Gr_\cP(u',V_1)$, we have
\begin{align*}
  \iota_{u'}^* \circ \hat F_i^2 \circ (\iota_u)_! 1_X(U) &=
  \iota_{u'}^* \circ (\pi_2^2)_! \circ (\pi_1^2)^* \circ
  (\iota_u)_! 1_X(U) \\
  &= \iota_{u'}^* \circ (\pi_2^2)_! \circ (\pi_1^2)^* 1_X(U) \\
  &= \iota_{u'}^* \circ (\pi_2^2)_! \circ 1_{(\pi_1^2)^{-1}(X)} (U)
  \\
  &= \chi \left( (\pi_2^2)^{-1}(U) \cap (\pi_1^2)^{-1}(X) \right) \\
  &= \chi \left( (\pi_2^1)^{-1}(U) \cap (\pi_1^1)^{-1}(X) \right) \\
  &= (\pi_2^1)_! 1_{(\pi_1^1)^{-1}(X)}(U) \\
  &= (\pi_2^1)_! \circ (\pi_1^1)^* 1_X (U) \\
  &= \hat F_i^1 1_X(U),
\end{align*}
where the fifth equality holds since $U \in \Gr_\cP(u',V_1)$.
\end{proof}

It follows from Proposition~\ref{prop:DQG=DQV} that the Demazure
quiver grassmannians stabilize in the following sense.

\begin{Corollary} \label{cor:stabilize}
For $u,w \in \N Q_0$, there exists $\sigma \in \mathcal{W}$, such
that $\Gr_\cP(v,q^{w,\sigma'})$ is isomorphic to $\mathfrak{L}(v,w)$
for all $\sigma' \succeq \sigma$.
\end{Corollary}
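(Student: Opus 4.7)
The plan is to reduce the statement, via Proposition~\ref{prop:DQG=DQV} and Theorem~\ref{thm:QG=QV}, to a uniform containment of submodules. By Proposition~\ref{prop:DQG=DQV}, $\Gr_\cP(v,q^{w,\sigma'})$ is homeomorphic to $\mathfrak{L}_{\sigma'}(v,w)$, which is a subvariety of $\mathfrak{L}(v,w)$. So it suffices to exhibit $\sigma\in\mathcal{W}$ for which $\mathfrak{L}_{\sigma'}(v,w)=\mathfrak{L}(v,w)$ whenever $\sigma'\succeq\sigma$. Under the identification $\Gr_\cP(v,q^w)\cong\mathfrak{L}(v,w)$ of Theorem~\ref{thm:QG=QV}, this is equivalent to the assertion that \emph{every} $\cP$-submodule $U\subseteq q^w$ of graded dimension $v$ is contained in $q^{w,\sigma'}$.

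First I would observe that any such $U$ is finite-dimensional, hence nilpotent by Lemma~\ref{lem:injectives-are-nilpotent}, and in particular locally nilpotent. Lemma~\ref{lem:loc-nil-contained-in-tqw} then guarantees that $U$ lies in $\tilde q^w = \varinjlim_\sigma q^{w,\sigma}$. The content of the corollary, beyond what is already in that lemma, is a \emph{uniform} choice of $\sigma$ that works simultaneously for all $U\in\Gr_\cP(v,q^w)$.

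To obtain such a uniform $\sigma$, I would revisit the proof of Lemma~\ref{lem:loc-nil-contained-in-tqw}: the $\sigma$ constructed there is any element of $\mathcal{W}$ for which the Demazure module $L_{\omega_w,\sigma}$ contains the $(\omega_w-\alpha_v)$-weight space of $L_{\omega_w}$. Since this weight space is finite-dimensional, such a $\sigma$ exists, and crucially it depends only on $v$ and $w$, not on the particular submodule $U$. Hence the same $\sigma$ works for every $U\in\Gr_\cP(v,q^w)$.

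Finally, for $\sigma'\succeq\sigma$ Proposition~\ref{prop:Demazure-nesting} gives $q^{w,\sigma}\subseteq q^{w,\sigma'}$, so every $U\in\Gr_\cP(v,q^w)$ lies in $q^{w,\sigma'}$, producing the equality $\Gr_\cP(v,q^{w,\sigma'})=\Gr_\cP(v,q^w)$, and the homeomorphism with $\mathfrak{L}(v,w)$ follows from Theorem~\ref{thm:QG=QV}. The one potential obstacle is the uniformity of the choice of $\sigma$, but as noted this is immediate from the finite-dimensionality of the relevant weight space; the rest is an assembly of Lemma~\ref{lem:injectives-are-nilpotent}, Lemma~\ref{lem:loc-nil-contained-in-tqw}, Proposition~\ref{prop:Demazure-nesting}, Theorem~\ref{thm:QG=QV}, and Proposition~\ref{prop:DQG=DQV}.
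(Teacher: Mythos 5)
Your argument is correct, but it is assembled differently from the paper's. The paper's proof is essentially a citation: it invokes \cite[Proposition~6.1]{Sav04a}, which gives directly that $\mathfrak{L}_\sigma(v,w)=\mathfrak{L}(v,w)$ for some $\sigma$ and that this equality persists for all $\sigma'\succeq\sigma$, and then applies Proposition~\ref{prop:DQG=DQV}. You instead derive the stabilization internally: finite-dimensional submodules of $q^w$ are nilpotent (Lemma~\ref{lem:injectives-are-nilpotent}), the proof of Lemma~\ref{lem:loc-nil-contained-in-tqw} produces a $\sigma$ depending only on $(v,w)$ (any $\sigma$ with $L_{\omega_w}(\omega_w-\alpha_v)\subseteq L_{\omega_w,\sigma}$) that absorbs every dimension-$v$ submodule into $q^{w,\sigma}$, and the monotonicity in $\sigma'\succeq\sigma$ comes from Proposition~\ref{prop:Demazure-nesting} (together with the uniqueness in Proposition~\ref{prop:extremal}, which identifies the abstract submodule of $q^{w,\sigma'}$ with $q^{w,\sigma}$ inside $q^w$) rather than from the second half of the cited proposition of \cite{Sav04a}. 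Two remarks: first, you correctly identify that the statement of Lemma~\ref{lem:loc-nil-contained-in-tqw} alone is insufficient (it gives a $\sigma$ per submodule), and that the needed uniformity must be extracted from its proof; this is legitimate but means you are reusing the interior of a proof, not a stated result. Second, the step you borrow -- that the weight-space containment forces $U\subseteq q^{w,\sigma}$ -- is precisely where the results of \cite{Sav04a} relating Demazure modules to Demazure quiver varieties enter that lemma, so your route and the paper's ultimately rest on the same external input; yours simply channels it through Section~4 and replaces the persistence statement by the nesting of the modules $q^{w,\sigma}$, which makes the corollary look more self-contained.
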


\begin{proof}
It follows from \cite[Proposition~6.1]{Sav04a} that there exists a
$\sigma \in \mathcal{W}$ such that $\Gr_\cP(v, q^{w,\sigma}) \cong
\mathfrak{L}_\sigma(v,w) = \mathfrak{L}(v,w)$.  It follows from the
same proposition that for $\sigma' \succeq \sigma$, we have
$\mathfrak{L}_{\sigma'}(v,w) = \mathfrak{L}(v,w)$.  The result then
follows from Proposition~\ref{prop:DQG=DQV}.
\end{proof}

\begin{Corollary} \label{cor:DQG-stabilization}
For $v, w \in \N Q_0$, let $\sigma^{v,w} \in \mathcal{W}$ be minimal
among the $\sigma \in \mathcal{W}$ such that
$\Gr_\cP(v,q^{w,\sigma})$ is isomorphic to $\mathfrak{L}(v,w)$. Then
$\Gr_\cP(v,q^{w,\sigma}) \cong \Gr_\cP(v,\tilde q^w)$ for all $\sigma
\succeq \sigma^{v,w}$.  In particular, every nilpotent submodule of the
injective module $q^w$ of graded dimension $v$ is a submodule of
$q^{w,\sigma}$ for $\sigma \succeq \sigma^{v,w}$.
\end{Corollary}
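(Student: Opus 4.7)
The plan is to reduce the statement to the fact that $\mathfrak{L}_{\sigma}(v,w) = \mathfrak{L}(v,w)$ for all $\sigma \succeq \sigma^{v,w}$; this equality is built into Corollary~\ref{cor:stabilize} via \cite[Proposition~6.1]{Sav04a}. The inclusion $q^{w,\sigma} \hookrightarrow q^w$ gives a natural inclusion $\Gr_\cP(v,q^{w,\sigma}) \subseteq \Gr_\cP(v,q^w)$, and showing this inclusion is an equality for $\sigma \succeq \sigma^{v,w}$ is exactly what the corollary asks; the ``in particular'' clause is then a pure restatement in module-theoretic language.

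The key step is to verify that the homeomorphism of Theorem~\ref{thm:QG=QV} restricts to the homeomorphism of Proposition~\ref{prop:DQG=DQV}, i.e.\ that the square
\[
\xymatrix{
\Gr_\cP(v,q^{w,\sigma}) \ar[r]^-{\sim} \ar@{^{(}->}[d] & \mathfrak{L}_\sigma(v,w) \ar@{^{(}->}[d] \\
\Gr_\cP(v,q^w) \ar[r]^-{\sim} & \mathfrak{L}(v,w)
}
\]
commutes, with vertical arrows the natural inclusions. I would check this by unpacking the construction in Theorem~\ref{thm:QG=QV}: if $U \in \Gr_\cP(v,q^{w,\sigma})$ and $\gamma \colon V \xrightarrow{\sim} U$ is a $\cP_0$-isomorphism, then the pair $(x,t) = (\gamma^{-1} x^w \gamma, \pi\gamma)$ produced by that theorem embeds into $(x^{w,\sigma}, t^{w,\sigma})$, because $q^{w,\sigma}$ is itself the submodule of $q^w$ corresponding to $[x^{w,\sigma}, t^{w,\sigma}]$ under the homeomorphism (being the unique submodule of graded dimension $\sigma \cdot_w 0$). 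By Definition~\ref{def:DQV} this places $[x,t]$ in $\mathfrak{L}_\sigma(v,w)$, and the converse is symmetric.

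Once the square commutes, the corollary follows at once: applying the inverse of the bottom homeomorphism to the equality $\mathfrak{L}_\sigma(v,w) = \mathfrak{L}(v,w)$ produces the equality $\Gr_\cP(v,q^{w,\sigma}) = \Gr_\cP(v,q^w)$, and the ``in particular'' statement is the module-theoretic rephrasing. The main thing to verify is the commutativity of the square, but since the proof of Proposition~\ref{prop:DQG=DQV} already obtains its homeomorphism by restricting the one from Theorem~\ref{thm:QG=QV} to subrepresentations of $(x^{w,\sigma},t^{w,\sigma})$, this is essentially automatic, so no serious obstacle arises.
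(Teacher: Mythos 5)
Your overall route---restrict the homeomorphism of Theorem~\ref{thm:QG=QV} to the one of Proposition~\ref{prop:DQG=DQV} and then transport an equality of quiver varieties back to the grassmannian side---is the natural one (the paper states this corollary with no proof at all, treating it as immediate from Corollary~\ref{cor:stabilize} and Proposition~\ref{prop:DQG=DQV}), and your commutative square is indeed essentially the content of Proposition~\ref{prop:DQG=DQV}. The genuine gap is your first step: the equality $\mathfrak{L}_\sigma(v,w)=\mathfrak{L}(v,w)$ for \emph{all} $\sigma\succeq\sigma^{v,w}$ is not ``built into'' Corollary~\ref{cor:stabilize}. That corollary, via \cite[Proposition~6.1]{Sav04a}, only produces \emph{some} $\sigma_*$ such that $\mathfrak{L}_{\sigma'}(v,w)=\mathfrak{L}(v,w)$ for $\sigma'\succeq\sigma_*$. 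The element $\sigma^{v,w}$ is defined by the weaker, purely topological condition that $\Gr_\cP(v,q^{w,\sigma^{v,w}})$ be \emph{abstractly homeomorphic} to $\mathfrak{L}(v,w)$, and an arbitrary $\sigma\succeq\sigma^{v,w}$ need not dominate $\sigma_*$, so Corollary~\ref{cor:stabilize} says nothing about it. Nothing in your argument rules out that $\mathfrak{L}_{\sigma^{v,w}}(v,w)$ is a \emph{proper} closed subvariety of $\mathfrak{L}(v,w)$ that merely happens to be homeomorphic to it; yet the set-theoretic equality is exactly what the ``in particular'' clause requires, so this is the whole point to be proved, not a citation.

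To close the gap, upgrade the homeomorphism at $\sigma^{v,w}$ to an equality and then propagate. For the upgrade: $\mathfrak{L}(v,w)$ is lagrangian \cite{Nak94}, hence pure-dimensional with finitely many irreducible components, and both the number of irreducible components and their Krull dimensions are invariants of the Zariski topology; a closed subset of a pure-dimensional variety that is homeomorphic to the whole variety therefore has the same number of components of top dimension, each of which must be an irreducible component of the ambient variety, forcing the subset to be everything. Applied to $\mathfrak{L}_{\sigma^{v,w}}(v,w)\subseteq\mathfrak{L}(v,w)$ (equivalently, to $\Gr_\cP(v,q^{w,\sigma^{v,w}})\subseteq\Gr_\cP(v,q^w)$), this gives $\Gr_\cP(v,q^{w,\sigma^{v,w}})=\Gr_\cP(v,q^w)$; Proposition~\ref{prop:Demazure-nesting} then gives $q^{w,\sigma^{v,w}}\subseteq q^{w,\sigma}\subseteq q^w$ for $\sigma\succeq\sigma^{v,w}$, so the equality holds for all such $\sigma$, which is the full statement including the ``in particular'' clause. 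Alternatively, one can count as in \cite{Sav04a}: the irreducible components of $\mathfrak{L}_\sigma(v,w)$ enumerate a basis of the $(\omega_w-\alpha_v)$-weight space of the Demazure module $L_{\omega_w,\sigma}$, so the homeomorphism forces that weight space to coincide with the corresponding weight space of $L_{\omega_w}$, and \cite[Proposition~6.1]{Sav04a} then yields $\mathfrak{L}_{\sigma^{v,w}}(v,w)=\mathfrak{L}(v,w)$ directly. (A side remark: minimal elements for the Bruhat order need not be unique, but that is an imprecision of the statement itself, not of your argument.)
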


\begin{Remark}
In the case when $\g$ is of finite type, we can take $\sigma =
\sigma_0$, where $\sigma_0$ is the longest element of the Weyl
group. Then $\Gr_\cP(v,q^w)$ is isomorphic to
$\Gr_\cP(v,q^{w,\sigma_0})$ for all $v \in \N Q_0$.
\end{Remark}

\begin{Lemma} \label{lem:projection-cd}
Suppose $w,v,v' \in \N Q_0$ with $v \le v'$ and $\sigma \in
\mathcal{W}$. Then the diagram
\[
  \xymatrix{
    \Gr_\cP(v,q^{w,\sigma}) \ar@{^{(}->}[d] &
    \Gr_\cP(v,v',q^{w,\sigma}) \ar[r]^{\pi_2} \ar[l]_{\pi_1} \ar@{^{(}->}[d] &
    \Gr_\cP(v',q^{w,\sigma}) \ar@{^{(}->}[d] \\
    \Gr_\cP(v,\tilde q^w) &
    \Gr_\cP(v,v',\tilde q^w) \ar[r]^{\pi_2} \ar[l]_{\pi_1} &
    \Gr_\cP(v',\tilde q^w)
  }
\]
commutes, where the vertical arrows are the natural inclusions.  If
$\sigma \succeq \sigma^{v,w}, \sigma^{v',w}$, then the vertical
arrow are isomorphisms.
\end{Lemma}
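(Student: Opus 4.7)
The plan is to dispatch commutativity as a tautology from the definitions and then to reduce the isomorphism assertion to two applications of Corollary~\ref{cor:DQG-stabilization}, one in each graded dimension, together with a short chase of the middle column.

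For commutativity I will simply unwind what the arrows are. The horizontal maps $\pi_1$ and $\pi_2$ are the coordinate projections $(U,U') \mapsto U$ and $(U,U') \mapsto U'$, defined by exactly the same formula on both rows; the vertical arrows are the inclusions induced by $q^{w,\sigma} \hookrightarrow q^w$, which merely reinterpret a submodule of $q^{w,\sigma}$ as a submodule of $q^w$. Chasing any pair $(U,U')$ around either square yields the same subspace either way, so both squares commute.

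For the isomorphism claim, assume $\sigma \succeq \sigma^{v,w}$ and $\sigma \succeq \sigma^{v',w}$. Applying Corollary~\ref{cor:DQG-stabilization} separately in the two graded dimensions $v$ and $v'$ tells us that every submodule of $q^w$ of graded dimension $v$ (respectively $v'$) already lies inside $q^{w,\sigma}$. This immediately makes the leftmost and rightmost vertical inclusions bijective on points. Since each of these inclusions is a closed immersion of projective varieties (cut out inside the relevant ordinary grassmannian by the closed condition that the subspace be contained in the closed subspace $q^{w,\sigma} \subseteq q^w$), bijectivity on points upgrades each to an isomorphism.

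The middle vertical inclusion will be handled by the same principle together with one short observation: given $(U,U') \in \Gr_\cP(v,v',q^w)$, the stabilization applied to $v'$ forces $U' \subseteq q^{w,\sigma}$, whence $U \subseteq U' \subseteq q^{w,\sigma}$ automatically. Thus $(U,U')$ already lies in the incidence variety $\Gr_\cP(v,v',q^{w,\sigma})$, so the middle inclusion is surjective on points and hence, by the same closed-immersion argument, an isomorphism. I do not anticipate any serious obstacle here; the only subtlety worth mentioning is the standard fact that a bijective closed immersion of projective varieties is an isomorphism.
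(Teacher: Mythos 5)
Your proposal is correct and follows the same route as the paper, which simply observes that the lemma is an immediate consequence of Corollary~\ref{cor:DQG-stabilization}; you have merely made explicit the (routine) point-chase for commutativity, the containment $U \subseteq U' \subseteq q^{w,\sigma}$ for the middle column, and the standard upgrade from a surjective closed immersion of (reduced) varieties to an isomorphism.
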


\begin{proof}
This follows immediately from Corollary~\ref{cor:DQG-stabilization}.
\end{proof}

\subsection{Quiver grassmannian realization of representations}

For each $i \in I$, define
\begin{equation} \label{def:Hi}
  \begin{array}{rcll}
    H_i : M(\Gr_\cP(v,\tilde q^w)) \to M(\Gr_\cP(v,\tilde q^w)),\quad H_i f = (w-Cv)_i f,
  \end{array}
\end{equation}
where $C$ is the Cartan matrix of $\g$.  Also, in the special case
when $V=\tilde q^w$ for some $w$, we denote the operators $\hat E_i$ and
$\hat F_i$ by $E_i$ and $F_i$ respectively.

\begin{Proposition}
The operators $E_i$, $F_i$, $H_i$ define an action of $\g$ on
$\bigoplus_u M(\Gr_\cP(u,\tilde q^w))$.
\end{Proposition}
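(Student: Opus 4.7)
The plan is to transport the problem to the lagrangian Nakajima quiver varieties via Theorem~\ref{thm:QG=QV} and invoke Nakajima's geometric construction of the integrable highest weight $\g$-module $L(\omega_w)$ on $\bigoplus_v M(\mathfrak{L}(v,w))$ (see \cite{Nak98}).

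First, the homeomorphism $\Gr_\cP(v,q^w) \cong \mathfrak{L}(v,w)$ of Theorem~\ref{thm:QG=QV} yields a $\C$-linear isomorphism $M(\Gr_\cP(v,q^w)) \cong M(\mathfrak{L}(v,w))$ for each $v \in \N Q_0$. Under this identification the operator $H_i$ from \eqref{def:Hi} matches Nakajima's Cartan operator, since $(w - Cv)_i = \langle \omega_w - \alpha_v, \alpha_i^\vee\rangle$ is precisely the weight by which $H_i$ acts on the $(\omega_w - \alpha_v)$-weight space of $L(\omega_w)$.

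Second, I would verify that the Hecke-type correspondence \eqref{eq:intermediate-variety}, consisting of nested pairs $U \subseteq U'$ of $\cP$-submodules of $q^w$, transports to Nakajima's standard correspondence in $\mathfrak{L}(v,w) \times \mathfrak{L}(v+i,w)$ parametrizing stable triples $((x,t),(x',t'))$ with $(x,t)$ isomorphic to a subrepresentation of $(x',t')$ of the appropriate graded codimension. This is essentially tautological from Proposition~\ref{prop:unique-injection}: an inclusion $U \subseteq U'$ of submodules of $q^w$ is, functorially, the same as an inclusion of the corresponding stable $\cP$-module structures obtained from the construction in Theorem~\ref{thm:QG=QV}. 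The resulting commutative square of projections $\pi_1, \pi_2$ identifies $\hat E_i, \hat F_i$ on the quiver grassmannian side with Nakajima's raising and lowering operators on the quiver variety side, because $\iota^*$ and $\iota_!$ are intertwined by the pushforward--pullback formalism along any homeomorphism of varieties.

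Since Nakajima has shown that $E_i, F_i, H_i$ satisfy the defining relations of $\g$ on $\bigoplus_v M(\mathfrak{L}(v,w))$, the conclusion follows by transport of structure. The only real content of the argument is the match of correspondences, and this is where I expect to spend most of the effort; however, because the bijection of Theorem~\ref{thm:QG=QV} was constructed precisely to be functorial with respect to inclusions of submodules (via the uniqueness in Proposition~\ref{prop:unique-injection}), the required identification of projections is essentially automatic, leaving no substantive new representation-theoretic input to verify.
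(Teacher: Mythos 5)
Your overall strategy coincides with the paper's: transport the operators through the homeomorphism of Theorem~\ref{thm:QG=QV} and quote Nakajima's verification of the $\g$-relations (the paper cites \cite[Proposition~10.12]{Nak94}). However, there is a genuine gap in the middle step, which you dismiss as ``essentially tautological.'' Nakajima's Hecke correspondence is \emph{not} the locus of pairs in $\mathfrak{L}(v,w)\times\mathfrak{L}(v+i,w)$ cut out by the condition ``isomorphic to a subrepresentation''; it is $\mathfrak{F}(v,w;i)=\tilde{\mathfrak{F}}(v,w;i)/\Aut_{\cP_0}V$, where $\tilde{\mathfrak{F}}(v,w;i)$ consists of triples $(x,t,Z)$ with $(x,t)\in\Lambda(V,W)^{\st}$ and $Z\subseteq V$ an $x$-invariant subspace of the prescribed dimension, and the convolution defining Nakajima's raising/lowering operators goes through this variety and its projections. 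So what must actually be proved is that the nested quiver grassmannian $\Gr_\cP(v-i,v,q^w)$ of \eqref{eq:intermediate-variety} is identified with $\mathfrak{F}(v,w;i)$ \emph{compatibly with both projections} $\pi_1,\pi_2$; this is precisely the content of the paper's proof, which exhibits the map $(\gamma,Z)\mapsto(\gamma(Z),\gamma(V))$ as a principal $\Aut_{\cP_0}V$-bundle onto $\Gr_\cP(v-i,v,q^w)$ and thereby obtains the commutative diagram \eqref{eq:QV-QG-commutative-diagram}.

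If you insist on your pair-of-points description, you additionally owe an argument that the natural map from $\mathfrak{F}(v,w;i)$ (which remembers the subspace $Z$) to such pairs is a bijection, i.e.\ that two distinct $x$-invariant subspaces of a stable $(x,t)$ cannot yield the same framed subrepresentation class; this is true here, and it does follow from the uniqueness statement in Proposition~\ref{prop:unique-injection} (two framed-isomorphic submodules of $q^w$ must coincide as subsets), but it is an argument to be made, not a tautology, and in general quiver-variety settings the Hecke correspondence is not determined by the pair of orbits alone. Once the correspondences are matched with their projections, your remaining points (the $H_i$ normalization $(w-Cv)_i$, invariance of $\chi$-convolution under the homeomorphisms, and the appeal to Nakajima) are fine and agree with the paper. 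So the fix is concrete: replace the ``tautological'' claim with the explicit identification $\mathfrak{F}(v,w;i)\cong\Gr_\cP(v-i,v,q^w)$, proved either by the paper's bundle argument or by the uniqueness argument sketched above.
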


\begin{proof}
Throughout this proof, for varieties $X$ and $Y$, the notation $X
\cong Y$ means that $X$ and $Y$ are homeomorphic.  In
\cite[\S10]{Nak94}, Nakajima defines the variety
\[
    \mathfrak{F}(v,w;i) \stackrel{\text{def}}{=} {\tilde
    {\mathfrak{F}}}(v,w;i)/\GL_V,
\]
where
\[
    {\tilde {\mathfrak{F}}}(v,w;i) = \{(x,t,Z)\ |\ (x,t) \in
    \Lambda(V,W)^\st,\ Z \subseteq V,\ x(Z) \subseteq Z,\ \dim
    Z = v-i\}.
\]
Using the homeomorphism of Theorem~\ref{thm:QG=QV}, we have
\[
    \tilde {\mathfrak{F}}(v,w;i) \cong \{(\gamma,Z)\ |\ \gamma \in
    \widehat \Gr_\cP(v,\tilde q^w),\ Z
    \subseteq V,\ \dim Z = v-i,\ \cP \cdot \gamma(Z)) \subseteq \gamma(Z)\}.
\]
The map
\begin{gather*}
    \{(\gamma,Z)\ |\ \gamma \in \widehat \Gr_\cP(v,\tilde q^w),\ Z \subseteq V,\
    \dim Z = v-i,\ \cP \cdot \gamma(Z) \subseteq
    \gamma(Z)\} \to \Gr_\cP(v-i,v,\tilde q^w),\\
    (\gamma,Z) \mapsto (\gamma(Z), \gamma(V))
\end{gather*}
is a principal $\GL_V$-bundle and thus
\begin{align*}
    \mathfrak{F}(v,w;i) &= {\tilde {\mathfrak{F}}}(v,w;i)/\GL_V
    \\
    &\cong \{(\gamma,Z)\ |\ \gamma \in \widehat \Gr_\cP(v,\tilde q^w),\
    Z \subseteq V,\ \dim Z = v-i,\ \cP \cdot \gamma(Z) \subseteq
    \gamma(Z)\}/ \GL_V \\
    &= \Gr_\cP(u-i,u,\tilde q^w).
\end{align*}
Therefore, the following diagram commutes:
\begin{equation} \label{eq:QV-QG-commutative-diagram}
    \xymatrix{ \Gr_\cP(v-i,\tilde q^w) \ar[d]^{\cong} & \Gr_\cP(v-i,v,\tilde q^w)
    \ar[l]_{\pi_1}
    \ar[r]^{\pi_2} \ar[d]^{\cong} & \Gr_\cP(v,\tilde q^w) \ar[d]^{\cong} \\
    \mathfrak{L}(v-i,w) & \mathfrak{F}(v,w;i)
    \ar[r]^(.5){\pi_2}
    \ar[l]_(.4){\pi_1} & \mathfrak{L}(v,w) }
\end{equation}
where the maps $\pi_1$ and $\pi_2$ appearing on the bottom row are
described in \cite[\S10]{Nak94}.  The result then follows
immediately from \cite[Proposition~10.12]{Nak94}.
\end{proof}

Let $U(\g)^-$ be the lower half of the enveloping algebra of $\g$.
Then let $\alpha$ be the constant function on $\Gr_\cP(0,\tilde q^w)$ with
value 1 and let
\begin{align}
  L_w &\stackrel{\text{def}}{=} U(\g)^- \cdot \alpha \subseteq
    \bigoplus_v M(\Gr_\cP(v,\tilde q^w)),\\
    L_w(v) &\stackrel{\text{def}}{=} M(\Gr_\cP(v,\tilde q^w)) \cap L_w.
\end{align}

\begin{Theorem} \label{thm:geom-rep-Demazure}
The operators $E_i$, $F_i$, $H_i$ preserve $L_w$ and $L_w$ is
isomorphic to the irreducible highest-weight integrable
representation of $\g$ with highest weight $\omega_w$.  The summand
$L_w(v)$ in the decomposition $L_w = \bigoplus_v L_w(v)$ is a weight
space with weight $\omega_w - \alpha_v$.
\end{Theorem}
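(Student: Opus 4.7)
The plan is to transport the question via the homeomorphism $\Gr_\cP(v,q^w) \cong \mathfrak{L}(v,w)$ of Theorem~\ref{thm:QG=QV} to the setting of Nakajima's lagrangian quiver varieties, where the analogous statement was established in \cite{Nak94, Nak98}. The heavy lifting has already been done in the proof of the previous proposition, which exhibited the commutative diagram \eqref{eq:QV-QG-commutative-diagram} identifying the correspondence $\Gr_\cP(v-i,q^w) \leftarrow \Gr_\cP(v-i,v,q^w) \rightarrow \Gr_\cP(v,q^w)$ with Nakajima's Hecke correspondence $\mathfrak{L}(v-i,w) \leftarrow \mathfrak{F}(v,w;i) \rightarrow \mathfrak{L}(v,w)$. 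The role of the proof is mostly to pull apart the definitions and invoke Nakajima's theorem.

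First, I would observe that the homeomorphism of Theorem~\ref{thm:QG=QV} induces a linear isomorphism $M(\Gr_\cP(v,q^w)) \cong M(\mathfrak{L}(v,w))$ via pullback, and that the commutativity of \eqref{eq:QV-QG-commutative-diagram} together with the base change formulae for $(\cdot)_!$ and $(\cdot)^*$ implies that this isomorphism intertwines the operators $E_i, F_i$ of \eqref{def:raise-lower} with the raising and lowering operators defined on $M(\mathfrak{L}(v,w))$ in \cite[\S10]{Nak94}. For $H_i$, this is immediate from the definition \eqref{def:Hi}, since Nakajima's $H_i$ is multiplication by the same constant $(w-Cv)_i$ on each summand. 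Therefore the action of $E_i, F_i, H_i$ on $\bigoplus_v M(\Gr_\cP(v,q^w))$ is identified with the action of \cite[Theorem~10.14]{Nak94} on $\bigoplus_v M(\mathfrak{L}(v,w))$.

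Next, under this identification, the constant function $\alpha$ on $\Gr_\cP(0,q^w) = \{0\}$ corresponds to the analogous constant function $\mathbf{1}$ on $\mathfrak{L}(0,w) = \{\text{pt}\}$, which is the highest weight vector in Nakajima's construction. Thus the submodule $L_w = U(\g)^- \cdot \alpha$ corresponds precisely to the submodule $U(\g)^- \cdot \mathbf{1}$ studied by Nakajima. By \cite[Theorem~10.14]{Nak94} (in finite/affine type) and its extension to the general symmetric Kac-Moody setting in \cite{Nak98}, this submodule is preserved by all of $E_i, F_i, H_i$ and is isomorphic to the integrable irreducible highest weight representation $L(\omega_w)$ of $\g$.

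Finally, the weight space statement is bookkeeping: a function supported on $\Gr_\cP(v,q^w)$ is an eigenvector of $H_i$ with eigenvalue $(w - Cv)_i = \langle \omega_w - \alpha_v, \alpha_i^\vee \rangle$, so $L_w(v) \subseteq L_w$ lies in the $(\omega_w - \alpha_v)$-weight space. The main thing one must be careful about is the intertwining in the first step — concretely, that the proper base change (or its concrete analog for constructible functions, as in \cite[\S10]{Nak94}) applies to the inclusions of the bundles on top of \eqref{eq:QV-QG-commutative-diagram}; this is routine given that both vertical isomorphisms in that diagram come from the same homeomorphism of Theorem~\ref{thm:QG=QV}, restricted appropriately.
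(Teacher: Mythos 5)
Your proposal is correct and follows essentially the same route as the paper: the paper's proof simply notes that, in light of the commutative diagram \eqref{eq:QV-QG-commutative-diagram} relating the quiver grassmannian correspondences to Nakajima's Hecke correspondences $\mathfrak{F}(v,w;i)$, the result follows immediately from \cite[Theorem~10.14]{Nak94}. Your additional remarks on intertwining the operators via pullback along the homeomorphisms of Theorem~\ref{thm:QG=QV}, matching the highest weight vectors, and the weight-space bookkeeping for $H_i$ are exactly the implicit details the paper leaves to the reader.
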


\begin{proof}
In light of the commutative diagram
\eqref{eq:QV-QG-commutative-diagram}, the result follows immediately
from \cite[Theorem~10.14]{Nak94}.
\end{proof}

\begin{Remark}
By Proposition~\ref{prop:restricted-operators} and
Lemma~\ref{lem:projection-cd}, we can always work with
$\Gr_\cP(v,q^{w,\sigma})$ for large enough $\sigma$. Therefore, we
can avoid quiver grassmannians in infinite-dimensional injectives if
desired.
\end{Remark}

From the realization of irreducible highest-weight representations
given in Theorem~\ref{thm:geom-rep-Demazure}, we obtain some natural
automorphisms of these representations.  Recall from
Definition~\ref{def:Aut-action-on-QG} the natural action of
$\Aut_\cP \tilde q^w$ on $\Gr_\cP(v,\tilde q^w)$ for any $v$ given by $(g, V)
\mapsto g(V)$. This induces an action on $\bigoplus_v
M(\Gr_\cP(v,\tilde q^w))$ given by
\[
  (g, f) \mapsto f \circ g^{-1},\quad f \in
  \bigoplus_v M(\Gr_\cP(v,\tilde q^w)),\quad g \in \Aut_\cP q_w.
\]
This action clearly commutes with the operators $E_i$ and $F_i$ and
thus induces an action on $L_w$.  Such actions do not seem to be
clear in the original quiver variety picture.  Similar actions have
been considered by Lusztig \cite[\S1.22]{Lus00b} in the case when
$Q$ is of finite type.


\section{Relation to Lusztig's grassmannian realization}
\label{sec:injective-versus-projective}

In \cite{Lus98,Lus00b}, Lusztig gave a grassmannian type realization
of the lagrangian Nakajima quiver varieties inside the projective
modules $p^w$.  In the case when $Q$ is a quiver of finite type, the
injective hulls of the simple objects are also projective covers (of
different simple objects).  Thus, Lusztig's and our construction are
closely related.  In this section, we extend Lusztig's construction
to give a realization of the Demazure quiver varieties.  We then
give a precise relationship between his construction and ours in the
finite type case. We will see that the natural identification of the
two constructions corresponds to the Chevalley involution on the
level of representations of the Lie algebra $\g$ associated to our
quiver.

\subsection{Lusztig's construction and Demazure quiver varieties}

\begin{Definition} \label{def:PQG}
  For $V \in \cP$-Mod, define
  \[
    \tilde \Gr_\cP(V) = \{U \in \Gr_\cP(V) \ |\ \cP_n \cdot V
    \subseteq U \text{ for some } n \in \N\}.
  \]
  In other words, $\tilde \Gr_\cP(V)$ consists of all
  $\cP$-submodules of
  $V$ such that the quotient $V/U$ is nilpotent.  For $u \in \N Q_0$, we define
  \[
    \tilde \Gr_\cP(u,V) = \{U \in \tilde \Gr_\cP(V)\ |\ \dim_{Q_0} (V/U) =
    u\}.
  \]
\end{Definition}

\begin{Proposition} \label{prop:QGproj=QV}
  Fix $v,w \in \N Q_0$. Then $\mathfrak{L}(v,w)$ is isomorphic to
  $\tilde \Gr_\cP(v,p^w)$ as an algebraic variety.
\end{Proposition}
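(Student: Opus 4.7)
The plan is to mirror the proof of Theorem~\ref{thm:QG=QV}, replacing the injective hull $q^w$ by the projective cover $p^w$ and dualizing the universal property exploited in Proposition~\ref{prop:unique-injection}. The central technical input will be a projective analogue of Proposition~\ref{prop:unique-injection}: if $V$ is any nilpotent $\cP$-module and $\iota : s^w \to p^w$ is a $\cP_0$-module section of the canonical top projection $p^w \twoheadrightarrow p^w/\cP_+ p^w = s^w$, then every $\cP_0$-module map $\tau : s^w \to V$ extends uniquely to a $\cP$-module map $\gamma : p^w \to V$ with $\gamma \circ \iota = \tau$, and $\gamma$ is surjective if and only if the induced map $s^w \to V/\cP_+ V$ is surjective. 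Existence is immediate from the projective universal property $\Hom_\cP(p^w,V) \cong \Hom_{\cP_0}(s^w, V)$, while uniqueness I would prove by induction along the descending filtration $V \supseteq \cP_+ V \supseteq \cP_+^2 V \supseteq \cdots$, mirroring the ascending socle filtration argument used in Proposition~\ref{prop:unique-injection}.

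Given this lifting statement, I would construct mutual inverses as follows. For $U \in \tilde \Gr_\cP(v, p^w)$, the quotient $V = p^w/U$ is nilpotent of graded dimension $v$, supplying $x \in \Lambda(V)$; composing the quotient $p^w \to V$ with the section $\iota$ produces a $\cP_0$-module map $s^w \to V$, which after transposition (to match the framing direction used in the quiver variety) gives the companion map $t$, and stability of $(x,t)$ reflects the fact that the images of the idempotents $e_i$ in $V$ generate $V$ as a $\cP$-module. Conversely, given $(x,t) \in \Lambda(V,W)^\st$, the projective lifting statement yields a unique $\cP$-module map $\gamma : p^w \to V$ with the prescribed behaviour on the top; stability forces $\gamma$ to be surjective, so $U = \ker \gamma$ defines an element of $\tilde \Gr_\cP(v,p^w)$. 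Both assignments are $\Aut_{\cP_0} V$-equivariant and algebraic, so they descend to a bijective algebraic map $\tilde \Gr_\cP(v,p^w) \to \mathfrak{L}(v,w)$.

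To upgrade this bijection to a homeomorphism via the projective variety argument used at the end of the proof of Theorem~\ref{thm:QG=QV}, I must check that $\tilde \Gr_\cP(v,p^w)$ is itself a projective variety. Although $p^w$ may be infinite-dimensional, every $U \in \tilde \Gr_\cP(v,p^w)$ automatically contains $\cP_{\ge n} p^w$ for some $n$ depending only on $v$, since the nilpotency length of a nilpotent $\cP$-module of fixed graded dimension $v$ is uniformly bounded; hence $\tilde \Gr_\cP(v,p^w)$ embeds as a closed subvariety of an ordinary finite-dimensional grassmannian inside $p^w/\cP_{\ge n} p^w$ and is therefore projective. The main obstacle will be the uniqueness half of the projective analogue of Proposition~\ref{prop:unique-injection}: the induction has to run on a filtration pointing in the opposite direction from the socle filtration, and one must verify that pinning down $\gamma$ on the top via $\iota$ in fact forces $\gamma$ to be unique on each successive quotient $V/\cP_+^n V$. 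A secondary subtlety is matching the conventions for the framing data $t$ between the injective and projective pictures, which amounts to fixing an identification $s^w \cong W$ and keeping track of the directions of the relevant $\cP_0$-module maps.
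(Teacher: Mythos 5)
Your proposal has a genuine gap, and it sits exactly at the step you dismiss as a ``secondary subtlety'': the direction of the framing map. From $U \in \tilde\Gr_\cP(v,p^w)$ you form $V = p^w/U$ with its induced structure $x$ and a natural $\cP_0$-map $s^w \to V$ (images of the generators), but the paper's $\mathfrak{L}(v,w)$ requires a map $t : V \to s^w$ with the stability condition that $\ker t$ contains no nonzero submodule. ``Transposing'' the map $s^w \to V$ produces a map $V^* \to (s^w)^*$, not a map out of $V$, and there is no way to keep the pair $(V,x)$ and obtain a stable $t$: stability of $(x,t)$ is equivalent (via Theorem~\ref{thm:QG=QV}) to $V$ embedding in $q^w$, i.e.\ to a bound on the socle of $V$, whereas surjectivity of a map $p^w \to V$ is a bound on the top of $V$; these are different conditions. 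Concretely, take $Q$ of type $A_2$, $w = 1$ (the first vertex), $v = (1,1)$. The unique stable point of $\mathfrak{L}(v,w)$ has underlying module $q^1$, whose top is $s^2$, so there is no surjection $p^w = p^1 \to q^1$ whatsoever; your inverse construction (``stability forces $\gamma$ to be surjective'') cannot even get started. Meanwhile the unique point of $\tilde\Gr_\cP(v,p^1)$ is $U=0$ with quotient $p^1 \cong q^2 \cong (q^1)^*$. The true correspondence therefore passes through vector-space duality: the module attached to $U$ is $(p^w/U)^*$ with the transposed structure maps (which again satisfy the preprojective relations), the generator map dualizes to a framing $t$ out of the dual module, and generation of $p^w/U$ by the top corresponds to stability of the dual pair. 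With that twist inserted your scheme can be carried out (and in fact reduces the statement to Theorem~\ref{thm:QG=QV}), but as written the point-to-point map is not well defined and its claimed inverse fails.

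Two further remarks. First, the part you flag as the main obstacle -- uniqueness in the projective analogue of Proposition~\ref{prop:unique-injection} -- is actually unproblematic: any $\cP$-map into a nilpotent module factors through $p^w/\cP_{\ge n}p^w$ for some $n$, where the image of a $\cP_0$-section of the top generates, so uniqueness follows at once; similarly your uniform-nilpotency bound (one may take $n = \dim_\C v$) and the resulting projectivity of $\tilde\Gr_\cP(v,p^w)$ are fine, and the projectivity-plus-bijectivity argument for upgrading to a homeomorphism matches the paper. Second, note that the paper does not reprove the bijection at all: it cites Lusztig's canonical bijection \cite[Theorem~2.26]{Lus98} and only adds the topological argument from the proof of Theorem~\ref{thm:QG=QV}. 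So a self-contained proof along your lines is a genuinely different (and worthwhile) route, but it must incorporate the dualization described above, which is also the source of the Chevalley-involution phenomenon discussed in Section~\ref{sec:injective-versus-projective}.
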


\begin{proof}
  This is proven in \cite[Corollary~3.2]{Shi10}.  Note that, in \cite{Shi10}, a different stability condition is used in the definition of $\mathfrak{L}(v,w)$.  However, it is well-known that the different stability conditions give rise to isomorphic varieties.  We refer the reader to \cite{Nak96} for a discussion of various stability conditions.
\end{proof}

\begin{Proposition} \label{prop:proj-extremal}
  For $v \in \N Q_0$, the following statements are equivalent:
  \begin{enumerate}
    \item \label{prop-item:proj-extremal1} $v$ is $w$-extremal,
    \item \label{prop-item:proj-extremal2} $\mathfrak{L}(v,w)$ consists of a single point,
    \item \label{prop-item:proj-extremal3} $\tilde \Gr_\cP(v,p^w)$ consists of a single point, and
    \item \label{prop-item:proj-extremal4} there is a unique $\cP$-submodule $V$ of $p^w$
    of codimension $v$ such that $p^w/V$ is nilpotent.
  \end{enumerate}
\end{Proposition}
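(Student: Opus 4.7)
The plan is to follow the same outline as the proof of Proposition~\ref{prop:extremal}, since the statements differ only in that the injective module $q^w$ is replaced by the projective module $p^w$ and submodules of fixed graded dimension are replaced by submodules of fixed graded codimension (with the nilpotency requirement on the quotient). The key ingredient already in place is Proposition~\ref{prop:QGproj=QV}, which furnishes a homeomorphism $\tilde \Gr_\cP(v,p^w) \cong \mathfrak{L}(v,w)$; this immediately converts questions about $\tilde \Gr_\cP(v,p^w)$ into questions about $\mathfrak{L}(v,w)$.

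First I would cite \cite[Proposition~5.1]{Sav04a} to get the equivalence of \eqref{prop-item:proj-extremal1} and \eqref{prop-item:proj-extremal2}; this is exactly the same step used in the proof of Proposition~\ref{prop:extremal} and requires no modification. Next I would invoke Proposition~\ref{prop:QGproj=QV} to obtain the equivalence of \eqref{prop-item:proj-extremal2} and \eqref{prop-item:proj-extremal3}: since a homeomorphism sends single points to single points, $\mathfrak{L}(v,w)$ is a single point if and only if $\tilde \Gr_\cP(v,p^w)$ is.

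Finally, the equivalence of \eqref{prop-item:proj-extremal3} and \eqref{prop-item:proj-extremal4} is essentially tautological from Definition~\ref{def:PQG}. Unpacking that definition, a point of $\tilde \Gr_\cP(v,p^w)$ is precisely a $\cP$-submodule $V \subseteq p^w$ with $\dim_{Q_0}(p^w/V) = v$ and $p^w/V$ nilpotent (the latter being equivalent to $\cP_n \cdot p^w \subseteq V$ for some $n$). Thus $\tilde \Gr_\cP(v,p^w)$ consists of a single point if and only if there is a unique such submodule. There is no real obstacle in this proof; it is a straightforward translation via Proposition~\ref{prop:QGproj=QV} of the already-established Proposition~\ref{prop:extremal}, with the only mild subtlety being to keep track that on the projective side one parametrizes by the \emph{codimension} (and imposes nilpotency of the quotient) rather than the dimension of the submodule.
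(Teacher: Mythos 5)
Your proof matches the paper's argument exactly: the equivalence of (i) and (ii) is cited from \cite[Proposition~5.1]{Sav04a}, the equivalence of (ii) and (iii) follows from Proposition~\ref{prop:QGproj=QV}, and the equivalence of (iii) and (iv) is immediate from Definition~\ref{def:PQG}. Your additional remarks about codimension versus dimension are a correct unpacking of the same steps, so there is nothing to change.
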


\begin{proof}
The equivalence of \eqref{prop-item:proj-extremal1} and
\eqref{prop-item:proj-extremal2} is given in
\cite[Proposition~5.1]{Sav04a}. The equivalence of
\eqref{prop-item:proj-extremal2} and
\eqref{prop-item:proj-extremal3} follows from
Proposition~\ref{prop:QGproj=QV}.  Finally, the equivalence of
\eqref{prop-item:proj-extremal3} and
\eqref{prop-item:proj-extremal4} follows directly from
Definition~\ref{def:PQG}
\end{proof}

\begin{Definition} \label{def:PDQG}
  For $\sigma \in \mathcal{W}$, we let $p^{w,\sigma}$ denote the
  unique submodule of $p^w$ of graded codimension $\sigma \cdot_w
  0$ and define
  \[
    \tilde \Gr_{Q,\sigma}(v,p^w) = \{V \in \tilde \Gr_\cP(v,p^w)\ |\
    p^{w,\sigma} \subseteq V\}.
  \]
\end{Definition}

\begin{Proposition} \label{prop:PDQG=DWV}
  Fix $\sigma \in \mathcal{W}$ and $v, w \in \N Q_0$.  Then
  $\tilde \Gr_{Q,\sigma}(v,p^w)$ is isomorphic to the Demazure quiver variety
  $\mathfrak{L}_{\sigma}(v,w)$.
\end{Proposition}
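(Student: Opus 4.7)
The plan is to show that the homeomorphism $\phi : \tilde\Gr_\cP(v,p^w) \xrightarrow{\sim} \mathfrak{L}(v,w)$ from Proposition~\ref{prop:QGproj=QV} restricts to the desired bijection. Since $\tilde\Gr_{Q,\sigma}(v,p^w)$ is a closed subvariety of $\tilde\Gr_\cP(v,p^w)$ (cut out by the containment condition $p^{w,\sigma}\subseteq U$) and $\mathfrak{L}_\sigma(v,w)$ is a closed subvariety of $\mathfrak{L}(v,w)$, once the set-theoretic equality $\phi(\tilde\Gr_{Q,\sigma}(v,p^w))=\mathfrak{L}_\sigma(v,w)$ is established, the closed-map argument used in the proof of Theorem~\ref{thm:QG=QV} will automatically upgrade the restriction of $\phi$ to a homeomorphism between these subvarieties.

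To establish the set-theoretic equality I will verify, for each $U\in \tilde\Gr_\cP(v,p^w)$, that $p^{w,\sigma}\subseteq U$ if and only if $\phi(U)\in \mathfrak{L}_\sigma(v,w)$. The key input is Proposition~\ref{prop:proj-extremal}, which identifies $p^{w,\sigma}$ as the unique element of $\tilde\Gr_\cP(\sigma\cdot_w 0, p^w)$ and, under $\phi$, as the extremal point $[x^{w,\sigma},t^{w,\sigma}]$. The containment $p^{w,\sigma}\subseteq U$ is then the natural projective-side expression of a ``dominated-by-$(x^{w,\sigma},t^{w,\sigma})$'' relation on the quiver variety side. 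To bridge this projective-side condition with the subrepresentation definition of $\mathfrak{L}_\sigma(v,w)$, I will use the $\cP$-module isomorphism $p^w/p^{w,\sigma}\cong q^{w,\sigma}$ (both realize the unique quiver variety point $[x^{w,\sigma},t^{w,\sigma}]$ and are therefore isomorphic as $\cP$-modules-with-$t$-map). Combined with Proposition~\ref{prop:DQG=DQV}, which already identifies $\Gr_\cP(v,q^{w,\sigma})$ with $\mathfrak{L}_\sigma(v,w)$ via the subrepresentation characterization on the injective side, this transfers the desired equivalence to the projective setting.

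The principal obstacle is the structural bookkeeping: making precise the naturality of Lusztig's bijection $\phi$ with respect to submodule inclusions in $p^w$, and the compatibility of the identification $p^w/p^{w,\sigma}\cong q^{w,\sigma}$ with the two bijections of Theorem~\ref{thm:QG=QV} and Proposition~\ref{prop:QGproj=QV}. Once these are established, the proof becomes essentially formal, directly paralleling the argument of Proposition~\ref{prop:DQG=DQV}.
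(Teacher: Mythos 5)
There is a genuine gap, and it sits exactly at the step you delegate to ``structural bookkeeping'': the bridging isomorphism $p^w/p^{w,\sigma}\cong q^{w,\sigma}$ is false in general. Lusztig's parametrization does \emph{not} realize the module underlying a point $[x,t]\in\mathfrak{L}(v,w)$ as the quotient $p^w/U$; the quotient is related to that module by a duality which exchanges socle and top. Concretely, take $Q$ of type $A_2$, $w=1$ (the first vertex) and $\sigma=\sigma_0$. Then $\sigma_0\cdot_w 0=\dim_{Q_0}p^w$ (Lemma~\ref{lem:dim-p-q}), so $p^{w,\sigma_0}=0$ and your claim reads $p^w\cong q^{w,\sigma_0}=q^w$; but by Proposition~\ref{prop:injective=projective} one has $p^w\cong q^{\theta(w)}$, and $q^{\theta(w)}\not\cong q^w$ since $\theta(1)=2$ (the socles $s^{\theta(w)}$ and $s^w$ differ). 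In this example $p^1$ has socle $s^2$ while the module of the unique point of $\mathfrak{L}(v,w)$ (namely $q^{w,\sigma_0}=q^1$) has socle $s^1$, so ``both realize the unique quiver variety point, hence are isomorphic'' is exactly the misreading: the injective picture encodes a point by its module, the projective picture encodes it by a submodule $U\subseteq p^w$ whose quotient is the \emph{dual} of that module. Relatedly, even granting an identification, your transfer via Proposition~\ref{prop:DQG=DQV} silently replaces ``is a quotient of'' by ``is a submodule of'' the same module, which is not an equivalence without the dualization step.

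The correct way to close the argument (and what the paper's one-line proof, citing only Definitions~\ref{def:DQV} and~\ref{def:PDQG}, implicitly does) is to unwind Lusztig's bijection directly rather than detour through $q^{w,\sigma}$: a containment $p^{w,\sigma}\subseteq U$ is the same as a factorization of framed surjections $p^w\twoheadrightarrow p^w/p^{w,\sigma}\twoheadrightarrow p^w/U$, and under the duality built into the correspondence of Proposition~\ref{prop:QGproj=QV} this becomes an embedding of the framed $\cP$-module representing $\psi_w(U)$ into the one representing the extremal point, i.e.\ precisely the condition ``$(x,t)$ is isomorphic to a subrepresentation of $(x^{w,\sigma},t^{w,\sigma})$'' of Definition~\ref{def:DQV}. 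Your outer framework (restrict the homeomorphism of Proposition~\ref{prop:QGproj=QV}; a restriction of a homeomorphism to the closed subvariety cut out by $p^{w,\sigma}\subseteq U$ is automatically a homeomorphism onto its image, so the closed-map argument is not even needed) is fine; it is the set-theoretic identification that must be redone with the duality made explicit.
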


\begin{proof}
  This follows immediately from Definitions~\ref{def:DQV}
  and~\ref{def:PDQG}, and Proposition~\ref{prop:QGproj=QV}.
\end{proof}

\subsection{Relation between the projective and injective
constructions}

We now suppose $Q$ is of finite type and let $\g$ be the Kac-Moody
algebra whose Dynkin diagram is the underlying graph of $Q$. Let
$\sigma_0$ be the longest element of the Weyl group of $\g$. There
is a unique Dynkin diagram automorphism $\theta$ such that
$-w_0(\alpha_i) = \alpha_{\theta(i)}$.  Extend $\theta$ to an
automorphism of the root lattice $\bigoplus_{i \in Q_0} \Z \alpha_i$
by linearly extending the map $\alpha_i \mapsto \alpha_{\theta(i)}$.
We also have an involution of $\N Q_0$ given by $w \mapsto
\theta(w)$ where $\theta(w)_i = w_{\theta(i)}$.

\begin{Definition}[Chevalley involution]
The \emph{Chevalley involution} $\zeta$ of $\g$ is given by
\[
  \zeta(E_i) = F_i,\quad \zeta(F_i) = E_i,\quad \zeta(H_i) = -H_i.
\]
For any representation $V$ of $\g$, let $^\zeta V$ be the
representation with the same underlying vector space as $V$, but
with the action of $\g$ twisted by $\zeta$.  More precisely, the
$\g$-action on $^\zeta V$ is given by $(a,v) \mapsto \zeta(a) \cdot
v$.
\end{Definition}

For a dominant weight $\lambda$ of $\g$, let $L_\lambda$ denote the
corresponding irreducible highest-weight representation and let
$v_\lambda$ be a highest weight vector.  Recall that an isomorphism
of irreducible representations is uniquely determined by the image
of $v_\lambda$.  The following lemma is well known.

\begin{Lemma} \label{lem:three-isoms} The lowest weight of $L_\lambda$ is $\sigma_0(\lambda)
= - \theta(\lambda)$.  If $v_{-\theta(\lambda)}$ denotes a lowest
weight vector, then the map $v_\lambda \mapsto v_{-\theta(\lambda)}$
induces an isomorphism $^\zeta L_{\lambda} \cong
L_{\theta(\lambda)}$.
\end{Lemma}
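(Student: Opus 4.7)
The plan is to verify the three assertions --- the identification of the lowest weight, the equality $\sigma_0(\lambda) = -\theta(\lambda)$, and the isomorphism $^\zeta L_\lambda \cong L_{\theta(\lambda)}$ --- by direct appeal to standard facts about the Weyl group action on weights and twists by algebra automorphisms.

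First, I would show that $\sigma_0(\lambda)$ is the lowest weight of $L_\lambda$. The set of weights of $L_\lambda$ is $\mathcal{W}$-stable, and $\sigma_0$ is the unique element sending the dominant Weyl chamber to its opposite; hence $\sigma_0(\lambda)$ is the unique anti-dominant weight in the $\mathcal{W}$-orbit of $\lambda$, which must therefore be the lowest weight. To identify $\sigma_0(\lambda)$ with $-\theta(\lambda)$, I would first extend $\theta$ from the root lattice to the weight lattice by $\omega_i \mapsto \omega_{\theta(i)}$ (which is forced by the original formula on roots) and then verify the equality on fundamental weights. Using $\sigma_0^{-1} = \sigma_0$ together with the defining property $-\sigma_0(\alpha_j) = \alpha_{\theta(j)}$, the pairing with simple coroots gives $\langle \sigma_0(\omega_i), h_j \rangle = \langle \omega_i, \sigma_0(h_j) \rangle = -\delta_{i,\theta(j)}$, so $\sigma_0(\omega_i) = -\omega_{\theta(i)}$, and the general case follows by $\Z$-linearity.

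For the isomorphism, the key observation is that twisting by $\zeta$ swaps $E_i \leftrightarrow F_i$ and negates $H_i$, so weights in $^\zeta L_\lambda$ are the negatives of those in $L_\lambda$, and vectors annihilated by all lowering operators become vectors annihilated by all raising operators (and vice versa). Thus the lowest weight vector $v_{-\theta(\lambda)}$ of $L_\lambda$ is killed by every $E_i$ acting in $^\zeta L_\lambda$ (since these are the original $F_i$'s) and, in $^\zeta L_\lambda$, carries weight $-(-\theta(\lambda)) = \theta(\lambda)$. Hence it serves as a highest-weight generator of $^\zeta L_\lambda$ of weight $\theta(\lambda)$. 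Since $L_\lambda$ is irreducible and twisting by an algebra automorphism preserves irreducibility, $^\zeta L_\lambda$ is an irreducible highest-weight module of highest weight $\theta(\lambda)$ and is therefore isomorphic to $L_{\theta(\lambda)}$. The explicit formula $v_\lambda \mapsto v_{-\theta(\lambda)}$ in the statement records the fact that the unique (up to scalar) such isomorphism is determined by sending the chosen highest-weight generator on one side to its counterpart on the other.

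There is no substantive obstacle here: each step is an immediate consequence of standard representation-theoretic principles. The only mild subtleties are the implicit extension of $\theta$ from the root lattice to the full weight lattice (so that $\theta(\lambda)$ makes sense for an arbitrary dominant weight) and the careful bookkeeping of which side of the isomorphism each named vector belongs to, since $^\zeta L_\lambda$ and $L_\lambda$ share an underlying vector space while $L_{\theta(\lambda)}$ is a separate module.
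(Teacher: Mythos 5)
Your proof is correct. The paper gives no argument for this lemma at all (it is stated as ``well known''), and what you wrote is exactly the standard argument it implicitly relies on: $\sigma_0(\omega_i)=-\omega_{\theta(i)}$ by pairing with simple coroots, the lowest weight is the antidominant extremal weight $\sigma_0(\lambda)$, and the $\zeta$-twist turns the lowest weight vector of $L_\lambda$ into a singular vector of weight $\theta(\lambda)$ generating the irreducible module $^\zeta L_\lambda$, whence $^\zeta L_\lambda \cong L_{\theta(\lambda)}$. Your two flagged subtleties (extending $\theta$ from the root lattice to the weight lattice via $\omega_i \mapsto \omega_{\theta(i)}$, and tracking whether the named vectors live in $L_\lambda$, $^\zeta L_\lambda$, or $L_{\theta(\lambda)}$) are indeed the only loose ends in the paper's formulation, and you resolve them appropriately.
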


\begin{Lemma} \label{lem:dim-p-q}
  We have $\dim_{Q_0} p^w = \dim_{Q_0} q^w = \sigma_0 \cdot_w 0$.
\end{Lemma}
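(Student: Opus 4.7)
The plan is to show, for each of $\dim_{Q_0} q^w$ and $\dim_{Q_0} p^w$, that it is both a $w$-extremal vector and componentwise at least $\sigma_0 \cdot_w 0$; I then observe that $\sigma_0 \cdot_w 0$ is the componentwise maximum among $w$-extremal vectors, which forces equality.

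Before handling each module separately, I would establish the maximality claim. If $v$ is $w$-extremal, then $\alpha_v = \omega_w - \sigma(\omega_w)$ for some $\sigma \in \mathcal{W}$. Since $\sigma_0(\omega_w)$ is the lowest weight of the irreducible highest weight representation $L_{\omega_w}$, we have $\sigma(\omega_w) - \sigma_0(\omega_w) \in \sum_i \N \alpha_i$, i.e.\ $\alpha_{\sigma_0 \cdot_w 0} - \alpha_v \in \sum_i \N \alpha_i$. Since the $\alpha_i$ are linearly independent, this is equivalent to $v \le \sigma_0 \cdot_w 0$ componentwise in $\N Q_0$.

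For $q^w$: the whole module $q^w$ is trivially the unique submodule of itself of graded dimension $\dim_{Q_0} q^w$, so $\Gr_\cP(\dim_{Q_0} q^w, q^w)$ is a single point, and Proposition~\ref{prop:extremal} forces $\dim_{Q_0} q^w$ to be $w$-extremal. On the other hand, $q^{w,\sigma_0} \subseteq q^w$ has graded dimension $\sigma_0 \cdot_w 0$ by Definition~\ref{def:DQG}, so $\dim_{Q_0} q^w \ge \sigma_0 \cdot_w 0$. Combined with the upper bound from the previous paragraph, this gives $\dim_{Q_0} q^w = \sigma_0 \cdot_w 0$.

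For $p^w$: the argument is parallel, using Proposition~\ref{prop:proj-extremal} in place of Proposition~\ref{prop:extremal}. By Remark~\ref{rem:p-nilpotent}, $p^w$ is nilpotent in the finite type setting, so the zero submodule satisfies $p^w/\{0\} = p^w$ nilpotent and has graded codimension $\dim_{Q_0} p^w$; it is manifestly the unique submodule of this codimension, so $\tilde{\Gr}_\cP(\dim_{Q_0} p^w, p^w)$ is a single point. Proposition~\ref{prop:proj-extremal} then yields that $\dim_{Q_0} p^w$ is $w$-extremal. Meanwhile $p^{w,\sigma_0} \subseteq p^w$ has codimension $\sigma_0 \cdot_w 0$ by Definition~\ref{def:PDQG}, giving $\dim_{Q_0} p^w \ge \sigma_0 \cdot_w 0$, and equality again follows from the maximality of $\sigma_0 \cdot_w 0$. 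No real obstacle is anticipated: the entire proof is a counting argument on top of the Weyl-extremality framework already developed, with the key input being that $\sigma_0(\omega_w)$ is the lowest weight of $L_{\omega_w}$.
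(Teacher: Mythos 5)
Your argument is correct in substance and runs along essentially the same lines as the paper's (very terse) proof: everything reduces to the fact that $\sigma_0(\omega_w)$ is the lowest weight of $L_{\omega_w}$, transported through the homeomorphisms of Theorem~\ref{thm:QG=QV} and Proposition~\ref{prop:QGproj=QV}. The paper leaves implicit the bookkeeping that you spell out: you route the lowest-weight fact through the extremality equivalences (Propositions~\ref{prop:extremal} and~\ref{prop:proj-extremal}), observe that the full module (resp.\ the zero submodule) is the unique submodule of top graded dimension (resp.\ top codimension with nilpotent quotient), and sandwich $\dim_{Q_0}q^w$ and $\dim_{Q_0}p^w$ between $\sigma_0\cdot_w 0$ (via $q^{w,\sigma_0}$, $p^{w,\sigma_0}$) and the componentwise maximum of the $w$-extremal vectors. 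This is a legitimate expansion of the paper's one-liner rather than a different method, and the maximality claim ($v\le\sigma_0\cdot_w 0$ for $w$-extremal $v$, because every weight of $L_{\omega_w}$ lies above the lowest weight) is fine.

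One citation must be changed, however: for the $p^w$ half you invoke Remark~\ref{rem:p-nilpotent} to get nilpotency of $p^w$ in finite type, but that remark is justified in the paper by Proposition~\ref{prop:injective=projective}, whose proof uses Lemma~\ref{lem:dim-p-q} --- the very statement you are proving --- so as written your argument is circular. The fix is immediate: in finite type $\cP$ is finite-dimensional (Proposition~\ref{prop:path-algebra-type}), so $\cP_n=0$ for $n$ large and every $\cP$-module, in particular $p^w=\bigoplus_{i}(\cP e_i)^{\oplus w_i}$, is nilpotent; with that substitution the zero submodule indeed lies in $\tilde\Gr_\cP(\dim_{Q_0}p^w,p^w)$ and the rest goes through. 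For the same reason you should note explicitly that $q^w$ is finite-dimensional in finite type (again from Proposition~\ref{prop:path-algebra-type}, since $q^i\subseteq\Hom_\C(e_i\cP,\C)$), which is what justifies ``the unique submodule of full graded dimension is $q^w$ itself.''
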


\begin{proof}
  Since the lowest weight of the
  representation $L(w)$ is $\sigma_0(w)$, the result follows
  immediately from Theorem~\ref{thm:QG=QV}
  and Proposition~\ref{prop:QGproj=QV}.
\end{proof}

\begin{Lemma} \label{lem:max-v-dim}
  For $w \in \N Q_0$, we have $\sigma_0 \cdot_w 0 = \sigma_0
  \cdot_{\theta(w)} 0$.  Furthermore, $\theta(\sigma_0 \cdot_w 0) =
  \sigma_0 \cdot_w 0$.
\end{Lemma}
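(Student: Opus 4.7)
The plan is to compute both sides directly from the definition of $\sigma \cdot_w v$ together with the defining property of $\theta$, and observe that the resulting expression is manifestly symmetric under $w \leftrightarrow \theta(w)$ and under $\theta$.

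First I would recall that the longest element $\sigma_0$ sends $\alpha_i$ to $-\alpha_{\theta(i)}$ by the definition of $\theta$, and that this relation extends linearly to the root lattice and dually to the weight lattice, giving $\sigma_0(\omega_i) = -\omega_{\theta(i)}$ for each fundamental weight. Hence for any $w = \sum_i w_i i \in \N Q_0$ we have $\sigma_0(\omega_w) = -\omega_{\theta(w)}$, where I use the identification $\theta(w)_i = w_{\theta(i)}$.

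Next, unwinding the definition, $\sigma_0 \cdot_w 0$ is the unique $u \in \Z Q_0$ with $\sigma_0(\omega_w) = \omega_w - \alpha_u$. Combined with the previous step this reads
\[
  \alpha_u \;=\; \omega_w - \sigma_0(\omega_w) \;=\; \omega_w + \omega_{\theta(w)}.
\]
The right-hand side is symmetric in $w$ and $\theta(w)$, so the same computation applied to $\theta(w)$ in place of $w$ yields $\alpha_{\sigma_0 \cdot_{\theta(w)} 0} = \omega_{\theta(w)} + \omega_{\theta\theta(w)} = \omega_{\theta(w)} + \omega_w$, giving the first equality $\sigma_0 \cdot_w 0 = \sigma_0 \cdot_{\theta(w)} 0$ after matching coefficients in $\alpha_i$ (the map $u \mapsto \alpha_u$ is injective on $\Z Q_0$).

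For the second assertion, apply $\theta$ to both sides of $\alpha_{\sigma_0 \cdot_w 0} = \omega_w + \omega_{\theta(w)}$; since $\theta$ permutes fundamental weights via $\theta(\omega_i) = \omega_{\theta(i)}$, the right-hand side is fixed, and by linearity $\theta(\alpha_u) = \alpha_{\theta(u)}$, so $\theta(\sigma_0 \cdot_w 0) = \sigma_0 \cdot_w 0$. There is no real obstacle here; the only subtlety to watch is checking that $\theta(\omega_i) = \omega_{\theta(i)}$ (which follows from $\theta$ being a Dynkin diagram automorphism, hence permuting simple roots and, by duality with the Cartan matrix, permuting fundamental weights in the same way), so that the calculations in the root lattice and the weight lattice fit together correctly.
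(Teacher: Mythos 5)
Your proof is correct and follows essentially the same route as the paper: both compute $\alpha_{\sigma_0\cdot_w 0}=\omega_w-\sigma_0(\omega_w)=\omega_w+\omega_{\theta(w)}$ and deduce both claims from the symmetry of this expression and $\theta^2=\Id$. The extra details you supply (that $\sigma_0(\omega_i)=-\omega_{\theta(i)}$, injectivity of $u\mapsto\alpha_u$, and the compatibility of $\theta$ on the root and weight lattices) are exactly the points the paper leaves as ``follows easily.''
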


\begin{proof}
  Let $v = \sigma_0
  \cdot_w 0$.  Then $\alpha_v = \omega_w -
  \sigma_0 (\omega_w) = \omega_w + \theta(\omega_w)$ and the results
  follow easily from the fact that $\theta^2 = \Id$.
\end{proof}

\begin{Proposition} \label{prop:injective=projective}
  If $Q$ is a quiver of finite type and $w \in \N Q_0$,
  then $p^w \cong q^{\theta(w)}$.
\end{Proposition}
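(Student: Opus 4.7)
The plan is to reduce to indecomposables by Krull-Schmidt and then to exploit the Frobenius algebra structure of the preprojective algebra in Dynkin type. By Lemma~\ref{lem:projectives} we have $p^w = \bigoplus_i (p^i)^{\oplus w_i}$, and by Lemma~\ref{lem:injectives} together with the reindexing $j = \theta(i)$ (using $\theta^2 = \id$ and $\theta(w)_j = w_{\theta(j)}$) we have $q^{\theta(w)} = \bigoplus_i (q^{\theta(i)})^{\oplus w_i}$. Both decompositions are into indecomposables in the finite-dimensional category (Proposition~\ref{prop:path-algebra-type}), so Krull--Schmidt reduces the statement to the claim $p^i \cong q^{\theta(i)}$ for each $i \in Q_0$.

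For this reduced statement, I would first show that $p^i$ is injective. The key external input is the classical fact that the preprojective algebra of a Dynkin quiver is a Frobenius algebra (Ringel, Malkin--Ostrik--Vybornov). Writing $\cP = \bigoplus_i p^i$ as a left $\cP$-module, this decomposition must simultaneously be one into indecomposable injectives, so by Lemma~\ref{lem:injectives} there is a permutation $\nu : Q_0 \to Q_0$ (the Nakayama permutation) with $p^i \cong q^{\nu(i)}$. Note that graded dimensions already match for the candidate $\nu = \theta$: by Lemma~\ref{lem:dim-p-q} and Lemma~\ref{lem:max-v-dim}, $\dim_{Q_0} p^i = \sigma_0 \cdot_i 0 = \sigma_0 \cdot_{\theta(i)} 0 = \dim_{Q_0} q^{\theta(i)}$, though this of course does not yet single out $\theta$ among all permutations.

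The main obstacle, and the substance of the proof, is pinning down $\nu = \theta$. My plan is to compute $\socle p^i$ directly: by Lemma~\ref{lem:socles} applied to the nilpotent module $p^i$, $\socle p^i = \{v \in p^i : \cP_+ \cdot v = 0\}$, which is concentrated in the top graded degree of $p^i$. Since $p^i \cong q^{\nu(i)}$ has simple socle $s^{\nu(i)}$, it suffices to show this top degree is supported at vertex $\theta(i)$. This can be extracted either from the explicit form of the Nakayama automorphism of $\cP$ in Dynkin type (the literature provides a non-degenerate bilinear form on $\cP$ pairing $e_i \cP e_j$ with $e_j \cP e_{\theta(i)}$), or from a combinatorial analysis of the longest paths from $i$ in $\tilde{Q}$ modulo the preprojective relations. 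Either route completes the identification $p^i \cong q^{\theta(i)}$ and hence the proposition.
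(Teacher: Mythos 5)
Your route is genuinely different from the paper's, and its skeleton is sound, but as written the decisive step is asserted rather than proven. The reduction to $p^i \cong q^{\theta(i)}$ is fine, and your two external inputs are true facts: $\cP$ is self-injective in Dynkin type, so $p^i \cong q^{\nu(i)}$ for some permutation $\nu$ of $Q_0$, and the Nakayama permutation is indeed $\nu = \theta$. The trouble is that, once self-injectivity is granted, the statement ``$\nu = \theta$'' \emph{is} the proposition (for $w=i$, the general case following by taking direct sums), so deferring it to ``the explicit form of the Nakayama automorphism of $\cP$ in Dynkin type'' --- or to a bilinear form pairing $e_i\cP e_j$ with $e_j\cP e_{\theta(i)}$ --- amounts to citing the result you are asked to prove, while your alternative, ``a combinatorial analysis of the longest paths from $i$ in $\tilde Q$ modulo the preprojective relations,'' is precisely the nontrivial computation that would constitute the proof and is not carried out. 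So either the citation must be made precise (turning the proof into a reference) or an actual argument that $\socle p^i \cong s^{\theta(i)}$ is still owed. Your intermediate observations are correct and useful framing: by Lemma~\ref{lem:socles} the socle of the nilpotent module $p^i$ sits in its top graded degree, and once $p^i$ is known to be injective indecomposable its socle is simple, so everything reduces to locating the vertex supporting that top degree; likewise your dimension check via Lemmas~\ref{lem:dim-p-q} and~\ref{lem:max-v-dim} is right but, as you note, does not distinguish $\theta$ from other permutations.

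For comparison, the paper obtains the missing fact without using the Frobenius property at all: with $v = \sigma_0 \cdot_i 0$, the single point $\tilde \Gr_\cP(v,p^i) \cong \mathfrak{L}(v,i)$ corresponds, in the quiver-variety realization of crystals \cite{S02}, to the lowest-weight element of $B_{\omega_i}$, whose weight is $\sigma_0(\omega_i) = -\omega_{\theta(i)}$; from the geometric description of the crystal operators one reads off $\dim_{Q_0} \socle p^i = \theta(i)$. Combining this with the dimension count of Lemmas~\ref{lem:dim-p-q} and~\ref{lem:max-v-dim} and the uniqueness statement of Proposition~\ref{prop:extremal} forces $p^i \cong q^{\theta(i)}$; in particular, self-injectivity of $\cP$ comes out as a consequence rather than going in as an input. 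If you want to keep your Frobenius-based approach, you should replace the sentence ``either route completes the identification'' by such a socle computation or by a precise literature reference for the Nakayama permutation of the Dynkin preprojective algebra.
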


\begin{proof}
  Since $p^w = \bigoplus_{i \in Q_0} (p^i)^{\oplus w_i}$
  and $q^w = \bigoplus_{i \in Q_0} (q^i)^{\oplus w_i}$,
  it suffices to prove the result for $w$ equal to
  $i$ for arbitrary $i \in Q_0$.

  Let $v = \sigma_0 \cdot_w 0 = \dim_{Q_0} p^i$.
  In the geometric realization of crystals via quiver varieties
  \cite{S02}, the point $\tilde \Gr_\cP(v,p^w) \cong \mathfrak{L}(v,w)$
  corresponds to the lowest weight element of the crystal
  $B_{\omega_i}$.  The lowest weight of the representation $L_{\omega_i}$
  is $\sigma_0 (\omega_i) = -\omega_{\theta(i)}$. Therefore, it
  follows from the geometric description of the crystals that
  $\dim_{Q_0} \socle p^i = \theta(i)$. By Lemmas~\ref{lem:dim-p-q}
  and~\ref{lem:max-v-dim}, we have
  \[
    \dim_{Q_0} p^i = \sigma_0 \cdot_w 0 = \sigma_0 \cdot_{\theta(w)} 0
    = \dim_{Q_0} q^{\theta(i)}.
  \]
  Thus, by Proposition~\ref{prop:extremal}, we have $p^i \cong
  q^{\theta(i)}$.
\end{proof}

\begin{Corollary} \label{cor:demazure-projective=injective}
  Suppose $Q$ is a quiver of finite type, $w \in \N Q_0$,
  and $\sigma \in \mathcal{W}$.  Then $q^{w,\sigma} \cong p^{\theta(w),
  \sigma \sigma_0}$.
\end{Corollary}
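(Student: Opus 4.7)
The plan is to combine Proposition~\ref{prop:injective=projective} with the uniqueness of submodules of $w$-extremal graded dimension from Proposition~\ref{prop:extremal}. The key step will be a weight computation showing that $p^{\theta(w),\sigma\sigma_0}$, viewed as a submodule of $p^{\theta(w)}$, has graded dimension equal to $\sigma \cdot_w 0$, matching the graded dimension of $q^{w,\sigma}$ in $q^w$.

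For the weight computation, I would unwind the definitions. By Definition~\ref{def:PDQG}, the module $p^{\theta(w),\sigma\sigma_0}$ has graded codimension $(\sigma\sigma_0) \cdot_{\theta(w)} 0$ inside $p^{\theta(w)}$, while Lemma~\ref{lem:dim-p-q} gives $\dim_{Q_0} p^{\theta(w)} = \sigma_0 \cdot_{\theta(w)} 0$. Identifying $\Z Q_0$ with the root lattice via $\sum v_i i \leftrightarrow \sum v_i \alpha_i$, the graded dimension of $p^{\theta(w),\sigma\sigma_0}$ therefore corresponds to
\[
  \alpha_{\sigma_0 \cdot_{\theta(w)} 0} - \alpha_{(\sigma\sigma_0) \cdot_{\theta(w)} 0}
  = (\sigma\sigma_0)(\omega_{\theta(w)}) - \sigma_0(\omega_{\theta(w)}).
\]
Lemma~\ref{lem:three-isoms} together with $\theta^2 = \id$ gives $\sigma_0(\omega_{\theta(w)}) = -\theta(\omega_{\theta(w)}) = -\omega_w$, so the above simplifies to $\omega_w - \sigma(\omega_w) = \alpha_{\sigma \cdot_w 0}$, as desired.

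To conclude, I would apply Proposition~\ref{prop:injective=projective}, which provides an isomorphism $p^{\theta(w)} \cong q^{\theta(\theta(w))} = q^w$ of $\cP$-modules. Under this isomorphism, $p^{\theta(w),\sigma\sigma_0}$ transports to a $\cP$-submodule of $q^w$ of graded dimension $\sigma \cdot_w 0$. Since $\sigma \cdot_w 0 \in \mathcal{W} \cdot_w 0$ is $w$-extremal by definition, Proposition~\ref{prop:extremal} asserts that $q^w$ has a unique submodule of this graded dimension, which by Definition~\ref{def:DQG} is precisely $q^{w,\sigma}$. Hence $p^{\theta(w),\sigma\sigma_0} \cong q^{w,\sigma}$. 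The only real obstacle is the bookkeeping in the weight computation---in particular getting the interaction of $\sigma_0$, $\sigma$, and $\theta$ correct---after which the uniqueness from Proposition~\ref{prop:extremal} makes the identification automatic.
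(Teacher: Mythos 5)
Your proposal is correct and follows essentially the same route as the paper: both rest on the identification $p^{\theta(w)} \cong q^w$ from Proposition~\ref{prop:injective=projective}, a short weight computation using $\sigma_0(\lambda) = -\theta(\lambda)$, and the uniqueness of submodules of $w$-extremal graded dimension. The only (immaterial) difference is direction: you match the graded dimension of $p^{\theta(w),\sigma\sigma_0}$ to $\sigma \cdot_w 0$ and invoke uniqueness on the injective side via Proposition~\ref{prop:extremal}, whereas the paper matches the codimension of $q^{w,\sigma}$ to $(\sigma\sigma_0)\cdot_{\theta(w)} 0$ and invokes uniqueness on the projective side.
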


\begin{proof}
  Let $\tau = \sigma \sigma_0$ (and so $\sigma = \tau \sigma_0$).  In light of
  Propositions~\ref{prop:extremal},~\ref{prop:proj-extremal} and~\ref{prop:injective=projective}
  and Definitions~\ref{def:DQG} and~\ref{def:PDQG}, it suffices to prove
  that the codimension of $q^{w,\sigma}$ in $q^w$ is $\tau
  \cdot_{\theta(w)} 0$.

  Let $y = \tau \cdot_{\theta(w)} 0$, so that $\tau (\theta(w)) = \theta(w) - \alpha_y$, that is,
  \[
    \alpha_y = \theta(w) - \tau (\theta(w)).
  \]
  Now, let
  \begin{gather*}
    v = \dim_{Q_0} q^w = \sigma_0 \cdot_w 0 \implies \sigma_0(w) = w
    - \alpha_v, \\
    u = \dim_{Q_0} q^{w,\sigma} = \sigma \cdot_w 0 \implies
    \sigma(w) = w - \alpha_u.
  \end{gather*}
  Therefore
  \begin{align*}
    \sum_{i \in Q_0} (v_i - u_i) \alpha_i &= - \sigma_0(w) + \sigma(w)
    \\
    &= \theta(w) + \tau \sigma_0 (w) \\
    &= \theta(w) - \tau (\theta(w)),
  \end{align*}
  and so $y=v-u$ as desired.
\end{proof}

\begin{Proposition} \label{prop:S=tS}
  If $Q$ is a quiver of finite type, then $\Gr_\cP(u,q^w) \cong
  \tilde \Gr_\cP((\sigma_0 \cdot_w 0) - u, p^{\theta(w)})$.
\end{Proposition}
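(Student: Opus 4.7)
The approach is to deduce the homeomorphism directly from the module isomorphism supplied by Proposition~\ref{prop:injective=projective}. Applying that proposition with $w$ replaced by $\theta(w)$ and using $\theta^2 = \id$, I obtain a $\cP$-module isomorphism $\phi : q^w \xrightarrow{\sim} p^{\theta(w)}$. Since $\phi$ is in particular a $\cP_0$-module isomorphism, it sends $\cP$-submodules of $q^w$ bijectively to $\cP$-submodules of $p^{\theta(w)}$ and preserves graded dimensions.

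Next I match up the numerical indices. By Lemma~\ref{lem:dim-p-q} we have $\dim_{Q_0} p^{\theta(w)} = \sigma_0 \cdot_{\theta(w)} 0$, which equals $\sigma_0 \cdot_w 0$ by Lemma~\ref{lem:max-v-dim}. Hence a submodule $U \subseteq p^{\theta(w)}$ satisfies $\dim_{Q_0} U = u$ precisely when $\dim_{Q_0}(p^{\theta(w)}/U) = (\sigma_0 \cdot_w 0) - u$, so restricting $\phi$ to submodules yields a bijection between $\Gr_\cP(u,q^w)$ and the set of submodules of $p^{\theta(w)}$ of graded codimension $(\sigma_0 \cdot_w 0) - u$. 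The nilpotency condition in Definition~\ref{def:PQG} is automatic because $Q$ is of finite type: by Proposition~\ref{prop:path-algebra-type} and Remark~\ref{rem:p-nilpotent}, $p^{\theta(w)}$ is finite-dimensional and nilpotent, so $\cP_n \cdot p^{\theta(w)} = 0 \subseteq U$ for all sufficiently large $n$ and any submodule $U$. Consequently the target of this bijection coincides with $\tilde \Gr_\cP((\sigma_0 \cdot_w 0) - u, p^{\theta(w)})$.

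It remains to promote this set-theoretic bijection to a homeomorphism. The bijection is the restriction of the isomorphism of ordinary Grassmannians of subspaces induced by the linear isomorphism $\phi$, so it and its inverse are algebraic. Both sides are projective varieties, and so the closed-map argument used in the proof of Theorem~\ref{thm:QG=QV} (the image of a projective variety under an algebraic map is closed, so a bijective algebraic map between projective varieties is a homeomorphism) finishes the argument. The whole proof is formal once Proposition~\ref{prop:injective=projective} is in hand; the only real content is the complementary-dimension bookkeeping provided by Lemma~\ref{lem:max-v-dim} and the observation that the nilpotency hypothesis in Definition~\ref{def:PQG} is vacuous in finite type, so I expect no genuine obstacle.
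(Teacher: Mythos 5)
Your proof is correct and follows essentially the same route as the paper: it identifies $q^w$ with $p^{\theta(w)}$ via Proposition~\ref{prop:injective=projective}, converts dimension $u$ to codimension $(\sigma_0 \cdot_w 0)-u$ using Lemmas~\ref{lem:dim-p-q} and~\ref{lem:max-v-dim}, and notes that the nilpotent-quotient condition in Definition~\ref{def:PQG} is vacuous in finite type by Remark~\ref{rem:p-nilpotent}. The paper phrases the identification as the two modules sharing one underlying representation $(x,V)$, so the homeomorphism is immediate; your extra closed-map argument is harmless but unnecessary.
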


\begin{proof}
  Let $(x,V)$ be the quiver representation corresponding to the
  $\mathcal{P}$-module $q^w$ and let $v = \dim_{Q_0} V = \sigma_0 \cdot_w 0$.
  By Proposition~\ref{prop:injective=projective}, $(x,V)$ also
  corresponds to the $\mathcal{P}$-module $p^{\theta(w)}$.
  By Remark~\ref{rem:p-nilpotent}, $\cP_n \cdot p^w = 0$ for sufficiently
  large $n$.  Therefore
  \begin{align*}
    \Gr_\cP(u,q^w) &= \{ U \subseteq V\ |\ x(U) \subseteq U,\ \dim
    U = u\} \\
    &= \{ U \subseteq V\ |\ x(U) \subseteq U,\ \dim_{Q_0} V/U = v-u\}
    \\
    &\cong \tilde \Gr_\cP \left(v-u,p^{\theta(w)}\right). \qedhere
  \end{align*}
\end{proof}

By Proposition~\ref{prop:S=tS}, we have
\begin{equation} \label{eq:Chevalley-isomorphisms}
  \xymatrix{
  \mathfrak{L}(u,w) & & \ar[ll]^(.7){\cong}_(.7){\phi_w(u)} \Gr_\cP(u,q^w)
  \cong
  \tilde \Gr_\cP((\sigma_0 \cdot_w 0) - u, p^{\theta(w)})
  \ar[rrr]_(.58){\cong}^(.58){\psi_{\theta(w)}((\sigma_0 \cdot_w 0)-u)}
  & & &
  \mathfrak{L}((\sigma_0 \cdot_w 0)-u,\theta(w))
  }
\end{equation}
where $\phi_w(u)$ is the isomorphism of Theorem~\ref{thm:QG=QV} (see
Corollary~\ref{cor:QG=QV-variety-map}), and $\psi_{\theta(w)}(u)$ is
the isomorphism of Proposition~\ref{prop:QGproj=QV}.  Define
\[
  \phi_w = (\phi_w(u))_u : \Gr_\cP(q^w) \to \bigsqcup_u
  \mathfrak{L}(u,w),\quad \psi_w = (\psi_w(u))_u : \tilde
  \Gr_\cP(p^w) \to \bigsqcup_u \mathfrak{L}(u,w).
\]

\begin{Theorem}
  The isomorphism $\psi_{\theta(w)} \circ \phi_w^{-1}$ induces the
  involution $\zeta$.  More precisely, we have $a \circ (\psi_{\theta(w)}
  \circ \phi_w^{-1})^* = (\psi_{\theta(w)}
  \circ \phi_w^{-1})^* \circ
  \zeta(a)$, $a \in \g$, as operators on $L_w$, where $(\psi_{\theta(w)} \circ
  \phi_w^{-1})^*$ denotes the pullback of functions along $\psi_{\theta(w)} \circ
  \phi_w^{-1}$.
\end{Theorem}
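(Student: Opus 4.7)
The plan is to verify the intertwining relation $a \circ T = T \circ \zeta(a)$ on the Chevalley generators $H_i$, $E_i$, $F_i$, where $T = (\psi_{\theta(w)} \circ \phi_w^{-1})^*$ denotes the pullback of constructible functions from $\bigsqcup_{\tilde u} \mathfrak{L}(\tilde u, \theta(w))$ to $\bigsqcup_u \mathfrak{L}(u, w)$. Since $\zeta$ is a Lie algebra automorphism and the two sides of the putative identity are both $\C$-linear, checking on these generators suffices.

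For $H_i$, the operator acts on $M(\mathfrak{L}(u,w))$ by the scalar $(w - Cu)_i = \langle H_i,\, \omega_w - \alpha_u \rangle$. Under $T$, the class on the injective side indexed by $u$ is identified with one on the projective side indexed by $\tilde u = (\sigma_0 \cdot_w 0) - u$. Using $\sigma_0(\omega_w) = -\omega_{\theta(w)}$, one computes $\alpha_{\sigma_0 \cdot_w 0} = \omega_w + \omega_{\theta(w)}$ and hence $\omega_{\theta(w)} - \alpha_{\tilde u} = -(\omega_w - \alpha_u)$. Therefore $\zeta(H_i) = -H_i$ acts on $M(\mathfrak{L}(\tilde u, \theta(w)))$ by exactly the scalar by which $H_i$ acts on $M(\mathfrak{L}(u,w))$, and the identity holds immediately for $H_i$.

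For $E_i$ and $F_i$, the plan is to lift Proposition~\ref{prop:S=tS} to a homeomorphism of nested-submodule varieties. Concretely, writing $v = \sigma_0 \cdot_w 0$, the same argument that identifies a submodule $U \subseteq q^w$ of dimension $u$ with a submodule of $p^{\theta(w)}$ of codimension $v-u$ (by reinterpreting the fixed ambient $\mathcal{P}$-module via Proposition~\ref{prop:injective=projective}) produces a homeomorphism
\[
  \Gr_\cP(u, u+i, q^w) \;\cong\; \tilde{\Gr}_\cP(v-u-i,\, v-u,\, p^{\theta(w)}),
\]
where the right-hand side parameterizes nested pairs $V' \subseteq V''$ of submodules of $p^{\theta(w)}$ of respective codimensions $v-u$ and $v-u-i$. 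Under this identification the projection $\pi_1^{\mathrm{inj}}$ onto the smaller submodule $U$ corresponds to the projection onto the submodule of \emph{larger} codimension, while $\pi_2^{\mathrm{inj}}$ onto the larger submodule $U'$ corresponds to the projection onto the submodule of \emph{smaller} codimension. Thus the two legs of the Hecke-type correspondence are literally the same variety on both sides, but labeled by interchanged projections.

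Consequently, pulling back the operators $E_i = (\pi_1^{\mathrm{inj}})_! (\pi_2^{\mathrm{inj}})^*$ and $F_i = (\pi_2^{\mathrm{inj}})_! (\pi_1^{\mathrm{inj}})^*$ through $T$ transports them to $(\pi_{\mathrm{sm}})_! (\pi_{\mathrm{la}})^*$ and $(\pi_{\mathrm{la}})_! (\pi_{\mathrm{sm}})^*$ on the projective side, where the subscripts refer to the submodule of smaller/larger codimension. These are precisely the operators that, through $\psi_{\theta(w)}$, realize $F_i$ and $E_i$ on $L_{\theta(w)}$ via Lusztig's grassmannian picture (equivalently, via the Nakajima correspondence $\mathfrak{F}(\tilde u, \theta(w); i)$). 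Functoriality of the operations $(-)_!$ and $(-)^*$ on constructible functions applied to the commuting square of homeomorphisms then gives $E_i \circ T = T \circ \tilde F_i = T \circ \zeta(E_i)$ and $F_i \circ T = T \circ \tilde E_i = T \circ \zeta(F_i)$, completing the verification. The main obstacle is the careful bookkeeping of dimensions versus codimensions to confirm that the swap of legs in the correspondence exactly implements the exchange $E_i \leftrightarrow F_i$; once the identification of nested varieties is set up as above, this is a direct diagram chase.
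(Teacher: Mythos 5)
Your proposal is correct and takes essentially the same route as the paper: under the identification $q^w \cong p^{\theta(w)}$ you lift Proposition~\ref{prop:S=tS} to the nested varieties, observe that dimension becomes codimension so the two projections of the correspondence are interchanged, and conclude $E_i \leftrightarrow F_i$ by pushing the operators through the resulting commutative diagram of homeomorphisms --- precisely the diagram argument in the paper's proof. The only cosmetic difference is that you verify the Cartan generators $H_i$ by an explicit weight computation, whereas the paper handles that part implicitly via the grading and the matching of the extremal (constant) functions.
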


\begin{proof}
For $u,u' \in \N Q_0$, define
\[
  \tilde \Gr_\cP(u,u',p^{\theta(w)}) = \{(U,U') \in \tilde \Gr_\cP (u,p^{\theta(w)})
  \times \tilde \Gr_\cP (u',p^{\theta(w)})\ |\ U' \subseteq U\}.
\]
The map $\psi_{\theta(w)}$ induces a isomorphism
\[
  \tilde \Gr_\cP(u,u',p^{\theta(w)}) \xrightarrow{\cong}
  \mathfrak{F}(u,\theta(w);u-u')
\]
for all $u, u' \in \N Q_0$ and we will also denote this collection
of isomorphisms by $\psi_{\theta(w)}$. Then we have the following
commutative diagram.
\[
  \mbox{} \hspace{-1.5cm}
  \xymatrix{
    \mathfrak{L}(u-i,w) & \mathfrak{F}(u,w;i) \ar[l]_{\pi_1}
    \ar[r]^{\pi_2} &
    \mathfrak{L}(u,w) \\
    \Gr_\cP(u-i,q^w) \ar[d]^{\cong} \ar[u]^{\phi_w}_{\cong} & \Gr_\cP(u-i,u,q^w)
    \ar[l]_{\pi_1}
    \ar[r]^{\pi_2} \ar[d]^{\cong} \ar[u]^{\phi_w}_{\cong} & \Gr_\cP(u,q^w)
    \ar[d]^{\cong} \ar[u]^{\phi_w}_{\cong} \\
    \tilde \Gr_\cP((\sigma_0 \cdot_w 0)-(u-i), p^{\theta(w)})
    \ar[d]_{\psi_{\theta(w)}}^{\cong} &
    \tilde \Gr_\cP((\sigma_0 \cdot_w 0)-u, (\sigma_0 \cdot_w 0)-(u-i), p^{\theta(w)})
    \ar[l]_<(0.1){\pi_2} \ar[r]^<(0.15){\pi_1} \ar[d]_{\psi_{\theta(w)}}^{\cong}
    & \tilde \Gr_\cP((\sigma_0 \cdot_w 0) - u, p^{\theta(w)})
    \ar[d]_{\psi_{\theta(w)}}^{\cong} \\
    \mathfrak{L}((\sigma_0 \cdot_w 0)-(u-i),\theta(w)) &
    \mathfrak{F}((\sigma_0 \cdot_w 0)-u,\theta(w);i) \ar[l]_{\pi_2} \ar[r]^{\pi_1} &
    \mathfrak{L}((\sigma_0 \cdot_w 0)-u,\theta(w))
  }
\]
It follows that for $f \in \bigoplus_u M(\mathfrak{L}(u,w))$, we
have
\begin{gather*}
  E_i \circ (\psi_{\theta(w)}
  \circ \phi_w^{-1})^* (f) = (\psi_{\theta(w)}
  \circ \phi_w^{-1})^* \circ F_i (f),\\
  F_i \circ (\psi_{\theta(w)} \circ \phi_w^{-1})^* (f) =
  (\psi_{\theta(w)}
  \circ \phi_w^{-1})^* \circ E_i (f).
\end{gather*}
Furthermore, $(\psi_{\theta(w)} \circ \phi_w^{-1})^*$ maps the
constant function on $\mathfrak{L}(0,w)$ with value one to the
constant function on $\mathfrak{L}(\sigma_0 \cdot_w 0,\theta(w))$
with value one. The result follows.
\end{proof}

\begin{Remark}
  Note that the middle isomorphism in \eqref{eq:Chevalley-isomorphisms}
  depends on our identification of $q^w$ and
  $p^{\theta(w)}$. The isomorphism $\phi_w(u)$ also depends on our
  fixed retract $\pi : q^w \to s^w$.  By Proposition~\ref{prop:unique-injection},
  all such choices are related
  by the natural action of $\Aut_\cP q^w$ (see Definition~\ref{def:Aut-action-on-QG})
  A similar group action
  appears in the identification of $\tilde \Gr_\cP((\sigma_0 \cdot_w 0)
  - u, p^{\theta(w)})$ with $\mathfrak{L}((\sigma_0 \cdot_w
  0)-u,\theta(w))$ (see \cite{Lus00b}).   Via
  the isomorphisms $\phi_w(u)$, the group
  $\Aut_\cP q^w$ acts on the space of constructible functions on
  $\bigsqcup_v \mathfrak{L}(v,w)$ and
  $L_w$ is a subspace of the space of invariant functions.  The pullback
  $(\psi_{\theta(w)} \circ \phi_w^{-1})^*$ acting on the space of
  invariant functions is independent of the choice of $\pi$ and the
  chosen identification of $q^w$ with $p^{\theta(w)}$.
\end{Remark}

\appendix

\section{Isomorphisms of varieties}  \label{Shipman-Appendix}

After an earlier version \cite{ST09} of the current paper was
released, it was proven in \cite{Shi10} that the grassmannian type
varieties $\tilde \Gr_\cP(v,p^w)$ defined by Lusztig are indeed
isomorphic as algebraic varieties to the lagrangian Nakajima quiver
varieties $\mathfrak {L}(v,w)$. A simple ``duality" map gives an
isomorphism of varieties between the quiver grassmannian
${\Gr}_\cP(v,\tilde q^w)$ and $\tilde \Gr_\cP(v,p^w)$. The purpose of this
appendix is to describe this map precisely, and from there to
conclude that the map from $\NGr_\cP(v,q^w) \simeq \Gr_\cP(v,\tilde q^w)$ to $\mathfrak{L}(v,w)$
constructed in Theorem \ref{thm:QG=QV} is in fact an isomorphism of
algebraic varieties. An alternative approach (not pursued here)
would be an injective version of the argument of \cite{Shi10} that
would directly show that $\Gr_\cP(v,\tilde q^w)$ is isomorphic to
$\mathfrak{L}(v,w)$.

Let $i \in Q_0$ and fix a non-degenerate bilinear pairing
\[
  \langle \cdot, \cdot \rangle_{s^i} : s^i \times s^i \to \C,
\]
and a retract $\pi : q^i \to s^i$ of $\cP_0$-modules.  For a path
$\beta = a_1 \cdots a_n$ in the double quiver $\tilde Q$, let
\begin{equation} \label{eq:alg-anti-involution}
  \beta^\vee = \bar a_n \cdots \bar a_1
\end{equation}
be the reverse path.  Extending by linearity, this defines an
algebra anti-involution of $\C \tilde Q$ that induces an algebra
anti-involution of $\cP$. Then define a bilinear pairing
\begin{equation} \label{eq:pairing-i}
  \langle \cdot, \cdot \rangle : {\tilde q}^i \times p^i \to \C, \quad
  \langle v, \beta e_i \rangle = \langle \pi(\beta^\vee v), e_i \rangle_{s^i}.
\end{equation}

For $n \ge 0$, let
\begin{align*}
  p^i_n &= \mathcal{P}_{\ge n} e_i \subseteq p^i, \\
  q^i_n &= \{v \in q^i\ |\ \cP_n \cdot v = 0\} = \{v \in {\tilde
  q}^i\ |\ \cP_n \cdot v = 0 \},
\end{align*}
where the last equality holds since ${\tilde q}^i$ contains all
nilpotent elements of $q^i$ by
Lemma~\ref{lem:loc-nil-contained-in-tqw}.  Note that each $q^i_n$ is
finite-dimensional. We have the obvious inclusions
\[
  q^i_0 \subseteq q^i_1 \subseteq q^i_2 \dots,
\]
and it follows from Lemma~\ref{lem:loc-nil-contained-in-tqw} and
Theorem~\ref{thm:ln-injective-hull} that $\tilde q^i =
\bigcup_{n=0}^\infty q^i_n$.  It is clear from the definitions that
\[
  \langle q^i_n, p^i_{n+1} \rangle = 0, \quad \text{for all } n \ge
  0.
\]
Thus we have the induced bilinear pairing on $q^i_n \times
(p^i/p^i_{n+1})$.

\begin{Lemma} \label{lem:pairing}
The pairing
\[
  \langle \cdot, \cdot \rangle : q^i_n \times (p^i/p^i_{n+1})
  \to \C
\]
is non-degenerate.
\end{Lemma}

\begin{proof}
Since $q^i_n$ is nilpotent of degree $n$ and has socle $s^i$, for
all nonzero $v \in q^i_n$, there exists $\beta \in \cP_{\le n}$ such
that $0 \ne \beta \cdot v \in s^i$.  Then $\langle v, \beta^\vee e_i
\rangle \ne 0$. Thus, it suffices to show that $\dim (p^i/p^i_{n+1})
\le \dim q^i_n$.  Now, $(p^i/p^i_{n+1})^*$ is naturally a right
$\cP$-module. Via the anti-involution
\eqref{eq:alg-anti-involution}, this becomes a nilpotent left
$\cP$-module with socle $s^i$.  Therefore, by
Proposition~\ref{prop:unique-injection}, $(p^i/p^i_{n+1})^*$ injects
into $\tilde q^i$.  It is clear that the image of this injection is
contained in $q^i_n$ and thus the result follows since $q^i_n$ is
finite-dimensional.
\end{proof}

We then have the following corollary, whose proof is immediate.

\begin{Corollary}
The pairing \eqref{eq:pairing-i} is non-degenerate.  Furthermore,
\[
  \tilde q^i \cong \{f \in \Hom_\C (p^i,\C)\ |\ f|_{p^i_n} = 0
  \text{ for } n \gg 0\}
\]
as $\cP$-modules, where the $\cP$-module structure on the right hand
side is given by
\[
  (\beta \cdot f')(v) = f'(\beta^\vee \cdot v), \quad \beta \in \cP,\
  v \in p^i,\ f' \in \{f \in \Hom_\C (p^i,\C)\ |\ f|_{p^i_n} = 0
  \text{ for } n \gg 0\}.
\]
\end{Corollary}

\begin{Remark}
One should compare this result to
Definition~\ref{def:finite-type-injectives} and
Lemma~\ref{lem:injectives} in finite type.
\end{Remark}

Recall that, for $w = \sum_i w_i i \in \N Q_0$, we have
\[
  s^w = \bigoplus_i (s^i)^{\oplus w_i},\quad p^w = \bigoplus_i
  (p^i)^{\oplus w_i},\quad \tilde q^w = \bigoplus_i (\tilde q^i)^{\oplus w_i}.
\]
By declaring distinct summands to be orthogonal, we have a
non-degenerate bilinear pairing
\begin{equation} \label{eq:pairing}
  \langle \cdot, \cdot \rangle : \tilde q^w \times p^w \to \C.
\end{equation}
For a subspace $U$ of $\tilde q^w$, define the subspace
\[
  U^\perp = \{v \in p^w\ |\ \langle v',v \rangle = 0 \text{ for all
  } v' \in U\}
\]
of $p^w$.  Similarly, for a subspace $U$ of $p^w$, define the
subspace $U^\perp$ of $\tilde q^w$.

\begin{Proposition} \label{prop:qg-equality}
For $U \in \Gr_\cP(v,\tilde q^w)$, we have $U^\perp \in \tilde
\Gr_\cP(v,p^w)$, and the map
\[
  \Gr_\cP(v,\tilde q^w) \to \tilde \Gr_\cP(v,p^w),\quad U \mapsto
  U^\perp,
\]
is an isomorphism of algebraic varieties.
\end{Proposition}

\begin{proof}
It follows from the definition of the pairing \eqref{eq:pairing}
that $U$ is a submodule of $\tilde q^w$ if and only if $U^\perp$ is
a submodule of $p^w$.  Also, note that $U \subseteq \tilde q^w$ is
finite-dimensional if and only if $U \subseteq q^w_n$ for some $n$.
Therefore, it follows from Lemma~\ref{lem:pairing} that the maps $U
\mapsto U^\perp$ (in either direction) are mutually inverse
bijections between $\Gr_\cP(v,\tilde q^w)$ and $\tilde
\Gr_\cP(v,p^w)$.  Since these maps are clearly algebraic, the result
follows.
\end{proof}

\begin{Theorem} \label{thm:QG=QV-varieties}
The quiver grassmannian $\Gr_\cP(v,\tilde q^w)$ is isomorphic to the
lagrangian Nakajima quiver variety $\mathfrak{L}(v,w)$ as an
algebraic variety.
\end{Theorem}

\begin{proof}
This follows from the isomorphisms of algebraic varieties
\[
  \Gr_\cP(v,\tilde q^w) \cong \tilde \Gr_\cP(v,p^w)
  \cong \mathfrak{L}(v,w).
\]
The
first isomorphism is Proposition~\ref{prop:qg-equality} and the
second is Proposition~\ref{prop:QGproj=QV}.
\end{proof}

\begin{Corollary} \label{cor:QG=QV-variety-map}
The map $\bar \iota: \Gr_\cP(v,\tilde q^w) \to \mathfrak{L}(v,w)$ of
Theorem~4.4 is an isomorphism of algebraic varieties.
\end{Corollary}

\begin{proof}
By Theorem~\ref{thm:QG=QV-varieties}, we know that $\Gr_\cP(v,\tilde q^w)$
and $\mathfrak{L}(v,w)$ are isomorphic as algebraic varieties. Since
$\bar \iota$ is a bijective algebraic map by
Theorem~\ref{thm:QG=QV}, the result follows by \cite[Lemma~1]{Kal05}
(while the result there is stated for irreducible varieties, the
proof applies to reducible ones -- the only difference is that the
normalization is now a disjoint union of components).
\end{proof}

\section{Erratum} \label{sec:erratum}

The original published version of this paper contained the following errors.  We thank Sarah Scherotzke for bringing this to our attention.

\begin{Error} \label{e1} If $\mathfrak{g}$ is not of finite or affine type, then the Nakajima quiver variety $\Lambda(v,w)$ is not actually isomorphic to the variety $\Gr(v, q^w)$ of all $v$-dimensional subrepresentations of the injective module $q^w$. In fact, beyond affine type, $\Gr(v, q^w)$ does not have a natural variety structure, or, at least, is not finite dimensional. This is because there are continuous families of non-isomorphic modules, all of which have a nontrivial extension with some one-dimensional simple module $S_i$.

There are two ways to modify the statement to make it true, and, with either of these modifications, the work in the original paper does prove the correct result. One must replace $\Gr(v, q^w)$ with either the variety $\NGr(v, q^w)$ of nilpotent $v$-dimensional subrepresentations of $q^w$, or with the variety  $\Gr(v, \tilde q^w)$ of all $v$-dimensional subrepresentations, but where the injective hull $q^w$ in the category of all representations of the preprojective algebra has been replaced with the injective hull $\tilde q^w$ in the category of locally nilpotent representations. Our work shows that these are naturally isomorphic, and are also isomorphic to $\Lambda(v,w)$.
\end{Error}

\begin{Error} Lemma 2.9 (which essentially asserted that $\Gr(v, q^w)$ and $\NGr(v, q^w)$ were isomorphic) is false beyond affine type, and should be removed. The proof is simply incorrect. In fact, this caused most of the issues in \ref{e1}.
\end{Error}

\section*{Acknowledgements}

The authors would like to thank
B.~Leclerc who, after hearing some of the preliminary results of the
current paper, suggested extending these results to graded/cyclic
versions. They are also grateful to W.~Crawley-Boevey for many
helpful discussions and for suggesting the proof of
Proposition~\ref{prop:injective-ln-condition}. Furthermore, they
would like to thank P.~Etingof, A. Hubery, H.~Nakajima, M.~Roth, O.~Schiffmann, and
I.~Shipman for useful conversations and S.-J.~Kang,
Y.-T.~Oh, and the Korean Mathematical Society for the invitation to
participate in the 2008 Global KMS International Conference in Jeju,
Korea where the ideas in the current paper were originally
developed.


\bibliographystyle{abbrv}
\bibliography{biblist}

\bigskip

\end{document}